\newcommand{\be}{{\bf{e}}}
\newcommand{\lN}{{\mathcal{N}}}
\newcommand{\lE}{{\mathcal{E}}}
\newcommand{\spt}{~{\mbox{spt}}}
\newcommand{\Q}{\mathbb Q}
\newcommand{\C}{{{\mathbb C}}}
\newcommand{\R}{{{\mathbb R}}}
\newcommand{\Z}{{{\mathbb Z}}}
\newcommand{\T}{{{\mathbb T}}}
\def\bigO{{\mathcal{O}}}
\newcommand\parent{{\operatorname{parent}}}
\newcommand\sibling{{\operatorname{sibling}}}
\newcommand\spouse{{\operatorname{spouse}}}
\newcommand\child{{\operatorname{child}}}
\def\be#1{\begin{equation} \label{#1}}
\def\bi{\begin{itemize}}
\def\bs{\begin{split}}
\def\es{\end{split}}
\def\ba{\begin{align}}
\def\bas{\begin{align*}}
\def\ea{\end{align}}
\def\eas{\end{align*}}
\def\covers{{\twoheadrightarrow}}
\def\Im{{\hbox{Im}}}
\def\Re{{\hbox{Re}}}
\def\C{{\hbox{\mathbb C}}}
\def\R{{\mathbb R}}
\def\rad{{\mathcal{R}}} %represents inner radius of the frequency set \lambda
\def\Z{{\mathbb Z}}
\def\T{{{\mathbb T}}}
\def\C{{{\mathbb C}}}
\def\eps{\varepsilon}
\def\id{{\operatorname{id}}}
\newtheorem{theorem}{Theorem}
\theoremstyle{definition}
\newtheorem{definition}{Definition}
\theoremstyle{remark}
\newtheorem{remark}{Remark}
\theoremstyle{proposition}
\newtheorem{proposition}{Proposition}
\theoremstyle{lemma}
\newtheorem{lemma}{Lemma}
\theoremstyle{corollary}
\newtheorem{corollary}{Corollary}
\numberwithin{equation}{section}
\numberwithin{lemma}{section}
\numberwithin{remark}{section}
\numberwithin{theorem}{section}
\numberwithin{corollary}{section}
\numberwithin{proposition}{section}
\numberwithin{definition}{section}
\begin{document}

\title[Weakly  turbulent solutions  for NLS]{Weakly turbulent solutions
for the cubic defocusing
  nonlinear Schr\"odinger equation}

%\author{J. Colliander, M. Keel, G. Staffilani, H. Takaoka, T. Tao}

\vspace{-0.3in}

\author{J. Colliander}
\thanks{J.C. is supported in part by N.S.E.R.C. grant RGPIN 250233-07.}
\address{\small University of Toronto}

\author{M. Keel}
\thanks{M.K. is supported in part by the Sloan Foundation and N.S.F. Grant DMS0602792.}
\address{\small University of Minnesota, Minneapolis}

\author{G. Staffilani}
\thanks{G.S. is supported in part by N.S.F. Grant
DMS 0602678.}
\address{\small Massachusetts Institute of Technology}

\author{H. Takaoka}
\address{\small Kobe University}
\thanks{H.T. is supported in part by J.S.P.S. Grant No. 13740087}

\author{T. Tao}
\thanks{T.T. is supported by NSF grant DMS-0649473 and a grant from the
Macarthur Foundation.}
\address{\small University of California, Los Angeles}

\date{12 August 2008}

\subjclass{35Q55}
\keywords{nonlinear Schr\"odinger equation, well-posedness, weak turbulence, forward cascade, Arnold Diffusion}

\begin{abstract}

We consider the cubic defocusing nonlinear Schr\"odinger equation on the
two dimensional torus.  We exhibit smooth solutions for which
the  support of the conserved energy moves to higher Fourier modes.
This weakly turbulent behavior is quantified by the growth of
higher Sobolev norms:  given any $\delta \ll 1, K \gg 1, s >1$, we construct
smooth initial data $u_0$ with $\|u_0\|_{{H}^s} < \delta$, so that
the corresponding time evolution $u$ satisfies $\|u(T)\|_{{H}^s} > K$
at some time $T$.  This growth occurs despite the Hamiltonian's bound on $\|u(t)\|_{\dot{H}^1}$ and
despite the conservation of the quantity $\|u(t)\|_{L^2}$.

The proof contains two arguments which may be of interest beyond
the particular result described above.  The first is a
construction of the solution's frequency support that
simplifies the system of ODE's describing each Fourier mode's evolution.
The second is a construction of solutions to these simpler systems of
ODE's which begin near one invariant manifold and ricochet from arbitrarily small
neighborhoods of an arbitrarily large number of other invariant manifolds.
The techniques used here are related to but are distinct
from those traditionally used
to prove Arnold Diffusion in perturbations of Hamiltonian systems.

%We construct a new global-in-time smooth solution of the cubic defocusing
%nonlinear Schr\"odinger equation on the two dimensional torus.
%The solution displays an arbitrary increment in any fixed high Sobolev
%norm. The equation is recast as an infinite dimensional system of
%ordinary differential equations in the Fourier coefficients. A finite
%dimensional resonant truncation of the infinite dimensional system is
%shown to have solutions which grow large in high Sobolev norms. A
%perturbation result shows these solutions of the resonant truncation
%are good approximations to solutions of the full cubic nonlinear Schr\"odinger
%equation.

\end{abstract}

\maketitle

\tableofcontents

\section{Introduction}

% This should include a comparison of the results obtained with known
% KAM and Nekoroshev stability results.

We consider  the periodic defocusing cubic nonlinear Schr\"odinger
(NLS) equation
\begin{equation}\label{nls}
\left\{
\begin{matrix}
-i \partial_t u + \Delta u  = |u|^2 u \\
u(0,x)  := u_0(x) \\
\end{matrix}
\right.
\end{equation}
where $u(t,x)$ is a complex valued function with the spatial variable $x$ lying in the torus $\T^2 := \R^2 / (2\pi \Z)^2$.
Equations such as \eqref{nls} arise as models in various  physical settings, including the description of the envelope
of a general dispersive wave in a weakly nonlinear medium, and more specifically in some models of surface water waves.
(See e.g. the survey in \cite{sulemsulem}, Chapter 1.)

We shall always take the initial data $u_0(x)$ to be smooth.
%In this
%paper, we construct a {\it weakly turbulent} solution.
Recall (see e.g. \cite{cazenave, taotext}) that smooth solutions to \eqref{nls} exhibit both conservation of the Hamiltonian,
\begin{align}
E[u](t) & := \int_{\T^2} \frac{1}{2} |\nabla u|^2 + \frac{1}{4} |u|^4 dx (t) \nonumber \\
  & = E[u](0) \label{energyconservation},
  \end{align}
and conservation of mass, or $L^2(\T^2)$ norm,
\begin{align}
\int_{\T^2} |u|^2 dx (t) & = \int_{\T^2} |u|^2 dx (0) \label{massconservation},
\end{align}
for all $t > 0$.  The local-in-time well-posededness result of Bourgain \cite{BourgainXsb}
for data $u_0 \in H^s(\T^2), s > 0$, together with these conservation
laws, immediately gives the existence of a global smooth solution to
\eqref{nls} from smooth initial data.

We are interested in whether there
exist solutions of \eqref{nls} which initially oscillate only on
scales comparable to the spatial period
and eventually oscillate on arbitrarily short spatial
scales. One can quantify
such motion in terms of the growth in time of higher Sobolev norms
$\|u(t) \|_{H^s(\T^2)}, s > 1$,  defined using the Fourier transform
by,
\begin{align}
\|u(t)\|_{H^s(\T^2)} & :=\|u(t, \cdot)\|_{H^s(\T^2)} \; := \;
\left( \sum_{n \in \Z^2} \langle n \rangle^s |\hat u(t,n)| \right)^{\frac{1}{2}}  \label{hsnorm}
\end{align}
where $\langle n \rangle := (1 + |n|^2)^\frac{1}{2}$ 
and\footnote{In what follows, we omit the factors of $2 \pi$ arising in definitions of  
 the Fourier transform and its inverse, as these play no role in our analysis.},
\begin{align*}
\hat u(t,n) & := \int_{\T^2} u(t,x) e^{-in \cdot x} dx.
\end{align*}
For example, together  \eqref{energyconservation} and \eqref{massconservation} give a uniform
bound
\begin{align}
\label{h1bound}
\|u(t)\|_{H^1(\T^2)} & =  \left( \sum_{n \in \Z^2} \langle n \rangle^2 |\hat u(t,n)|^2 \right)^{\frac{1}{2}}
\; \leq \; C
\end{align}
 on the solution's $H^1$ norm.
Hence, for fixed $s > 1$, $\|u(t)\|_{H^s(\T^2)}$ could grow in time if the terms contributing
substantially to the sum on the left hand side of \eqref{h1bound} correspond,
loosely speaking, to ever higher $|n|$.    From this point of view then, we are interested
in whether the energy of a solution to \eqref{nls} can be carried by higher and higher Fourier modes.

The one space-dimensional analog of \eqref{nls} is completely integrable \cite{ZakharovShabat}, and
the higher conservations laws in that case imply $\|u(t)\|_{H^s(\T^1)} \leq C(\|u(0)\|_{H^s(\T^1)}), s \geq  1$ for all $t > 0$.
It is unknown (see e.g. \cite{Bourgain_GAFA_Survey}) whether unbounded growth in $H^s$,  $s>1$, is possible  in dimensions 2 and higher, let
alone whether such growth is generic.  The
main result of this paper is the construction of solutions to \eqref{nls} with arbitrarily large growth in
higher Sobolev norms,

\begin{theorem}
%(Weak Turbulence for NLS)
\label{main}  Let $1 < s $, $K \gg 1$, and $0 < \delta \ll 1$ be given parameters.  Then there
exists a global smooth solution $u(t,x)$ to \eqref{nls} and a time $T  > 0$ with
$$ \| u(0) \|_{H^s} \leq \delta$$
and
$$ \| u(T) \|_{ H^s} \geq K .$$
\end{theorem}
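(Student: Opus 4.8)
The plan is to trade \eqref{nls} for a large but finite system of ODE's and then build the required solution of that system essentially by hand.

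\emph{Step 1: passage to the resonant system.} Removing the linear flow by writing $u(t,x)=\sum_{n\in\Z^2}a_n(t)\,e^{i(n\cdot x+|n|^2t)}$, the Fourier coefficients satisfy
\[
-i\dot a_n=\sum_{n_1-n_2+n_3=n}a_{n_1}\overline{a_{n_2}}a_{n_3}\,e^{i\omega t},\qquad \omega=|n_1|^2-|n_2|^2+|n_3|^2-|n|^2 .
\]
Keeping only the resonant ($\omega=0$) terms gives the resonant system FNLS, whose nonlinearity involves only quadruples with $n_1-n_2+n_3=n$ and $\omega=0$, i.e. with $\{n_1,n_3\}$ and $\{n_2,n\}$ pairs of antipodal vertices of a (possibly degenerate) rectangle in $\Z^2$. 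I would first prove an approximation lemma: a solution of FNLS on $[0,T]$ with $\ell^1$-bounded data is shadowed in $\ell^1$ by the true NLS solution with the same data, with error controlled by $T$ times the reciprocal of the smallest nonzero $|\omega|$ relevant to the dynamics; this rests on Bourgain's $X^{s,b}$ theory and a frequency-localized Duhamel iteration. Everything below is arranged so that this lemma applies on a long enough interval.

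\emph{Step 2: choosing the frequency support (first main construction), and reduction to a toy model.} I would build a finite set $\Lambda=\Lambda_1\sqcup\cdots\sqcup\Lambda_N\subset\Z^2$ with $|\Lambda_j|=2^{N-1}$ that is ``closed under resonant rectangles'': each $n\in\Lambda_j$ (for $j\le N-1$) lies in exactly one rectangle whose remaining vertices are a sibling $n^{*}\in\Lambda_j$ and two parents in $\Lambda_{j+1}$, these nuclear families account for all rectangles with four vertices in $\Lambda$, and no degenerate rectangle occurs within $\Lambda$. I also need all other additive quadruples drawn from $\Lambda$ to have $|\omega|$ bounded below, and the generations to be spread out geometrically in magnitude. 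I expect to produce such a $\Lambda$ by a recursive ``build the parents from the children'' construction on circles (using that suitably placed concyclic points form rectangles), followed by a generic rotation to kill accidental coincidences and a dilation by a large integer $R$ — which both embeds $\Lambda$ in $\Z^2$ and multiplies every $\omega$ by $R^2$, making the off-family interactions as non-resonant as Step 1 demands. Because within $\Lambda$ the only resonant interactions are inside families, FNLS-data that is supported on $\Lambda$ and constant on each generation, $a_n=b_j$ for $n\in\Lambda_j$, stays so, and the $b_j$ obey the toy model
\[
-i\dot b_j=-|b_j|^2b_j+2\,\overline{b_j}\bigl(b_{j-1}^2+b_{j+1}^2\bigr),\qquad 1\le j\le N,\quad b_0:=b_{N+1}:=0
\]
(up to absolute constants), which conserves $\sum_j|b_j|^2$ and has as invariant submanifolds the ``pure states'' $\{b_k=0:k\ne j\}$ (on which $b$ just rotates in phase) and the two-mode planes $\{b_k=0:k\ne j,j+1\}$. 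This combinatorial/number-theoretic construction of $\Lambda$ is one of the two delicate parts of the proof.

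\emph{Step 3: the ricochet (second main construction).} I want a trajectory $b(t)$ and times $t_3<\cdots<t_{N-2}$ with $b(t_j)$ within $\epsilon$ of a phase rotation of the pure state $e_j$, so that the conserved mass moves from being essentially all on generation $3$ to being essentially all on generation $N-2$. The skeleton is the two-mode subsystem in $(b_j,b_{j+1})$, which is completely integrable and possesses an explicit ``slider'' orbit carrying the mass from a neighborhood of $e_j$ to a neighborhood of $e_{j+1}$ in finite time; I would concatenate $N-5$ sliders, using a perturbative shooting/covering argument to absorb the coupling to the inactive modes and the mismatch between consecutive sliders, steering through the saddle-type behavior one finds by linearizing near each pure state. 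The crux, and the step I expect to be hardest, is to run this with constants uniform in $N$, so that after all $N$ ricochets the inactive modes and the accumulated timing/phase errors are still $\ll1$; this requires careful quantitative control of the local hyperbolic structure and, presumably, an inductively shrinking family of target neighborhoods.

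\emph{Step 4: conclusion.} Given $s,K,\delta$, choose $N$ so large that the geometric $H^s$-gain from generation $3$ to generation $N-2$ (a fixed factor $>1$ per generation) exceeds $K/\delta$ with room for the loss in Step 1. Apply the scaling symmetry $b(t)\mapsto\mu b(\mu^2t)$ of the toy model to normalize the resulting FNLS-on-$\Lambda$ solution $v$ so that $\|v(0)\|_{H^s}=\delta$; then $\|v(t_{N-2})\|_{H^s}>K$ — there is no conflict with \eqref{energyconservation}, \eqref{massconservation}, since the large $L^4$ energy stored in the spread-out, low-frequency initial bump is precisely what makes room for such growth. Finally, with $R$ chosen large enough that the non-resonant phases oscillate rapidly on $[0,t_{N-2}]$, invoke the approximation lemma of Step 1 to obtain a genuine global smooth solution $u$ of \eqref{nls} that stays $\ell^1$-close to $v$ throughout, hence obeys $\|u(0)\|_{H^s}\le\delta$ and $\|u(T)\|_{H^s}\ge K$ with $T=t_{N-2}$.
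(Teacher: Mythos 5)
Your proposal follows essentially the same route as the paper: the gauge/Fourier reduction to $\mathcal{F}NLS$, a resonant truncation with data supported on a multi-generation, family-structured frequency set $\Lambda$ enjoying norm explosion, collapse to the toy model on generations, diffusion in the toy model by concatenating slider orbits through a perturbative shooting/covering scheme with shrinking targets, and a final scaling-plus-approximation step. The deviations are cosmetic and do not change the architecture (e.g.\ you gain non-resonance by dilating $\Lambda$ and invoke $X^{s,b}$ machinery, whereas the paper's Lemma \ref{approx} is an elementary $l^1$ Gronwall/integration-by-parts argument using only the small rescaled amplitude and $|\omega_4| \geq 1$, and the toy-model constants need only depend on $N$, so no uniformity in $N$ is required).
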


 Note that, in view of
\eqref{energyconservation}, \eqref{massconservation}, the growth
constructed here must involve both
movement of energy to higher frequencies, and movement of mass to
lower frequencies.  (The mass associated to the higher and higher frequency energy must
be decreasing by energy conservation.  This must be balanced, by mass conservation,
by more and more mass at low frequencies.)  Recall again that any smooth data in \eqref{nls} evolves
globally in time.  While finite and infinite time blowup results are known for focusing analogs of
\eqref{nls} (e.g. \cite{BourgainSmackDown},
\cite{BourgainBook}, \cite{kavian}, \cite{MR5}, 
\cite{Planchon-Raphael}), the mechanism responsible for the $H^s$ norm growth in Theorem \ref{main}
is distinct from these blowup dynamics.

Using the conservation laws
  \eqref{energyconservation}, \eqref{massconservation} and the
  Sobolev embedding theorem, we observe
  that, for $s = 1$, we have a stability property near zero,
$$\left( \limsup_{|t| \rightarrow \infty}
    \left[ \sup_{\|u_0\|_{H^1} \leq \delta} \| u(t) \|_{H^1} \right]
  \right) \leq C \delta.
$$
Theorem  \ref{main} implies a different behavior in the range $s>1$.
Since $\delta$ may be
chosen to be arbitrarily small and $K$ may be chosen arbitrarily
large in Theorem \ref{main}, we observe the following:

\begin{corollary}[$H^s$ instability of zero solution]
  \label{zerounstable}
The global-in-time solution map taking the initial data $u_0$ to the
associated solution $u$ of \eqref{nls} is strongly unstable in $H^s$
near zero for all $s>1$:
\begin{equation}
  \label{zerounstableequation}
  \inf_{\delta >0} \left( \limsup_{|t| \rightarrow \infty}
    \left[ \sup_{\|u_0\|_{H^s} \leq \delta} \| u(t) \|_{H^s} \right]
  \right) = \infty.
\end{equation}
\end{corollary}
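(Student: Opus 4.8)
The plan is to derive Corollary~\ref{zerounstable} as an essentially immediate consequence of Theorem~\ref{main}, by quantifying over the free parameters $\delta$ and $K$. Fix $s>1$. The inner supremum in \eqref{zerounstableequation}, for a given $\delta>0$, is taken over all data $u_0$ with $\|u_0\|_{H^s}\le\delta$ and over the resulting solutions; I claim it is $+\infty$ for every $\delta>0$. Indeed, let $\delta>0$ be arbitrary and let $K\gg1$ be arbitrary. Applying Theorem~\ref{main} with these values of $s$, $K$, and $\delta$ (shrinking $\delta$ if necessary so that it falls in the regime $0<\delta\ll1$ required by the theorem — note shrinking only makes the constraint $\|u_0\|_{H^s}\le\delta$ more restrictive, hence the conclusion still bounds the original supremum from below) produces a global smooth solution $u$ and a time $T>0$ with $\|u(0)\|_{H^s}\le\delta$ and $\|u(T)\|_{H^s}\ge K$. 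Since this solution is admissible in the inner supremum, we get
\[
\sup_{\|u_0\|_{H^s}\le\delta}\ \sup_{t}\ \|u(t)\|_{H^s}\ \ge\ K.
\]

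Next I would upgrade the plain $\sup_t$ to the $\limsup_{|t|\to\infty}$ appearing in the statement. The point is that $H^s$ norm growth achieved at a single finite time $T$ can be made to persist, or at least recur, at arbitrarily large times. The cleanest way is to exploit the time-translation and time-reversal symmetries of \eqref{nls}: if $u(t,x)$ solves \eqref{nls} then so does $\overline{u(-t,x)}$ and so does $u(t-t_0,x)$ for any $t_0$. Given the solution from Theorem~\ref{main} attaining $\|u(T)\|_{H^s}\ge K$, one concatenates or time-shifts: for any target time scale $M$, the data $u_0$ evolved appropriately (or a time-translate thereof) still has small $H^s$ norm at its initial time and large $H^s$ norm at some time exceeding $M$. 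Alternatively, and more robustly, one simply observes that the quantity being bounded, $\sup_{\|u_0\|_{H^s}\le\delta}\|u(t)\|_{H^s}$, is for each fixed $t$ already infinite is not quite what we have; rather, for each large $t_\star$ one produces a fresh solution (via Theorem~\ref{main} with a time-translate) whose $H^s$ norm at time $t_\star$ is $\ge K$, so that $\limsup_{|t|\to\infty}\sup_{\|u_0\|_{H^s}\le\delta}\|u(t)\|_{H^s}\ge K$.

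Finally, since $K\gg1$ was arbitrary in the above, we conclude
\[
\limsup_{|t|\to\infty}\Big[\sup_{\|u_0\|_{H^s}\le\delta}\|u(t)\|_{H^s}\Big]=\infty
\]
for every $\delta>0$, and taking $\inf_{\delta>0}$ of a constant function equal to $\infty$ yields \eqref{zerounstableequation}. The main obstacle — really the only non-bookkeeping point — is the passage from "large $H^s$ norm at the single time $T$ furnished by Theorem~\ref{main}" to "large $H^s$ norm at arbitrarily late times," i.e. justifying the $\limsup_{|t|\to\infty}$; this is handled by the time-translation invariance of the equation together with the freedom to re-run Theorem~\ref{main} for each desired time window. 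Everything else is a matter of correctly ordering the quantifiers over $\delta$ and $K$. For this reason the proof of the corollary is genuinely short and I would present it in a few lines, invoking Theorem~\ref{main} and the symmetries of \eqref{nls}.
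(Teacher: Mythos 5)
The paper offers no detailed proof of Corollary \ref{zerounstable} (it is presented as an immediate observation from Theorem \ref{main}, with $\delta$ arbitrarily small and $K$ arbitrarily large), so the only question is whether your argument is actually complete. Your quantifier bookkeeping is fine, and you correctly isolate the one nontrivial point: Theorem \ref{main} produces a large $H^s$ norm at a single time $T$, whereas \eqref{zerounstableequation} requires large values of $\sup_{\|u_0\|_{H^s}\le\delta}\|u(t)\|_{H^s}$ at arbitrarily large $|t|$. But the mechanism you propose for this step does not work as stated. Time-translating the solution, $v(t):=u(t-t_0)$, changes the initial data to $u(-t_0)$ (or $u(t_1)$ for some other time), whose $H^s$ norm is not controlled by $\delta$, so the translate is not admissible in the inner supremum; time reversal $\overline{u(-t)}$ only reproduces the same growth at time $-T$, i.e. at the same value of $|t|$. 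And ``re-running Theorem \ref{main} for each desired time window'' is not licensed by the statement of the theorem: it asserts only the existence of \emph{some} $T>0$, with no lower bound on $T$ and no way to prescribe it, so citing the theorem alone does not produce growth at a time exceeding a given $M$.

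There are two standard ways to close this gap. Either invoke the proof (not just the statement) of Theorem \ref{main}: there $T=\lambda^2 T_0$ and the scaling parameter $\lambda$ (together with $\rad$, to keep \eqref{datasize}) may be taken arbitrarily large, so for fixed $\delta,K$ the growth time can be made as large as desired. Or argue softly from the statement alone: for data with $\|u_0\|_{H^s}\le\delta$ the conservation laws give a uniform $H^1$ bound, and the local theory then yields the exponential-in-time bound $\|u(t)\|_{H^s}\le \delta e^{Ct}$ uniformly over such data (this is the bound recalled in the introduction of the paper); hence any time $T$ at which $\|u(T)\|_{H^s}\ge K$ must satisfy $T\ge C^{-1}\log(K/\delta)$. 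Given $K_0$ and $R$, applying Theorem \ref{main} with $K=\max(K_0,\delta e^{CR})$ then forces $T\ge R$ and $\sup_{\|u_0\|_{H^s}\le\delta}\|u(T)\|_{H^s}\ge K_0$, which gives the $\limsup_{|t|\to\infty}$ statement and completes the corollary. Your write-up needs one of these inputs; as it stands, the passage from a single time $T$ to arbitrarily late times is asserted but not justified.
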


It does not follow from \eqref{zerounstableequation}, nor directly from Theorem
\ref{main}, that there exists initial data $u_0 \in H^s$ for some $s>1$
which evolves globally in time and satisfies $\limsup_{|t| \rightarrow
  \infty} \|u(t) \|_{H^s} = \infty$. As remarked above, this remains an interesting open
question \cite{Bourgain_GAFA_Survey}.  In Section \ref{appendix} below,  we do prove that
\eqref{nls} has no nontrivial solutions which scatter - i.e. which approach a solution to the
linear equation at time $t = \infty$.  Note  that Theorem \ref{main} and Corollary \ref{zerounstable} also hold true for higher dimensional
generalizations of (1.1) by
considering solutions which are only dependent upon two of the
spatial coordinates.

% Theorem \ref{main} implies a strong instability
% property of the \eqref{nls} flow in $H^s, s>1$ near the zero
% solution. Let $u$ denote the evolution of initial data $u_0$ under
% \eqref{nls}. We observe that
% \begin{equation}
%   \label{InstabilityNearZero}
% \inf_{\delta >0} \left[ \limsup_{t \rightarrow +\infty}
%  \left( \sup_{\|u_0\|_{H^s} \leq \delta}  \| u(t) \|_{H^s} \right)
% \right] = + \infty.
% \end{equation}
% In contrast, the mass and energy conservation laws implies stability in ${H}^1$
% $$
%  \limsup_{t \rightarrow +\infty}
% \sup_{\|u_0\|_{{{H}}^1} \leq \delta}  \| u(t) \|_{H^s}  \leq C \delta.
% $$
% It remains an open question whether \eqref{nls} has
% solutions with divergent high Sobolev norms, e.g. $\limsup_{|t| \rightarrow \infty } \| u(t) \|_{H^s} = \infty$.
%{\bf{(J: But see Section 8?)}}

%Theorem \ref{main} asserts the existence of a weakly turbulent solution of the equation \eqref{nls}.
%The phenomena of weak
%turbulence is expected \cite{BourgainBook}
%to occur in many defocusing nonlinear dispersive wave
%problems on bounded domains. A theory of weak turbulence (see
%\cite{ZakharovOptical} for discussion related to \eqref{nls} and
%references therein), mainly based on an analysis of associated kinetic
%equations and with physical, experimental and numerical validations,
%gives statistical predictions implying most solutions are weakly
%turbulent. The stability properties of the solution described in
%Theorem \ref{main} and other rigorous links with the theory of weak turbulence merit further investigation.

%\subsection{Earlier Results}

Theorem \ref{main} is motivated by a diverse body of literature which we briefly review now, including upper bounds on
the possible growth of Sobolev norms for \eqref{nls}, lower bounds for
related models, and extensive work on the so-called
\emph{weak-turbulence theory} of related wave models.

A  straightforward  iteration argument based on the local theory
\cite{BourgainXsb} shows that high Sobolev norms of solutions of
\eqref{nls} can grow no faster than exponential-in-time. Bourgain
used refinements \cite{bourg1} of the Strichartz
inequality  to prove  polynomial-in-time
upper bounds \cite{BourgainHighSobolev} on Sobolev norm growth.  These results were sharpened,
using a normal forms reduction, in \cite{Bourgainsharpestyet}. (See
also \cite{CDKS},  \cite{StaffilaniDuke}.)

Previous examples of growth in the higher Sobolev norms
of solutions to \eqref{nls} are, to the best of our knowledge, found only in the work of
Kuksin  \cite{KuksinOscillationsGAFA1997} (see also related work in \cite{KuksinCMP1996},
 \cite{KuksinTurbulenceGAFA1997}, \cite{Kuksinwave}, \cite{KuksinGAFA1999}),  
where the following small dispersion nonlinear Schr\"odinger equation is 
considered\footnote{In these papers, Kuksin
    considers a variety of deterministic and random nonlinear Schr\"odinger equations in addition
    to \eqref{NLSZeroDisp}.  In
    \cite{Kuksinwave} the analog of
    \eqref{NLSZeroDisp} for the second order wave equation is considered.  Note too that we write (1.7) with the 
    sign convention 
on the time variable as in \eqref{nls}, rather than the convention used in 
\cite{KuksinOscillationsGAFA1997}, but this makes no substantial difference in describing the results.},
\begin{equation}
  \label{NLSZeroDisp}
 - i \partial_t w + \delta \Delta w = |w|^2 w
\end{equation}
with odd, periodic boundary conditions, 
%and unit scale (with respect to $\delta$) initial data $w_0$.
where $\delta$ is taken to be a small
parameter. Kuksin shows, among other results and in various
formulations, that smooth norms of solutions of \eqref{NLSZeroDisp}
evolving from relatively generic data with unit $L^2$ norm
eventually grow larger than a negative power of $\delta$.
This result can be compared with Theorem \ref{main} as follows.
Suppose $w$ is a solution of \eqref{NLSZeroDisp}. The
rescaled function $u_\delta (t,x) = \delta^{-\frac{1}{2}} w( \delta^{-1} t, x)$
satisfies the PDE \eqref{nls}, the same equation as \eqref{NLSZeroDisp}
with $\delta =1$. Note that $\| D_x^s u_\delta (0, \cdot) \|_{L^2} =
\delta^{-\frac{1}{2}} \| D_x^s w(0, \cdot) \|_{L^2_x}$. Hence,
in the context of \eqref{nls}, the above described result in \cite{KuksinOscillationsGAFA1997}
addresses growth of solutions emerging from sufficiently large initial data, where the size of the data
depends on the amount of growth desired. In contrast, Theorem
\ref{main} concerns growth  from  small, or  order one,  data\footnote{Theorem \ref{main} is stated
for the case $\delta \ll 1$.  However as we hope is clear from the discussion in section
\ref{scalingargument} below, the construction allows any choice of $\delta > 0$.}
and its proof involves a strong interplay between the equation's dispersion and
nonlinearity.

In contrast to the question for the nonlinear problem \eqref{nls}, there are several
results on the growth of Sobolev norms in linear Schr\"odinger equations with potentials,
\begin{align}
\label{nlspotential}
- i \partial_t v + \Delta v + V(t,x) v & = 0,  \quad \quad x \in \T^2.
\end{align}
For example, for $V$ smooth in space, and random in time (resp.
smooth and periodic in $x$ and $t$), Bourgain proved that
generic data grows polynomially \cite{BourgainSmooth} (resp. logarithmically
\cite{BourgainQuasiPeriodic}) in time.

%For the cubic focusing or
%defocusing NLS problem{\footnote{The statement and proof in \cite{BourgainSmackDown} in fact
%    address some generalizations of the cubic NLS on $\R^3$.}
%on $\R^3$, Bourgain proved \cite{BourgainSmackDown} the boundedness of the $H^s_{loc}$ norm
%assuming an a priori $H^1$ upper bound. Since%
%no solutions with divergent high Sobolev norms have been constructed,
%there remains a wide gap in current knowledge about \eqref{nls} and
%related problems with polynomial nonlinearities.

Bourgain has constructed \cite{BourgainPeriodicKdV},
\cite{BourgainAspects} Hamiltonian PDEs with solutions with divergent
high Sobolev norms. These constructions are based on perturbation
arguments off linear equations with spectrally defined Laplacian and
also involve
somewhat artificial{\footnote{See Remark 2 on p. 303 of
    \cite{BourgainHighSobolev} for further discussion. 
%Note also the
%    similarity of
%    equations (4.17) in \cite{BourgainHighSobolev} and (2.1.1)
%    in \cite{BourgainWMWang} with the toy
%    model ODE \eqref{toymodel} used in our construction.
}} choices of
nonlinearities. In \cite{BourgainHighSobolev},
solutions with divergent Sobolev norms are constructed for a wave
equation with a natural cubic nonlinearity but still involving a
spectrally defined Laplacian.

Bourgain has also shown
\cite{BQuasiPeriodicAnnals}, \cite{BParkCity} that there is an abundance of
time quasi-periodic
solutions of \eqref{nls}.

As mentioned  at the outset, the growth of higher Sobolev norms is just
one way to quantify the diffusion of energy to higher and  higher modes - also called
a forward cascade, or a direct cascade.  Furthur motivation for Theorem \ref{main}, and indeed for much of
the work cited above, is the literature of analysis, physics, numerics,
heuristics and conjectures regarding this phenomenon in related wave models.   For example, since the early 1960's a 
so-called \emph{weak turbulence theory} (alternatively
\emph{wave turbulence theory})  has been developed which gives a statistical description of the
forward cascade in various ``weakly interacting'' dispersive wave models, mainly based on 
the analysis of associated kinetic equations for the evolution of the Fourier modes \cite{benney_newell, benney_saffman, hasselmann,  litvak,  
 zakharov_early}.   There are a great many subsequent  works 
both within and outside the framework of weak turbulence theory which address the passage of energy to higher modes in dispersive wave models.  
We are not prepared to give here a representative survey, but see   
\cite{ deville_et_al,  ZakharovOptical, kramer, majda_mclaughlin_tabak, newell_notes} and references therein for examples.

In broadest outline, the proof of Theorem \ref{main} proceeds by first
viewing \eqref{nls} as an infinite dimensional system of O.D.E.'s in
$\{a_n(t)\}_{n \in \Z^2}$, where $a_n(t)$ is closely related to the
Fourier mode $\hat{u}(t,n)$ of the solution.  We identify a related
system, which we call the {\emph{resonant system}}, that we use
as an approximation to the full system.  The goal then will be to build
a solution $\{r_n(t)\}_{n \in \Z^2}$ to the resonant system
which grows in time.  This is accomplished by choosing the initial
data $\{r_n(0)\}_{n \in \Z^2}$ to be supported on a certain frequency set
$\Lambda \subset \Z^2$ in such a way that the resonant system of O.D.E.'s
collapses to an even simpler, finite dimensional system that we call
{\emph{the Toy Model System}}, and whose solution we denote by
$b(t) \equiv \{b_1(t), b_2(t), \ldots , b_N(t)\}$.  Each variable $b_i(t)$
will represent how a certain subset of the $\{r_n(t)\}_{n \in \Z^2}$
evolves in time.  There are two independent but related
ingredients which complete the proof of the main Theorem.  First, we show
the existence of the frequency set $\Lambda$, which is defined in
terms of the desired Sobolev
norm growth and according to a wish-list of geometric and
combinatorial properties aimed at simplifying the resonant system. Second,
we show that the Toy Model System exhibits unstable orbits that travel
from an arbitrarily small neighborhood of one invariant manifold to near
a distant invariant manifold.  It is this instability which is
ultimately responsible for the support of the solution's energy moving to
higher frequencies.  Instabilities like this have been remarked on
at least as far back as \cite{poincare}, but have been studied with
increasing interest  since the paper of Arnold \cite{arnold}.
Our construction has similarities with
previous work on so-called ``Arnold Diffusion'' and more
general instabilities in Hamiltonian systems (see e.g.
\cite{alekseev, arnold, bertibolle,  bessiI, bessi_upper_bounds,
   bolotin_treschev, bourgainkaloshin, cheng_yan, contreras,
 craig_book,  dls00, dls03,  llava_memoirs, douady_first,
douady_le_calvez, fontich_martin, kaloshin_levi, mather_announce, moeckel_02,
 moeckel_expository,
treschevsolo}).  However, the particular approach taken here seems
to be different than arguments presently in the literature and might
be of independent interest.  There are several different definitions of
``Arnold Diffusion'' (see e.g. the remarks in the introduction of the survey
by Delshams, Gidea, and de la Llave in \cite{craig_book}).  Our use of this
term for the orbits described above,  which travel close to a sequence
of invariant manifolds and lead to a change in the action variable of order 1,
is in line with some, but not all of these conventions.
We emphasize also that ``diffusion'' is used here  as in much
of the literature, in a nontechnical - even colloquial - sense following
\cite{arnold}.

The paper is organized as follows.  Section \ref{overview} provides
a more detailed overview of the argument, giving  the
proof of Theorem \ref{main} modulo some intermediate claims.  Section
\ref{multi-hop} contains a proof that the Toy Model System exhibits the
unstable orbits described above.  Section \ref{NumberTheory} constructs
the frequency set $\Lambda$.  Finally, we prove as a postscript in Section \ref{appendix} that no nontrivial
solutions to \eqref{nls} scatter to linear solutions.
%In the next section, we recast the \eqref{nls} dynamics as
%n infinite dimensional coupled ODE system in the Fourier
%coefficients of the solution and study resonance properties
%of the ODE system. Section 3 gives an overview of the proof
%and the rest of the paper.

\section{Overview and Proof of Main Theorem} \label{overview}

\subsection{Preliminary Reductions:  NLS as an Infinite System of ODE's}
%We consider the cubic defocusing $NLS$ equation
%\begin{equation}
%  \label{NLS}
%  (-i \partial_t + \Delta ) u = |u|^2 u
%\end{equation}
%on the 2 dimensional torus $x \in \T^2$. We denote the initial value
%$u(0)$.

The  equation \eqref{nls} has gauge freedom: upon writing
\begin{equation}
  \label{vfromu}
  v(t,x) = e^{iGt} u (t,x), ~ G\in \R ~~\mbox(constant),
\end{equation}
the $NLS$ equation \eqref{nls} becomes the following equation for $v$,
\begin{equation}
  \label{NLSG}
  (-i\partial_t + \Delta) v = (G + |v|^2) v
\end{equation}
with the same initial data. (We will soon choose the constant $G$ to
achieve a cancellation.)

We make the {\it{Fourier Ansatz}}, motivated by the explicit solution
of the linear problem associated to $NLS$.  For solutions of the $NLS_G$
equation \eqref{NLSG}, we write
\begin{equation}
  \label{FourierAnsatz}
  v(t,x) = \sum_{n \in \Z^2} a_n (t) e^{i(n \cdot x + |n|^2 t)}.
\end{equation}
We consider in this paper only smooth solutions,  so the series on the right of
\eqref{FourierAnsatz} is absolutely summable.
Substituting \eqref{FourierAnsatz} into the equation \eqref{nls} and equating Fourier coefficients
 for both sides gives the following infinite system of equations for $a_n(t)$,
\begin{equation}
  \label{FNLSG}
  -i \partial_t a_n = G a_n + \sum_{\begin{matrix}
                                       \scriptstyle{n_1, n_2 , n_3 \in
                                       \Z^2} \\
                                       \scriptstyle{n_1 - n_2 + n_3 =
                                       n} \\
                                      \end{matrix}}
a_{n_1}  \overline{a_{n_2}} a_{n_3} e^{i \omega_4 t},
\end{equation}
where
\begin{equation}
  \label{omegafour}
  \omega_4 = |n_1|^2 - |n_2|^2 + |n_3|^2 - |n|^2.
\end{equation}
Certain terms on the right hand side of \eqref{FNLSG} can be removed
by correctly choosing the gauge parameter $G$.
We describe this cancellation this now.  Split the sum on the right hand side of \eqref{FNLSG} into the following terms,
%\begin{itemize}
%\item the separation of the sum into pieces
  \begin{align}
 \label{sumdecomposition}
    \sum_{\begin{matrix}
                                        \scriptstyle{n_1, n_2 , n_3 \in
                                        \Z^2} \\
                                        \scriptstyle{n_1 - n_2 + n_3 =
                                        n} \\
                                       \end{matrix}}
=\sum_{\begin{matrix}
                                       \scriptstyle{n_1, n_2 , n_3 \in
                                       \Z^2} \\
                                       \scriptstyle{n_1 - n_2 + n_3 =
                                       n} \\
                                       \scriptstyle{n_1, n_3 \neq n} \\
                                     \end{matrix}}
+
 \sum_{\begin{matrix}
                                       \scriptstyle{n_1, n_2 , n_3 \in
                                       \Z^2} \\
                                       \scriptstyle{n_1 - n_2 + n_3 =
                                       n} \\
                                     \scriptstyle{n_1 = n}
                                      \end{matrix}}
+
\sum_{\begin{matrix}
                                       \scriptstyle{n_1, n_2 , n_3 \in
                                       \Z^2} \\
                                       \scriptstyle{n_1 - n_2 + n_3 =
                                       n} \\
                                     \scriptstyle{n_3 = n}
                                      \end{matrix}}
- \sum_{\begin{matrix}
                                       \scriptstyle{n_1, n_2 , n_3 \in
                                       \Z^2} \\
                                       \scriptstyle{n_1 - n_2 + n_3 =
                                       n} \\
                                     \scriptstyle{n_3 = n_1 = n}
                                      \end{matrix}}
             & := \mbox{Term I} +  \mbox{Term II} + \mbox{Term III} + \mbox{Term IV}.
\end{align}
Term IV here is not a sum at all, but rather $- a_n(t) |a_n(t)|^2$.  Terms II and III are single sums which by Plancherel's theorem and \eqref{massconservation} total,
\begin{align*}
2a_n(t) \cdot \sum_{m \in \Z^2} |a_m(t)|^2 & = 2 a_n(t) \cdot \|u(t)\|^2_{L^2(\T^2)} \\
& = 2 a_n(t) M^2,
\end{align*}
where we've written  $M := \|u(t)\|_{L^2(\T^2)}$.
We can cancel this with the first term on the right side of
  \eqref{FNLSG} by choosing $G = - 2M$ in \eqref{vfromu}.
%\item the choice of
%  \begin{equation}
%    \label{GaugeChoice}
%    G = - \| v (t ) \|_{L^2_x}^2 = - \sum_k |a_k (t) |^2;
%  \end{equation}
%\end{itemize}
Equation \eqref{FNLSG} takes then the following useful form, which we denote $\mathcal{F}NLS$,
\begin{equation}
  \label{FNLS}
  -i \partial_t a_n = - a_n |a_n |^2 + \sum_{n_1, n_2 , n_3 \in \Gamma
   (n) } a_{n_1} \overline{a_{n_2}} a_{n_3} e^{i \omega_4 t},
\end{equation}
where
\begin{equation}
  \label{Gamman}
  \Gamma (n) = \{ (n_1, n_2, n_3) \in (\Z^2)^3 : n_1 - n_2 + n_3 = n, n_1
  \neq n, n_3 \neq n \}.
\end{equation}
Note that at each time $u$ is easily recoverable from $v$, and both functions have identical Sobolev
norms.
%Note that the choice of $G$ cancels away the $n=n_1$ and $n = n_3$ sums in \eqref{sumdecomposition}.

One can easily show that  $\mathcal{F}NLS$ is locally well-posed  in $l^1 (\Z^2)$, and for
completeness we sketch the argument here. Define the trilinear operator
\begin{equation*}
\lN(t): l^1 (\Z^2) \times l^1 (\Z^2)
\times l^1 (\Z^2) \longmapsto l^1(\Z^2)
\end{equation*}
by
\begin{equation}
  \label{Ndefined}
  (\lN (t) (a,b,c) )_n = -a_n {\overline{b_n}} c_n + \sum_{n_1, n_2,
  n_3 \in \Gamma (n)} a_{n_1} {\overline{b_{n_2}}} c_{n_3} e^{i
  \omega_4 t}.
\end{equation}
With this notation, we can reexpress $\mathcal{F}NLS$ as $-i \partial_t a_n = (\lN (t) (a,a,a))_n.$

\begin{lemma}
  \label{trilinearlemma}
  \begin{equation}
    \label{triest}
    {{\| (\lN (t) (a,b,c))_n \|}_{l^1 (\Z^2 )} } \lesssim {{\| a
    \|}_{l^1 (Z^2)}}  {{\| b
    \|}_{l^1 (Z^2)}}  {{\| c
    \|}_{l^1 (Z^2)}}
  \end{equation}
\end{lemma}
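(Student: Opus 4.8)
The plan is to prove the trilinear estimate \eqref{triest} directly by the triangle inequality, bounding each of the two pieces of $(\lN(t)(a,b,c))_n$ separately. The crucial observation is that the exponential factor $e^{i\omega_4 t}$ has modulus one, so it plays no role whatsoever in an $l^1$ estimate; the entire content is the convolution structure of the nonlinearity. First I would handle the diagonal term: $\sum_{n\in\Z^2}|a_n\overline{b_n}c_n| \leq \sup_n |b_n| \sum_n |a_n||c_n| \leq \|b\|_{l^\infty}\|a\|_{l^1}\|c\|_{l^1} \leq \|a\|_{l^1}\|b\|_{l^1}\|c\|_{l^1}$, using $\|b\|_{l^\infty}\leq\|b\|_{l^1}$.

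For the main sum, I would drop the constraints $n_1\neq n$, $n_3\neq n$ (this only enlarges the sum) and estimate
\[
\sum_{n\in\Z^2}\Bigl|\sum_{n_1-n_2+n_3=n} a_{n_1}\overline{b_{n_2}}c_{n_3}e^{i\omega_4 t}\Bigr|
\leq \sum_{n\in\Z^2}\sum_{\substack{n_1,n_2,n_3\in\Z^2\\ n_1-n_2+n_3=n}} |a_{n_1}|\,|b_{n_2}|\,|c_{n_3}|.
\]
Now the sum over $n$ together with the constraint $n_1-n_2+n_3=n$ is vacuous — for each choice of $(n_1,n_2,n_3)$ there is exactly one $n$ — so the right-hand side collapses to $\sum_{n_1,n_2,n_3\in\Z^2}|a_{n_1}|\,|b_{n_2}|\,|c_{n_3}| = \|a\|_{l^1}\|b\|_{l^1}\|c\|_{l^1}$. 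Equivalently, this is just Young's inequality $\|f*g*h\|_{l^1}\leq\|f\|_{l^1}\|g\|_{l^1}\|h\|_{l^1}$ applied to $f=|a|$, $g=|\overline{b}|=|b|$ (reflected), $h=|c|$. Adding the two contributions gives \eqref{triest} with an absolute constant (in fact $2$).

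There is essentially no obstacle here: the only thing to be slightly careful about is the bookkeeping of the $\overline{b_{n_2}}$ — one must note that conjugation does not change $|b_{n_2}|$ — and the sign pattern $n_1-n_2+n_3=n$ rather than $n_1+n_2+n_3=n$, which is irrelevant after taking absolute values since $n_2\mapsto -n_2$ is a bijection of $\Z^2$. The $l^1$ algebra property under convolution is exactly what makes \eqref{triest} hold with no derivative loss, and it is precisely why the authors choose to set up $\mathcal{F}NLS$ in $l^1(\Z^2)$: once this lemma is in hand, local well-posedness of $\mathcal{F}NLS$ in $l^1$ follows from a standard Picard iteration / contraction mapping argument on the Duhamel form $a(t) = a(0) + i\int_0^t \lN(s)(a,a,a)\,ds$.
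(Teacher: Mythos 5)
Your proposal is correct and follows essentially the same route as the paper: the diagonal term is bounded via $l^\infty$--$l^1$ H\"older together with $l^1 \subset l^\infty$, and the convolution sum is handled by taking absolute values (discarding the unimodular factor $e^{i\omega_4 t}$) and reindexing the sum over $n$ against the constraint $n_1 - n_2 + n_3 = n$, which is exactly the paper's ``replace the $n_3$ sum by the $n$ sum'' step, i.e.\ Young's inequality in $l^1$. No gaps.
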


\begin{proof}
The $l^1$ norm of the first term in \eqref{Ndefined} is
bounded by $\| a \|_{l^\infty} \|b \|_{L^\infty} \|c \|_{l^1}$, which
is bounded as claimed. For
the second term, take absolute values inside the $\sum_{n_1 , n_2 ,
  n_3 \in \Gamma (n)}$ and then replace the $n_3$ sum by the $n$ sum
using the $\Gamma(n)$ defining relation to observe the bound \eqref{triest}.
\end{proof}

\begin{remark}
  \label{lifetimeremark}
From Lemma \ref{trilinearlemma} and standard Picard iteration arguments one obtains local well-posedness in $l^1 (\Z^2)$. The local well-posedness result is valid on $[0,T]$ with
\begin{equation*}
  T \thicksim {{\| a(0)\|}_{l^1 (\Z^2)}^{-2}}.
\end{equation*}
The equation $-i \partial_t a = \lN (t) (a,a,a)$ behaves roughly like
the ODE $\partial_t a = a^3$ for the purposes of local existence theory.
\end{remark}

\subsection{Resonant and Finite Dimensional Truncations of $\mathcal{F}NLS$}

We define the {\it{set of all resonant non-self interactions}} $\Gamma_{res} (n) \subset \Gamma (n)$ by
\begin{equation}
  \label{GammaRes}
  \Gamma_{res}(n) = \{ (n_1, n_2, n_3) \in \Gamma (n) : \omega_4 =
  |n_1|^2 - |n_2|^2 + |n_3|^2 - |n|^2 = 0 \}.
\end{equation}
Note that $(n_1, n_2, n_3) \in \Gamma_{res} (n)$ precisely when  $(n_1 , n_2 , n_3 , n)$
form four corners of a nondegenerate rectangle with $n_2$ and  $n$
opposing each other, and similarly for $n_1 $ and $n_3$.
One way to justify this claim is to
first note the following symmetry:  the two conditions \eqref{Gamman},
\eqref{GammaRes} defining $\Gamma_{res}(n)$ imply directly that
$(n_1, n_2, n_3) \in
\Gamma_{res}(n)$ if and only if $(n_1 - n_0, n_2 - n_0, n_3 - n_0) \in \Gamma_{res}(n - n_0)$ for
any $n_0 \in \Z^2$.  Choosing $n_0 = n$,  it suffices to prove the above
geometric interpretation for $\Gamma_{res}(n)$
in the case $n = 0$, and this follows immediately from the law of cosines.
%The resonant set consists of all triples $(n_1, n_2, n_3)$ such that the four points
%$n_1, n_2, n_3, n$ in $\Z^2$ form the opposing points of a rectangle (with all four frequencies distinct).
%We pause to justify this elementary fact. Let $n_1, n_2, n_3 , n_4 \in \R^2$ be the cyclically ordered points in a rectangle. Thus, $n_1 + n_3 = n_2 + n_4$ and $n_1 - n_2 = n_4 - n_3$. Since we have a rectangle, we also have $(n_2 - n_1) \cdot (n_3 - n_2) = (n_2 - n_1) \cdot (n_4 - n_1) = 0$. We factor and reexpress to observe
%\begin{eqnarray*}
%\omega_4 &  = &(n_1 + n_2) \cdot (n_1 - n_2) + (n_3 + n_4) \cdot (n_3 - n_4) \\
%& = & (n_1 - n_2) \cdot ( n_1 + n_2 - [n_3 + n_4]) \\
%& = & (n_1 - n_2) \cdot (n_1 - n_4) + (n_1 - n_2) \cdot (n_2 - n_3) \\
%&= & 0.
%\end{eqnarray*}
%Inside the resonant set $\Gamma(n_4)$, we always have $n_2 - n_3 = n_1 - n_4$ and hence $\omega_4 = 2( n_1 - n_2) \cdot (n_1 - n_4)$. Thus, the 4-tuple $(n_1, n_2, n_3, n_4)$ is resonant if and only if it forms a rectangle.

Heuristically, the resonant interactions dominate in \eqref{FNLS}
because they do not
contain the $e^{i \omega_4 t}$ factor that oscillates in time.
We approximate solutions
of \eqref{FNLS} by simply discarding the nonresonant interactions - and
we make this approximation rigorous with Lemma
\ref{approx} below.
For now, we simply define the {\it{resonant truncation}} $R\mathcal{F}NLS$
of $\mathcal{F}NLS$ by,
\begin{equation}
  \label{RFNLS}
-i \partial_t r_n = - r_n |r_n|^2 + \sum_{(n_1 , n_2 , n_3 ) \in
 \Gamma_{res} (n)} r_{n_1} {\overline{r_{n_2}}} r_{n_3}.
\end{equation}

Even after making the resonant approximation, we still have an infinite ODE to work with - $n$ can range freely over $\Z^2$.  Our strategy is to choose initial data for which the the system simplifies
in several ways.

Suppose we have some finite set of frequencies $\Lambda$ that satisfies the  following two properties:
\begin{itemize}
\item ($\mbox{Property I}_\Lambda$:  Initial data) The initial data $r_n(0)$ is entirely supported in $\Lambda$ (i.e. $r_n(0) = 0$ whenever $n \not \in \Lambda$).

\item ($\mbox{Property II}_\Lambda$:  Closure) Whenever $(n_1,n_2,n_3,n_4)$ is a rectangle in $\Z^2$ such that three of the corners lie in $\Lambda$, then the fourth
corner must necessarily lie in $\Lambda$.  In terms of previously defined notation, this can
be expressed as follows,
\begin{equation}
\label{closed}
  (n_1 , n_2 , n_3 ) \in \Gamma_{res} (n) , n_1 , n_2 , n_3  \in
  \Lambda  \implies n \in \Lambda.
\end{equation}
\end{itemize}

Then one can show that $r_n(t)$ stays supported in $\Lambda$ \emph{for all time}.  Intuitively, this is because the non-linearity in \eqref{RFNLS} cannot excite any modes outside of $\Lambda$ if one only starts with modes inside $\Lambda$.

%We argument will actually build a
%{\emph finite dimensional}
%purely resonant truncation of
%$\mathcal{F} NLS$ which approximate solutions to $\mathcal{F} NLS.$
%Suppose $(n_1 , n_2 , n_3 ) \in \Gamma (n)$ and consider the
%interaction in \eqref{FNLS}:
%\begin{equation*}
%a_{n_1} {\overline{a_{n_2}}} a_{n_3}
%e^{i (|n_1|^2 - |n_2|^2 + |n_3|^2 - |n|^2 ) t}
%\end{equation*}
%This term contributes to $\partial_t a_n $ so it excites activity at
%site $n \in \Z^2$. Finite dimensional models must close up and
%preclude energy leakage outside some finite set of active sites.
%We  say that a set
%$\Lambda \subset \Z^2 $ is
%{\it{closed under resonant interactions}} if
%\begin{equation}
%\label{closed}
%  (n_1 , n_2 , n_3 ) \in \Gamma_{res} (n) , n_1 , n_2 , n_3  \in
%  \Lambda \subset \Z^2 \implies n \in \Lambda.
%\end{equation}
For completeness, we include here the short argument
that condition \eqref{closed} guarantees a finite dimensional model
if the finite set $\Lambda$ contains the support of the initial data
$r(0) = \{r_n(0)\}_{n \in \Z^2}$ for \eqref{RFNLS}.
\begin{lemma}
  \label{resonantclosurelemma}
If $\Lambda$ is a finite set satisfying   $\mbox{Property I}_\Lambda$, $\mbox{Property II}_\Lambda$
above, and $r(0) \longmapsto r(t)$
solves $R \mathcal{F} NLS$ \eqref{RFNLS} on $[0,T]$ then for all $t \in [0,T]
\spt [r(t)] \subset \Lambda$.
\end{lemma}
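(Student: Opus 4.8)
The plan is to build by hand a solution of $R\mathcal{F}NLS$ that is supported in $\Lambda$ for all time, and then invoke uniqueness for the $l^1(\Z^2)$ Cauchy problem, which follows from the trilinear estimate of Lemma~\ref{trilinearlemma} via Picard iteration (cf. Remark~\ref{lifetimeremark}).

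First I would consider the finite system of ODE's obtained by restricting \eqref{RFNLS} to the index set $\Lambda$: for $n\in\Lambda$,
\[
  -i\partial_t b_n = -b_n|b_n|^2 + \sum_{\substack{(n_1,n_2,n_3)\in\Gamma_{res}(n)\\ n_1,n_2,n_3\in\Lambda}} b_{n_1}\overline{b_{n_2}}b_{n_3},
\]
with data $b_n(0)=r_n(0)$, $n\in\Lambda$. Since $\Lambda$ is finite this is a system of finitely many ODE's with polynomial (hence locally Lipschitz) right-hand side, so it has a unique maximal solution. Extend $b$ to all of $\Z^2$ by setting $\hat b_n\equiv 0$ for $n\notin\Lambda$; then $\hat b(t)\in l^1(\Z^2)$ with $\spt \hat b(t)\subset\Lambda$.

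The key step is to check that $\hat b$ solves the \emph{full} system \eqref{RFNLS}. For $n\in\Lambda$ this is immediate: in $\sum_{\Gamma_{res}(n)}$ every term containing a factor $\hat b_{n_j}$ with $n_j\notin\Lambda$ vanishes, so the sum collapses to the restricted one defining $b_n$. For $n\notin\Lambda$, the left side is $-i\partial_t\hat b_n=0$, the self-interaction $-\hat b_n|\hat b_n|^2$ vanishes, and a term $\hat b_{n_1}\overline{\hat b_{n_2}}\hat b_{n_3}$ in $\sum_{\Gamma_{res}(n)}$ can be nonzero only if $n_1,n_2,n_3\in\Lambda$, in which case $\mbox{Property II}_\Lambda$ (i.e. \eqref{closed}) forces $n\in\Lambda$, a contradiction; hence the right side also vanishes. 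Thus $\hat b$ solves \eqref{RFNLS}, and by uniqueness $\hat b(t)=r(t)$ on their common interval of existence. Since $r$ is assumed to exist (and be continuous in $l^1$) on $[0,T]$, the quantity $\|b(t)\|_{l^1}=\|\hat b(t)\|_{l^1}=\|r(t)\|_{l^1}$ stays bounded there, so the maximal solution $b$ cannot blow up before time $T$; therefore $\hat b=r$ on all of $[0,T]$, giving $\spt[r(t)]\subset\Lambda$ for $t\in[0,T]$.

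The only genuinely delicate point is the bookkeeping at indices $n\notin\Lambda$: one must read \eqref{closed} in its contrapositive form — if $n\notin\Lambda$ then every $(n_1,n_2,n_3)\in\Gamma_{res}(n)$ has at least one $n_j\notin\Lambda$ — to conclude the nonlinearity cannot leak out of $\Lambda$. A variant that sidesteps the maximal-interval discussion is a direct Gr\"onwall argument applied to $m(t):=\sum_{n\notin\Lambda}|r_n(t)|$: the Duhamel form of \eqref{RFNLS}, the observation just made (each off-$\Lambda$ contribution carries a factor $r_{n_j}$ with $n_j\notin\Lambda$), and the $l^1$ summation trick from the proof of Lemma~\ref{trilinearlemma} give $m(t)\lesssim\int_0^t\|r(s)\|_{l^1}^2\,m(s)\,ds$; combined with $m(0)=0$ (by $\mbox{Property I}_\Lambda$) and $\sup_{[0,T]}\|r(s)\|_{l^1}<\infty$, Gr\"onwall yields $m\equiv 0$.
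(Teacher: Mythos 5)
Your argument is correct, but your main route differs from the paper's. The paper proves Lemma \ref{resonantclosurelemma} directly by a Gronwall argument on the mass outside $\Lambda$: it sets $B(t):=\sum_{n\notin\Lambda}|r_n(t)|^2$, uses the closure property \eqref{closed} (in exactly the contrapositive form you isolate: any resonant interaction feeding a mode $n\notin\Lambda$ must involve a factor $r_{n_j}$ with $n_j\notin\Lambda$) together with the boundedness of the solution to get $|B'(t)|\le C|B(t)|$, and concludes $B\equiv 0$ from $B(0)=0$. Your primary argument instead builds the $\Lambda$-supported solution by hand from the finite polynomial ODE system and identifies it with $r$ via uniqueness for the $l^1$ Cauchy problem (the trilinear bound of Lemma \ref{trilinearlemma} restricts to $\Gamma_{res}(n)\subset\Gamma(n)$, so Picard uniqueness indeed carries over to \eqref{RFNLS}, as the paper notes in a footnote), plus a standard open-closed continuation step and a no-blow-up-before-$T$ observation. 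This buys a slightly more structural statement -- the restricted finite system genuinely generates the dynamics -- at the cost of invoking the local well-posedness machinery and the maximal-interval bookkeeping, whereas the paper's proof is shorter and self-contained once boundedness of $r$ is granted. Your closing ``variant'' with $m(t):=\sum_{n\notin\Lambda}|r_n(t)|$ is essentially the paper's own proof, with the $\ell^1$ tail in place of the paper's $\ell^2$ quantity; both work for the same reason.
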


\begin{proof}
Define,
$$ B(t) := \sum_{n \not \in \Lambda} |r_n(t)|^2.$$
Thus $B(0) = 0$, and from the closure property and the
boundedness\footnote{The argument sketched previously
for local well-posedness in
$l^1$ of \eqref{FNLS} carries over to \eqref{RFNLS} without change.}
of $r_n(t)$ we get the differential inequality
$$ |B'(t)| \leq C |B(t)|.$$
Thus, by the Gronwall estimate, $B(t) = 0$ for all $t$.  Hence,
none of the modes outside of $\Lambda$ are excited.
\end{proof}

Our strategy thus far is to choose initial data for \eqref{nls} with Fourier support in
such a set $\Lambda$ - so that the resonant system \eqref{RFNLS} reduces to a finite dimensional
system.  We now place more conditions on the set $\Lambda$ and on the initial
data which bring about
further simplifications.

We demand that for some positive integer $N$ (to be specified later), the set
$\Lambda$ splits into $N$ disjoint \emph{generations}
$\Lambda = \Lambda_1 \cup \ldots \cup \Lambda_N$
which satisfy the properties we specify below, after first introducing
necessary terminology.  Define a \emph{nuclear family} to
be a rectangle $(n_1, n_2, n_3, n_4)$ where the frequencies
$n_1, n_3$ (known as the ``parents'') live in a generation $\Lambda_j$, and
the frequencies $n_2, n_4$ (known as the ``children'') live
in the next generation $\Lambda_{j+1}$.  Note that if $(n_1,n_2,n_3,n_4)$ is
a nuclear family, then so is $(n_1,n_4,n_3,n_2)$,
$(n_3,n_2,n_1,n_4)$, and $(n_3,n_4,n_1,n_2)$; we shall call these
the \emph{trivial permutations} of the nuclear family.  We require the following
properties (in addition to the initial data and closure hypotheses described above):

\begin{itemize}

\item ($\mbox{Property III}_\Lambda$: Existence and uniqueness of spouse and children) For any $1 \leq j < N$ and any $n_1 \in \Lambda_j$ there exists a unique nuclear family
$(n_1,n_2,n_3,n_4)$ (up to trivial permutations) such that $n_1$ is a parent of this family.  In particular each $n_1 \in \Lambda_j$ has a unique spouse
$n_3 \in \Lambda_j$ and has two unique children $n_2, n_4 \in \Lambda_{j+1}$ (up to permutation).

\item ($\mbox{Property IV}_\Lambda$: Existence and uniqueness of sibling and parents) For any $1 \leq j < N$ and any $n_2 \in \Lambda_{j+1}$ there exists a unique
nuclear family $(n_1,n_2,n_3,n_4)$ (up to trivial permutations) such that $n_2$ is a child of this family.  In particular each $n_2 \in \Lambda_{j+1}$
has a unique sibling $n_4 \in \Lambda_{j+1}$ and two unique parents $n_1, n_3 \in \Lambda_j$ (up to permutation).

\item ($\mbox{Property V}_\Lambda$:  Nondegeneracy) The sibling of a frequency $n$ is never equal to its spouse.

\item ($\mbox{Property VI}_\Lambda$:  Faithfulness) Apart from the nuclear families, $\Lambda$ contains no other rectangles.  (Indeed, from the Closure hypothesis, it does not
even contain any right-angled triangles which are not coming from a nuclear family.)

\end{itemize}

Despite the genealogical analogies, we will not assign a gender to any individual frequency (one could do so, but it is somewhat artificial);
these are asexual families.  Since every pair of parents in one generation corresponds to exactly one pair of
children in the next, a simple counting argument now shows that
each generation must have exactly the same number of frequencies.

At present it is not at all clear that such a $\Lambda$ even exists for any given $N$.
But assuming this for the moment, we can simplify
the equation \eqref{RFNLS}.  It now becomes
\begin{equation}\label{gamma-rectangle-generations}
-i \partial_t r_n(t) = -|r_n(t)|^2 r_n(t) + 2 r_{n_{\child-1}}(t) r_{n_{\child-2}}(t) \overline{r_{n_{\spouse}}(t)}
+ 2 r_{n_{\parent-1}}(t) r_{n_{\parent-2}}(t) \overline{r_{n_{\sibling}}(t)}
\end{equation}
where for each $n \in \Lambda_j$, $n_{\spouse} \in \Lambda_j$ is its spouse,
$n_{\child-1}$, $n_{\child-2} \in \Lambda_{j+1}$ are its two children,
$n_{\sibling} \in \Lambda_j$ is its sibling, and $n_{\parent-1}$, $n_{\parent-2} \in \Lambda_{j-1}$ are its parents.  If $n$ is in the last
generation $\Lambda_N$ then we omit the term involving spouse and children; if $n$ is in the first generation $\Lambda_1$ we omit
the term involving siblings and parents.  The factor ``2'' arises from the
trivial permutations of nuclear families.

We now simplify this ODE by making yet another assumption, this time again involving
 the initial data:

\begin{itemize}
\item ($\mbox{Property VII}_\Lambda$: Intragenerational equality) The function
$n \mapsto r_n(0)$ is constant on each generation $\Lambda_j$.  Thus $1 \leq j \leq N$
and $n, n' \in \Lambda_j$ imply $r_n(0) = r_{n'}(0)$.
\end{itemize}

It is easy to verify by another Gronwall argument that if one has intragenerational
equality at time 0 then one has intragenerational
equality at all later times.  This is basically because the
frequencies from each generation interact with that generation and with
its adjacent generations in exactly the same way (regardless of what the
combinatorics of sibling, spouse, children, and parents are).
Thus we may collapse the function $n \mapsto r_n(t)$, which is currently
a function on $\Lambda = \Lambda_1 \cup \ldots \cup \Lambda_N$,
to the function $j \mapsto b_j(t)$ on $\{1,\ldots,N\}$, where $b_j(t) \
;= r_n(t)$ whenever $n \in \Lambda_j$.  Thus we describe the evolution
by using a single complex scalar for each generation.
%(This is similar to how under the assumption of spherical symmetry, a PDE can collapse
%to an ODE).
The ODE \eqref{gamma-rectangle-generations} now collapses to the following
system
that we call the {\emph{Toy Model System}}
\begin{equation}
\label{toymodel}
-i \partial_t b_j(t) =-|b_j(t)|^2 b_j(t) + 2 b_{j-1}(t)^2 \overline{b_j}(t) +
2 b_{j+1}(t)^2 \overline{b_j}(t),
\end{equation}
with the convention\footnote{We show rigorously later that the support of $b(t)$ is
 constant in time - so that $b_0, b_{N+1}$ will vanish as a consequence of
 our choice of initial data.  In other words, $b_0(t)$ and $b_{N+1}(t)$ remain zero
 for all time because of the dynamics induced by the resonant system, and not by
 a possibly artificial convention introduced alongside these new variables.}
 that $b_0(t) = b_{N+1}(t) = 0$.

%In fact we will demand below additional properties for the set $\Lambda$ - and we show both that is possible to construct a set $\Lambda$ satisfying all these properties, and that
%for certain initial data supported in such sets,
%the system \eqref{RFNLS} attains a specific form - what we will refer to below as the \emph{Toy Model
%System}.

There are three main ingredients in the proof of Theorem \ref{main}.
The first ingredient is the construction of the
finite set $\Lambda$ of frequencies.
%which in addition to being finite and closed under resonant
%interactions  satisfies
%a number of  conditions under which
%the system \eqref{RFNLS} simplies further to what we will call the \emph{Toy Model} system.
The second ingredient is the proof that the Toy Model System described above exhibits a particular
instability:  we show that there exist
solutions of the Toy Model which thread through small neighborhoods of an arbitrary
number of distinct invariant
tori.  More specifically,
we show that we have a ``multi-hop'' solution to this ODE
in which the mass is initially concentrated at $b_3$ but eventually ends up at $b_{N-2}$.
%(This multi-hop solution was described in Lemma \ref{diffusion} above, and again in
%Lemma \ref{arnold} below.)
In terms of the resonant system \eqref{RFNLS}, this instability corresponds to the growth of higher
Sobolev norms.  The third ingredient is an Approximation Lemma which gives
conditions under which  solutions of $R{\mathcal{F}}NLS$ \eqref{RFNLS},
and hence solutions corresponding to the Toy Model System, approximate
an actual solution of the original NLS equation.  When the conditions of this Approximation
Lemma are satisfied, it
is enough to construct  a solution evolving
according to the Toy Model which exhibits the desired growth in $H^s$. A scaling
argument shows that these conditions can indeed be satisfied, and glues the three ingredients
together to complete the proof.  We now detail the claims of the three ingredients and prove
Theorem \ref{main} modulo these intermediate claims.

%A {\it{finite dimensional resonant truncation
%    $R\mathcal{F}NLS_\Lambda$ of $\mathcal{F} NLS$}}
%is obtained by restricting \eqref{RFNLS} to a finite set $\Lambda$\footnote{Later  in Sections \ref{generations} and  \ref{NumberTheory} we will give a detailed description of the
%set   $\Lambda$ that we will be using in our argument; for now we keep it general.}
%closed under resonant interactions:
%\begin{equation}
%  \label{RFNLSLambda}
%  -i \partial_t b_n = - b_n |b_n|^2 +  \sum_{(n_1 , n_2 , n_3 ) \in
% \Gamma_{res} (n) \cap \Lambda^3 } b_{n_1} {\overline{b_{n_2}}} b_{n_3}.
%\end{equation}

% At this point we have introduced most of the players in our game. It is time now to describe the general strategy the players will follow.

\subsection{First Ingredient:  The Frequency Set $\Lambda$}

%The  three main ingredients involved in the proof of Theorem
%\ref{main} are the construction of a finite set $\Lambda$ of
%frequencies that satisfies a large number of conditions and that allow
%us to deduce a Toy Model  from \eqref{RFNLSLambda} with a particular
%dynamics. The second ingredient is the proof that the Toy Model
%exhibits Arnold Diffusion. In particular, we show that there exist
%solutions which thread through small neighborhoods of an arbitrary
%number of distinct invariant
%tori. The third ingredient is an Approximation Lemma which shows that the solution of the Toy Model %approximates an actual solution of the original NLS equation. A scaling argument glues all these %pieces together to complete the proof.

The approximation to \eqref{FNLS} that we ultimately study is the time evolution of very particular
data under the equation with the nonresonant part of the nonlinearity removed.  The very particular
initial data $u(0)$ that we construct has Fourier support on a set $\Lambda \subset \Z^2$ which
satisfies one more important condition in addition to those described above.

The construction of
this frequency set $\Lambda$ is carried out in detail in Section \ref{NumberTheory}. Here we
record the precise claim we make about the set.

\begin{proposition}[First Ingredient:  the frequency set $\Lambda$]  \label{frequencies}
Given parameters $\delta \ll 1, K \gg 1$, we can find an $N \gg 1$ and a set of frequencies
$\Lambda \subset \Z^2$ with,
\begin{align*}
\Lambda & = \Lambda_1 \cup \Lambda_2 \cup \ldots \cup \Lambda_N \quad \text{disjoint union}
\end{align*}
which satisfies $\mbox{Property II}_\Lambda$ -
$\mbox{Property VI}_\Lambda$\footnote{Note that $\mbox{Property I}_\Lambda$,
$\mbox{Property VII}_\Lambda$ will be easily satisfied when we choose our initial data.}
  and also,
\begin{align}
 \label{normexplosion}
 \frac{\sum_{n \in \Lambda_N} |n|^{2s}}{\sum_{n \in \Lambda_1} |n|^{2s}} & \gtrsim \frac{K^2}{\delta^2}.
 \end{align}
 In addition, given any $\rad \gg C(K, \delta)$, we can ensure that $\Lambda$ consists of $N \cdot 2^{N-1}$
 disjoint frequencies $n$ satisfying $|n| \geq \rad$.
 \end{proposition}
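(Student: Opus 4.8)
\emph{Proof strategy.} The argument splits into a purely combinatorial construction, a geometric realization inside $\C\cong\R^2$, and a final dilation. First I would build an abstract genealogical tree: a finite set $\Sigma = \Sigma_1\sqcup\cdots\sqcup\Sigma_N$ with $|\Sigma_j| = 2^{N-1}$ for every $j$, carrying abstract operations of \emph{spouse} (a fixed-point-free involution of each $\Sigma_j$) and \emph{children} (a map taking each couple in $\Sigma_j$ bijectively to a couple in $\Sigma_{j+1}$), together with the induced notions of sibling and parents, defined so that the abstract analogues of Property III, Property IV, Property V hold tautologically. One convenient model indexes a member of generation $j$ by a string of $N-1$ binary ``family choices'' (which member of a couple, which of two children), with the couple- and child-relations realized as coordinate flips; then all the required uniqueness statements are immediate, and a one-line count gives $|\Sigma| = N\cdot 2^{N-1}$.

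Next I would realize $\Sigma$ inside $\C$. The relevant elementary fact is that four points $n_1,n_2,n_3,n_4$ with $\{n_1,n_3\}$ and $\{n_2,n_4\}$ the pairs of opposite vertices form a (possibly degenerate) rectangle exactly when $n_1+n_3 = n_2+n_4$ and $|n_1-n_3| = |n_2-n_4|$, which is precisely the relation $(n_1,n_2,n_3)\in\Gamma_{res}(n_4)$ with $n_1,n_3\neq n_4$. Hence, from ``parents'' $n_1,n_3$ and any unit complex number $\zeta$ with $\arg\zeta\notin\{0,\pi\}$, the points $\tfrac{n_1+n_3}{2}\pm\tfrac{\zeta}{2}(n_1-n_3)$ are the children of a nondegenerate rectangle. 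Choosing $\zeta$ to be a Pythagorean rotation such as $\tfrac{3+4i}{5}$ --- whose argument is an irrational multiple of $\pi$, so that its powers are distinct and dense on the circle --- keeps the construction inside $\Q(i)$; iterating this rule generation by generation, with a suitably chosen power $\zeta^{a_j}$ at generation $j$ and a free choice of signs, produces a placement $f:\Sigma\to\Q(i)\subset\C$ under which every abstract nuclear family maps to an honest rectangle of the type demanded by Properties III and IV. Along the way one checks that $f$ is injective (so that the $N\cdot 2^{N-1}$ frequencies are genuinely distinct) and that Property V survives the embedding; both reduce to the non-vanishing of finitely many explicit Laurent polynomials in $\zeta$, which one arranges by the choice of $\zeta$.

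The heart of the matter --- and the step I expect to be the main obstacle --- is a strong \emph{faithfulness} statement: the realized set $\Lambda = f(\Sigma)$ contains no right-angled triangle other than those formed by three vertices of one of the intended nuclear families. This is not automatic. The plan is to let the realization depend on free parameters --- the rotation exponents $a_1,\dots,a_{N-1}$ together with a ``generic'' placement of the first generation --- to note that there are only finitely many combinatorial types of potential accidental right-angle configuration among the finitely many points, and to show that each such type forces the vanishing of a nonzero real-analytic (indeed Laurent-polynomial) function of the parameters, hence lies in a proper subvariety of parameter space; avoiding the finite union of these subvarieties leaves an admissible choice, to which we specialize. Granting this strong faithfulness, Property II and Property VI follow immediately: a right triangle determines a unique rectangle (the hypotenuse being its diagonal), so any rectangle in $\Lambda$ must be a trivial permutation of a nuclear family (Property VI), and if three vertices of a rectangle lie in $\Lambda$ they form such a triangle, so the rectangle is a nuclear family and its fourth vertex also lies in $\Lambda$ (Property II).

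Finally, the quantitative bound \eqref{normexplosion} and the passage to $\Z^2$. One arranges the rotations so that the first generation is confined to a single scale $R$ (all of its $2^{N-1}$ frequencies within a bounded factor of $R$), while iterating the doubling rule $N-1$ times pushes the last generation out, producing a frequency of magnitude $\gtrsim 2^{(N-1)/2}R$ in $\Lambda_N$. Then $\sum_{n\in\Lambda_1}|n|^{2s}\lesssim 2^{N-1}R^{2s}$ while $\sum_{n\in\Lambda_N}|n|^{2s}\gtrsim (2^{(N-1)/2}R)^{2s} = 2^{(N-1)s}R^{2s}$, so the ratio is $\gtrsim 2^{(N-1)(s-1)}$; since $s>1$ is fixed, choosing $N$ large in terms of $s,K,\delta$ forces this above $K^2/\delta^2$, which is \eqref{normexplosion}. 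The whole configuration lies in $\Q(i)$; multiplying every frequency by a common integer puts $\Lambda\subset\Z^2$ while disturbing neither any rectangle, nor any genealogical property, nor the scale-invariant ratio \eqref{normexplosion}; and a further integer dilation, as large as we please, forces $|n|\geq\rad$ for every $n\in\Lambda$, with the $N\cdot 2^{N-1}$ frequencies still distinct. (Property I and Property VII concern only the choice of initial data and are imposed later.)
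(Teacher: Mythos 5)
Your outline follows the same architecture as the paper's Section \ref{NumberTheory}: an abstract genealogical model with $N\cdot 2^{N-1}$ elements in which Properties III--V hold tautologically, a geometric realization in $\C$ by the rectangle recursion (children $=\tfrac{n_1+n_3}{2}\pm\tfrac{e^{i\theta}}{2}(n_1-n_3)$, justified by the same midpoint/equal-diagonal characterization of $\Gamma_{res}$), reduction of Properties II and VI to the absence of accidental right-angled triangles, the $\sqrt{2}$-per-generation model computation for \eqref{normexplosion}, and rationality/Pythagorean angles followed by an integer dilation to land in $\Z[i]$ with $|n|\geq \rad$. The genuine gap is in the step you yourself flag as the main obstacle: the genericity argument for injectivity, non-vanishing, and faithfulness. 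You assert that each accidental configuration forces a \emph{nonzero} Laurent polynomial in your parameters to vanish, but you never argue the non-vanishing, and with your choice of parameters it is far from clear.

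Concretely, the paper assigns an independent continuous angle $\theta(F)\in\R/2\pi\Z$ to \emph{each} nuclear family (together with a free continuous placement of $\Sigma_1$), and proves all the almost-everywhere statements by induction: when the family $F$ containing a new point is rotated, that point sweeps a full circle while every point not in $F$ is unaffected, so each potential coincidence or right angle is avoided for almost every $\theta(F)$; only at the very end are the surviving (open) conditions combined with the density of Pythagorean angles and of $\Q[i]$ to restore integrality. Your parametrization --- a single power $\zeta^{a_j}$ of one fixed Pythagorean rotation per generation, shared by all $2^{N-2}$ families connecting $\Sigma_j$ to $\Sigma_{j+1}$, plus sign choices --- discards exactly the per-family freedom on which that non-vanishing proof rests: varying $a_j$ moves all children of generation $j+1$ simultaneously, so the ``moving point versus fixed point'' argument is unavailable, and you cannot certify non-vanishing by evaluating at the one explicit configuration available (the model example), since that configuration is precisely the degenerate one, being highly non-injective and full of unwanted rectangles. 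Moreover, once the angles are confined from the outset to the countable set $\{\zeta^{a}\}$, ``avoid a finite union of proper subvarieties'' does not apply to those parameters without an additional openness-plus-density argument, which in turn presupposes the continuous-angle genericity statement you have skipped. (A smaller omission of the same kind: before the final dilation you need $f(x)\neq 0$ for all $x\in\Sigma$, not merely injectivity, to force $|n|\geq\rad$.) The fix is either to adopt per-family continuous angles and defer the Pythagorean/rational specialization to the end, as the paper does, or to supply a genuinely new non-degeneracy argument adapted to your much smaller parameter family; as written, the central faithfulness step is unsupported.
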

 We will use the term {\it generations} to describe the sets $\Lambda_j$ that make up $\Lambda$.
 The {\it norm explosion} condition \eqref{normexplosion} describes how in a sense,  generation $\Lambda_N$ has moved
 very far away from the frequency origin compared to generation $\Lambda_1$. We will use the
 term {\it inner radius} to denote the parameter $\rad$ which we are free to choose as large  as
 we wish.
 %Note also that
 %we are free to specify the {\it inner radius} $R$ and no frequencies in $\Lambda$ live inside $|n| < %R$.

 \subsection{Second Ingredient:  Diffusion in the Toy Model}

 The second main component of the proof is, in comparison with the first one,  considerably
 more difficult to prove.
%In the construction of the set $\Lambda$ referenced above, we will ensure that $\Lambda$ satisfies not only
%\eqref{normexplosion}, but
%also a number of additional conditions which ensure that the system of ODE's \eqref{RFNLS} simplifies
%to what we will call the {\emph Toy Model System}.  The Toy
%Model is a system for the variable,
%\begin{align*}
%b(t) & \equiv \left(b_1(t), b_2(t), \ldots, b_N(t) \right) \in \C^N
%\end{align*}
%where the variable $b_j(t)$ represents the Fourier coefficient of our approximate solution to \eqref{nls}
%at {\emph{all}} frequencies in generation $\Lambda_j$ at time $t$. That is,
%\begin{align*}
%b_j(t) & \equiv r_n(t),
%\end{align*}
%for any $n \in \Lambda_j$.  This Toy Model System is the following,
%\begin{align}
%\label{toymodel}
%-i \partial_t b_j & = -|b_j|^2 b_j + 2 \overline{b_j} (b_{j-1}^2 + b_{j+1}^2)
%\end{align}
%where for now we simply assume that $b_j=0$ whenever $j \leq
%0$ or $j \geq N+1$. (In section \ref{multi-hop} below  we show that $b_k (t) =0 $ for all
%$t$ if $b_k (0) = 0$ and the initial data $b(0) \in l^2$.)
Our claim is that we can construct initial data for the Toy Model System \eqref{toymodel}
so that at time zero, $b(0)$ is concentrated in its third
component $b_3$
%(and so the solution of \eqref{nls} we construct has initial data supported mainly, but not entirely, %in the first generation $\Lambda_1$  in frequency space),
and this concentration then
propagates from $b_3$ to  $b_4$, then to $b_5$ etc. until at some later time the
solution is concentrated at\footnote{One could in fact construct solutions that  diffuse all
the way from $b_1$ to $b_N$ by a simple modification of
the argument, but to avoid some (very minor) technical issues we shall only go from
$b_3$ to $b_{N-2}$.} $b_{N-2}$.
We will measure the extent to which the solution is concentrated with a parameter $\epsilon$.
More precisely,
\begin{proposition}[Second Ingredient:  Diffusion in the Toy Model]  \label{diffusion}
Given $N > 1, \epsilon \ll1$, there
is initial data $b(0) = (b_1(0), b_2(0), \ldots, b_N(0)) \in \C^N$ for \eqref{toymodel} and there is a time $T = T(N, \epsilon)$ so that
\begin{align*}
|b_3(0)| \geq 1 - \epsilon & \quad \quad |b_j(0)| \leq \epsilon \quad  j \neq 3, \\
% - (0,0,1,0,\ldots,0)\right| & \leq \epsilon \\
|b_{N-2}(T)| \geq 1 - \epsilon & \quad \quad |b_j(T)| \leq \epsilon \quad j \neq N-2.
% - (0,0,\ldots, 0,1,0,0) \right| & \leq \epsilon.
\end{align*}
In addition, the corresponding solution satisfies $\|b(t)\|_{l^\infty} \sim 1$ for all $0\leq t \leq T$.
\end{proposition}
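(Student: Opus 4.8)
The plan is to analyze the Toy Model System \eqref{toymodel} by exploiting its numerous invariant structures. First I would observe that the system conserves the mass $\sum_j |b_j|^2$, and that each coordinate subspace in which all but a few $b_j$ vanish is invariant. The key invariant objects are the two-dimensional ``sloshing'' sets: for each $j$, the locus where $b_j, b_{j+1}$ carry all the mass and the remaining coordinates vanish. On such a set the dynamics reduces to a two-mode ODE which can be explicitly integrated (it is essentially the pendulum-type system $-i\dot b_j = -|b_j|^2 b_j + 2 b_{j+1}^2\overline{b_j}$, $-i\dot b_{j+1} = -|b_{j+1}|^2 b_{j+1} + 2 b_j^2 \overline{b_{j+1}}$), and crucially one checks that the solution which begins with essentially all mass at $b_j$ will, after a finite time, transfer essentially all its mass to $b_{j+1}$ — the ``single-hop'' or oscillator behavior. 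After normalizing the total mass to $1$, these become invariant circles/tori, and the single-hop solution is a heteroclinic-type connection between the neighborhood of ``mass at $b_j$'' and ``mass at $b_{j+1}$.''

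The heart of the argument is then a shadowing/concatenation step: I would show that the true $N$-dimensional flow, started from data $\epsilon$-close to the pure state $b_3$, stays close for a long time to the successive two-mode solutions, hopping from a small neighborhood of ``mass at $b_3$'' to a small neighborhood of ``mass at $b_4$,'' then to $b_5$, and so on up to $b_{N-2}$. Each hop is governed by a two-mode subsystem plus an error coming from the coupling to the other (small) modes; the point is that during the $j$-th hop the modes $b_k$ with $k \notin \{j, j+1\}$ remain small, being driven only quadratically by already-small quantities, so a Gronwall estimate keeps them of size $O(\epsilon)$ (or some controlled function of $\epsilon$) over the duration of that hop. One then uses continuity/openness of the flow to choose the initial data so that at the end of each hop the solution lands in the basin of the next heteroclinic segment. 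Because there are only finitely many ($\sim N$) hops, and at each stage we have freedom to perturb the initial data within an open set, a covering or intermediate-value argument (iterated $N-5$ times) produces initial data for which all hops succeed in sequence; the total time $T$ is the finite sum of the individual hop times, so $T = T(N,\epsilon)$.

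The main obstacle is controlling the accumulation of errors over many hops: each hop introduces a loss (the solution is only $\eta$-close to the idealized two-mode orbit for some $\eta$ depending on the size of the spectator modes and on how long the hop lasts), and naively iterating this $N$ times could blow up the error badly before reaching $b_{N-2}$. I expect this is handled either by a careful ``funnel'' argument — showing the two-mode dynamics is itself contracting toward the target state near the end of each hop, so that errors are damped rather than amplified at each stage — or by first fixing $N$, then choosing $\epsilon$ (and an internal tolerance decreasing geometrically in the hop index) small enough that the compounded error after $N-5$ hops is still below $\epsilon$. A secondary technical point is verifying that $\|b(t)\|_{l^\infty} \sim 1$ uniformly on $[0,T]$: the lower bound follows since the dominant mode always carries mass $\gtrsim 1 - O(\epsilon)$ during and between hops, and the upper bound follows from conservation of $\sum_j |b_j|^2 \sim 1$ together with the fact that no single mode ever exceeds the total mass. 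One must also make sure the transition regions — where two modes are simultaneously of size $\sim 1$ — are traversed in bounded time so that $\|b\|_{l^\infty}$ does not dip; the explicit integrability of the two-mode system makes this checkable.
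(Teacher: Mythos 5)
Your outline --- the invariant circles $\T_j$ on the unit mass sphere, the explicitly integrable two--mode subsystems whose heteroclinic (``slider'') orbits carry mass from mode $j$ to mode $j+1$, and a finite concatenation of such hops arranged by an iterated covering/continuity argument --- is exactly the skeleton of the paper's proof. But the proposal is missing (and at one point contradicts) the mechanism that makes the concatenation actually close. The circles $\T_j$ are \emph{hyperbolic}, not attracting: in local coordinates near $\T_j$ the linearization splits into stable modes $c_{j\pm1}^-$ decaying like $e^{-\sqrt3 t}$, unstable modes $c_{j\pm1}^+$ growing like $e^{+\sqrt3 t}$, and oscillatory peripheral modes. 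To arrive within a prescribed small distance of $\T_{j+1}$ the solution must linger near each circle for a long time $T$ (the true sliders take infinite time), and during that passage any error component along the unstable directions --- including the trailing unstable mode and, crucially, the leading unstable mode, which must be excited to an exponentially small but precisely tuned amplitude in order to launch the \emph{next} hop --- is amplified by factors of order $e^{\sqrt3 T}$. So your ``funnel'' hope, that the dynamics damps errors toward the end of each hop, is false, and a Gronwall bound keeping the spectator modes merely ``$O(\epsilon)$ over the duration of the hop'' does not survive the passage: the paper is forced to track the various modes at distinct exponentially small scales (sizes and uncertainties like $r\,e^{-2\sqrt3 T}$ and $T^{A}e^{-k\sqrt3 T}$, measured in anisotropic semi-metrics).

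Your fallback of ``tolerances decreasing geometrically in the hop index'' also does not by itself resolve the issue, because the real requirement is not just that errors stay small but a surjectivity statement: from the incoming set, with a fixed uncertainty in one's aim, one must be able to hit \emph{any} prescribed point of the next set (otherwise the set of admissible initial data can shrink to nothing after several hops). This is the paper's notion of one target ``covering'' the next, and proving it requires refined asymptotics along the flow --- the approximating scalar profiles $g,\tilde g,\hat g$ and the transfer matrices $G_{j+1}(T)$, $G_{\geq j+2}(T)$, together with a Gronwall-type lemma exploiting the nilpotent structure to get a \emph{lower} bound on the relevant matrix entry --- so that the unstable output amplitude and the leading peripheral modes can be ``aimed'' by adjusting the incoming data within the allowed uncertainty. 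That quantitative aiming step across each hyperbolic passage is the heart of the paper's argument (its Sections on flowing into, off, and between the targets), and it is the genuine gap in the proposal as written.
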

This Proposition will be recast in a slightly different language  in Section \ref{multi-hop} as Theorem \ref{arnold}.

\subsection{ Third Ingredient: The Approximation Lemma} \label{approx-sec}

This ingredient concerns the extent to which we  can approximate the system of ODE's
corresponding to the full NLS equation \eqref{FNLS} by other systems of ODE's. The
approximate system we will ultimately study is that coming from removing the non-resonant part of the
Fourier transform of the equation's cubic nonlinearity (i.e. equation \eqref{RFNLS}). %  We apply the
% approximation lemma to the solution of $\eqref{FNLS}$ evolving from
% the same data we will construct for the Toy Model System.
 %further restricted on a set of frequency $\Lambda$ closed under resonance, indeed the ``toy model''.
Here we write the approximation lemma
in a more general form.

\begin{lemma}[Third Ingredient:  Approximation Lemma]\label{approx}
Let $0 < \sigma < 1$ be an absolute constant (all implicit constants in this lemma may depend on $\sigma$).  
Let $B \gg 1$, and let $T \ll B^2 \log B$.  Let
\begin{align*}
g(t) & := \left\{g_n(t)\right\}_{n \in \Z^2}
\end{align*}
 be a solution to the equation
\begin{equation}\label{nls-error}
-i \partial_t g(t) =   (\lN (t) (g(t),g(t),g(t)) ) +\lE (t)
\end{equation}
for times $0 \leq t \leq T$, where $\lN (t)$ is defined in \eqref{Ndefined}, \eqref{omegafour} and where
the initial data $g(0)$ is compactly supported.  Assume also that the solution $g(t)$ and the error term $\lE(t)$ obey the  bounds of the form
\begin{align}
\label{csmall}
 \| g(t) \|_{l^1(\Z^2)} & \lesssim B^{-1} \\
\label{integrated}
 \left\| \int_0^t \lE(s)\ ds \right\|_{l^1(\Z^2)} &\lesssim B^{-1-\sigma}
\end{align}
for all $0 \leq t \leq T$.

We conclude that  if $a(t)$ denotes the solution to $\mathcal{F}NLS$ \eqref{FNLS} with initial data $a(0) = g(0)$, then we have
\begin{align} \| a(t) - g(t) \|_{l^1(\Z^2)} & \lesssim B^{-1-\sigma/2} \label{close}
\end{align}
for all $0 \leq t \leq T$.
\end{lemma}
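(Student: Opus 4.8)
The plan is to treat this as a standard "perturbed ODE stays close" argument, phrased entirely in $l^1(\Z^2)$ and driven by Duhamel's formula and Gronwall, but being careful that the gain in \eqref{close} is only $\sigma/2$ rather than $\sigma$ and that the time of validity extends up to $T \ll B^2 \log B$, which is much longer than the naive local-existence time $\sim B^2$ from Remark \ref{lifetimeremark}. First I would set $d(t) := a(t) - g(t)$, so that $d(0) = 0$, and subtract the two equations. Using trilinearity of $\lN(t)$, the difference $\lN(t)(a,a,a) - \lN(t)(g,g,g)$ expands as a sum of terms each involving at least one factor of $d$ and the remaining factors among $a,g$; by Lemma \ref{trilinearlemma} its $l^1$ norm is $\lesssim (\|a\|_{l^1} + \|g\|_{l^1})^2 \|d\|_{l^1}$. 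Thus $d$ satisfies
\begin{equation*}
-i\partial_t d(t) = \left[\lN(t)(a,a,a) - \lN(t)(g,g,g)\right] - \lE(t),
\end{equation*}
and integrating in time gives, schematically,
\begin{equation*}
\|d(t)\|_{l^1} \lesssim \left\|\int_0^t \lE(s)\,ds\right\|_{l^1} + \int_0^t (\|a(s)\|_{l^1} + \|g(s)\|_{l^1})^2 \|d(s)\|_{l^1}\,ds.
\end{equation*}
The key point in handling the forcing term is that we do \emph{not} bound $\int_0^t \|\lE(s)\|_{l^1}\,ds$ (which could be large), but instead integrate by parts / move the integral inside so that only $\|\int_0^t \lE\,ds\|_{l^1} \lesssim B^{-1-\sigma}$ enters — this is why the hypothesis \eqref{integrated} is stated on the time-integral of $\lE$ rather than on $\lE$ pointwise.

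The main technical step is the bootstrap. I would run a continuity/bootstrap argument: assume as a hypothesis that $\|d(t)\|_{l^1} \leq B^{-1-\sigma/2}$ on some maximal subinterval $[0,T']\subseteq[0,T]$. Under this hypothesis, $\|a(t)\|_{l^1} \leq \|g(t)\|_{l^1} + \|d(t)\|_{l^1} \lesssim B^{-1}$, so the coefficient $(\|a\|_{l^1}+\|g\|_{l^1})^2$ in the Gronwall integral is $\lesssim B^{-2}$. Gronwall's inequality then yields
\begin{equation*}
\|d(t)\|_{l^1} \lesssim B^{-1-\sigma}\, e^{C B^{-2} t} \lesssim B^{-1-\sigma} e^{C B^{-2} \cdot B^2 \log B} = B^{-1-\sigma} \cdot B^{C} ,
\end{equation*}
so one needs the implied constant in $T \ll B^2 \log B$ to be small enough (depending on $\sigma$) that $B^{C} \ll B^{\sigma/2}$, giving $\|d(t)\|_{l^1} \ll B^{-1-\sigma/2}$, strictly better than the bootstrap hypothesis. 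This closes the bootstrap and shows $T' = T$, completing the proof. One also needs to know $a(t)$ actually exists on all of $[0,T]$; this comes for free from the same estimate, since the bootstrap keeps $\|a(t)\|_{l^1}$ bounded by $\lesssim B^{-1}$, and the local theory of Lemma \ref{trilinearlemma} / Remark \ref{lifetimeremark} then extends the solution past any time where an a priori $l^1$ bound holds.

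I expect the main obstacle to be bookkeeping the exponent carefully in the Gronwall step: the exponential factor $e^{CB^{-2}T}$ must be controlled as a \emph{power} of $B$, namely $B^{C/\text{(constant in }T\ll B^2\log B)}$, and one has to choose the admissible range of $T$ (i.e. the implicit smallness in $T \ll B^2\log B$) in terms of $\sigma$ so that this power is at most $B^{\sigma/2}$, which is precisely why the conclusion loses half the exponent $\sigma$. A secondary point requiring care is the expansion of the trilinear difference: since $\lN$ is linear in each slot (conjugate-linear in the middle), writing $\lN(a,a,a)-\lN(g,g,g) = \lN(d,a,a) + \lN(g,d,a) + \lN(g,g,d)$ (with the middle term interpreted via the conjugate) and applying Lemma \ref{trilinearlemma} to each is routine but must be done honestly, including the diagonal term $-a_n|a_n|^2 + g_n|g_n|^2$, which is handled identically since it is just the $\Gamma(n)=\emptyset$ analogue and obeys the same trilinear bound.
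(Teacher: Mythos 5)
Your proposal is correct, and it reaches \eqref{close} by a genuinely different (though closely related) decomposition than the paper's. The paper first absorbs the integrated error into the approximate solution: it sets $F(t) := -i\int_0^t \lE(s)\,ds$ and $d := g + F$, observes that $d$ solves the unforced equation $-i\partial_t d = \lN(d-F,d-F,d-F) = \lN(d,d,d) + O_{l^1}(B^{-3-\sigma})$, and then runs a pointwise-in-time differential inequality, bootstrap, and Gronwall for $e := a - d$, recovering $a - g = e + F$ at the end. You instead integrate the difference equation for $d := a - g$ directly in time, so that $\lE$, which enters only linearly and additively, appears solely through the single term $\int_0^t \lE(s)\,ds$ controlled by \eqref{integrated} (your phrase ``integrate by parts'' is loose, but the mechanism --- never applying Minkowski to $\lE$ itself --- is exactly right), and you close with the telescoped trilinear bound from Lemma \ref{trilinearlemma} and integral-form Gronwall under the bootstrap $\|d\|_{l^1} \le B^{-1-\sigma/2}$. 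The ingredients are identical (trilinearity, Lemma \ref{trilinearlemma}, Gronwall, the requirement that the implied constant in $T \ll B^2\log B$ be small depending on $\sigma$ so that $e^{CB^{-2}T} \le B^{C c} \ll B^{\sigma/2}$), and both yield $\|a-g\|_{l^1} \lesssim B^{-1-\sigma} e^{CB^{-2}t}$. Your route has the minor advantage that after telescoping every term is linear in $d$, so there is no cubic term $\|e\|_1^3$ to absorb as in the paper's differential inequality; the paper's substitution has the advantage that the modified profile $d$ solves the clean equation exactly, so the comparison is a pure unforced perturbation of \eqref{FNLS}. Your continuation argument for $a$ via the a priori $l^1$ bound and Remark \ref{lifetimeremark} is also fine; the paper instead simply invokes global existence from the compact support of $a(0)=g(0)$.
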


%%%%%%%%%%%%%%%%%%%%%%%%%%%%%%%%%%%%%%%%%%%%%

%In this section we start by proving Lemma \ref{approx}. We then continue by proving   that  solutions to the truncated resonant system $R\mathcal{F}NLS_\Lambda$ $l^1$-approximate solutions to the original
%  $\mathcal{F}NLS$ system evolving from the same data, see Lemma \ref{ApproxResult} below.

% \begin{equation*}
%   \partial_t \| e \|_{l^1} \lesssim K^{-2-\sigma} + \| e \|_{l^1} \|b \|_{l^1}^2
% \end{equation*}
% and Gronwall's inequality with $T \ll K^2 \log K$ implies
% \begin{equation*}
%   \| e(t) \|_{l^1} \lesssim K^{-2 - \sigma} e^{ {\| b(t) \|_{l^1}^2}
%   t} \lesssim K^{-1 - \sigma},
% \end{equation*}
% validating our earlier assumption $\| e \|_{l^1} \ll \| b \|_{l^1}.$
% Thus, assuming the lemma, we have shown that certain small solutions of
% \eqref{aequation} are approximated by solutions of \eqref{bequation}
% \begin{equation}
%   \label{approxinlone}
%   \| a(t) - b(t) \|_{l^1} \lesssim K^{-1-\sigma}, ~~ t \in [0,T].
% \end{equation}
% \end{proof}

\begin{proof}[Proof of Lemma \ref{approx}]
  First note that since $a(0) = g(0)$ is assumed to be compactly supported, the solution $a(t)$ to \eqref{FNLS} exists globally in time, is smooth with respect to time, and is in $l^1(\Z^2)$ in space.

Write
$$ F(t) := -i\int_0^t \lE(s)\ ds, \, \, \mbox{ and }\, \,
 d(t) := g(t) + F(t).$$
Observe that
$$ - i d_t = \lN(d-F,d-F,d-F),$$
where we have suppressed the explicit $t$ dependence for brevity.  Observe that $g = O_{l^1}(B^{-1})$ and $F = O_{l^1}(B^{-1-\sigma})$, where we use $O_{l^1}(X)$ to denote any quantity with an $l^1(\Z^2)$ norm of $O(X)$.  In particular we have $d = O_{l^1}(B^{-1})$.  By trilinearity and \eqref{triest} we thus have
$$ - i d_t = \lN(d,d,d) + O_{l^1}(B^{-3-\sigma}).$$
Now write $a := d + e$.  Then we have
$$ - i (d+e)_t = \lN(d+e,d+e,d+e),$$
which when subtracted from the previous equation gives (after more
trilinearity and \eqref{triest})
% {\bf{(There is also a term of the form $B^{-1} \|e \|_{1}^2 $ that can be taken care
% of by similar arguments.)}}
$$ i e_t = O_{l^1}(B^{-3-\sigma}) + O_{l^1}( B^{-2} \| e \|_1 ) + O_{l^1}( \| e \|_1^3),$$
and so by the differential form of Minkowski's inequality, we have
$$ \partial_t \| e \|_1 \lesssim B^{-3-\sigma} + B^{-2} \|e\|_1 + \|e\|_1^3.$$
If we assume temporarily (i.e. as a bootstrap assumption)  that $\|e\|_1 = O(B^{-1})$ for all $t \in [0,T]$, then one can absorb the third term on the right-hand side in the second.  Gronwall's inequality then gives
$$ \|e\|_1 = B^{-1 - \sigma} \exp( C B^{-2} t )$$
for all $t \in [0,T]$.  Since we have $T \ll B^2 \log B$, we thus have 
$\|e\|_1 \ll B^{-1-\sigma/2}$, and so we can remove the a priori hypothesis 
$\|e\|_1 = O(B^{-1})$ by continuity arguments, and conclude the proof.

\end{proof}

%Note that assumption \eqref{integrated} involves a time integration - so  our error term $\lE(t)$ could be allowed to be
%somewhat large as long as it is so oscillatory as to make the antiderivative of $\lE(t)$ small.  In the approximation we
%have in mind, we won't use this though - as we'll first scale the solution of our approximate system of ODE's to be small
%in $l^1$.
%the non-linearity $\mathcal{N}(t)(c(t),c(t),c(t))$ into
%the error term $\lE(t)$.

Lemma \ref{approx} gives us an approximation on a time interval
 of approximate length $B^2 \log B$, a factor $\log B$ larger than the
 interval $[0,B^2]$ for which the solution is controlled by a straightforward local-in-time
 argument.  The exponent $\sigma/2$ can be in fact replaced by
any exponent between 0 and $\sigma$, but we choose $\sigma/2$ for concreteness.

\subsection{The Scaling Argument and  the Proof of Theorem \ref{main}.}
\label{scalingargument}

Finally we present the relatively simple scaling argument that glues  the three main components
together to get Theorem \ref{main}.

Given $\delta, K$, construct $\Lambda$ as in Proposition \ref{frequencies}.
(This is done in Section
\ref{NumberTheory} below.)   Note that we
are free to specify $\rad$ (which measures the inner radius of the frequencies involved in $\Lambda$) as large as we wish and this will be done
shortly, with $\rad = \rad(\delta, K)$.

  With the number $N = N(\delta, K)$ from the construction of
  $\Lambda$ (recall $N$ represents
  the number of generations in the set of frequencies),
  and a number $\epsilon = \epsilon(K, \delta)$ which we will specify shortly, we
construct a traveling wave solution $b(t)$ to the toy model concentrated at scale $\epsilon$ according to 
Proposition \ref{diffusion} above.  This proposition also gives us a time $T_0 = T_0(K, \delta)$ at which 
the wave has traversed the $N$ generations of frequencies.    Note that the toy model
has the following scaling,
\begin{align*}
b^{(\lambda)}(t) & := \lambda^{-1} b(\frac{t}{\lambda^2}).
\end{align*}
We choose the initial data for \eqref{nls} by setting
\begin{align}
a_n(0)& =b_j^{(\lambda)}(0) \, \, \mbox{for all } n \in \Lambda_j, \label{datachosenhere}
\end{align}
and $a_n(t) = 0$ when $n \notin \Lambda$.  We specify first the scaling parameter
$\lambda$ and then the parameter $\rad$
according to the following considerations which we detail below  after presenting them
now in only the roughest form.  The parameter $\lambda$ is chosen
large enough to ensure the Approximation Lemma \ref{approx} applies, with $c_n$ the solution
of the resonant system of O.D.E.'s \eqref{RFNLS} also evolving from the
data \eqref{datachosenhere}, over the time interval $[0, \lambda^2 T_0]$ - which is the time
the rescaled solution $b^{(\lambda)}$ takes to travel through all the generations in
$\Lambda$.  In other words, we want to apply the Approximation Lemma \ref{approx}
with a parameter $B$ chosen large enough so that,
\begin{align}
\label{need_B_to_satisfy}
B^2 \log B & \gg \lambda^2 T_0.
\end{align}
With $\lambda$ and $B$ so chosen, we will be able to prove that $\|u(t)\|_{H^s(\T^2)}$ grows  by a factor
of $\frac{K}{\delta}$ on $[0, \lambda^2T_0]$.  We finally choose $\rad$ to ensure this quantity
starts at size approximately $\delta$, rather than a much smaller scale.

We detail now these general remarks.  The aim is to apply Lemma \ref{approx} with
$c(t) = \{c_n(t)\}_{n \in \Z^2}$ defined by,
\begin{align*}
c_n(t) & = b_j^{(\lambda)}(t),
\end{align*}
for $n \in \Lambda_j$, and $c_n(t) = 0$ when $n \notin \Lambda$.  Hence, we set
$\lE (t)$ to be the non-resonant part of the nonlinearity on the right hand side of 
\eqref{FNLS}.  That is, 
\begin{align}
\label{pickE}
\lE (t) & : = - \sum_{[\Gamma(n) \backslash \Gamma_{res} (n) ] \cap \Lambda^3}  c_{n_1} {\overline{c_{n_2}}} c_{n_3} 
e^{i \omega_4 t}
\end{align}
where $\omega_4$ is as in \eqref{omegafour}.  (We include the set $\Lambda$ in the description of the 
sum above to emphasize once more that the frequency support of $c(t)$
is always in this set.)
We choose $B = C(N) \lambda$ and then show that for large enough
$\lambda$ the required conditions \eqref{csmall} and
\eqref{integrated} hold true. Observe that \eqref{need_B_to_satisfy}
  holds true with this choice for large enough $\lambda$.
Note first that simply by considering
its support, the fact that $|\Lambda| = C(N)$, and the fact that $\|b(t)\|_{l^\infty} \sim 1$,
we can be sure that,
$\|b(t)\|_{l^1(\Z)} \sim C(N)$ and therefore
\begin{align}
\|b^{(\lambda)}(t)\|_{l^1(\Z)}, \|c(t) \|_{l^1(\Z^2)}  & \leq \lambda^{-1} C(N). \label{bounds}
\end{align}
Thus, $\eqref{csmall}$ holds with the choice $B = C(N) \lambda$.  
For the  second condition \eqref{integrated}, 
we claim
\begin{equation}
\label{secondcondition}
\left \|\int_0^t  \lE(s) \,ds\right\|_{l^1}
\lesssim C(N) (\lambda^{-3} + \lambda^{-5} T).
\end{equation}
Note that this is sufficient with our choices $B = \lambda \cdot C(N)$ and $T = \lambda^2 T_0$.  
It remains only to show \eqref{secondcondition}, but this follows quickly from the van der Corput lemma:  
since $\omega_4$ does not 
vanish in the set $\Gamma(n) \backslash
  \Gamma_{res} (n)$, we can replace $e^{i \omega_4 s}$ by
  $\frac{d}{ds} [ \frac{e^{i \omega_4 s}}{\omega_4}]$ and then
  integrate by parts. Three terms arise: the boundary terms at $s=0,T$ and
  the integral term involving $\frac{d}{ds}[ c_{n_1} (s)
  {\overline{c_{n_2}}} (s)  c_{n_3} (s)]$. For the boundary terms, 
 we use \eqref{bounds} to obtain an upper bound of $C(N) \lambda^{-3}$. For the
  integral term, the $s$ derivative falls on one of the $b$
  factors. We replace this differentiated term using the equation to get  
  an expression that is 5-linear in $c$ and bounded by $C(N) \lambda^{-5} T$.

Once $\lambda$ has been  chosen as above,
we choose $\rad$ sufficiently large so that the initial data $c(0)=a(0)$ has the right size:
\begin{align}
\left(\sum_{n \in \Lambda} |c_n(0)|^2 |n|^{2s} \right)^{\frac{1}{2}} & \sim \delta. \label{datasize}
\end{align}
This is possible since the quantity on the left scales like $\lambda^{-1}$ in $\lambda$, and
$\rad^s$ in the parameter $\rad$.
(The issue here is that our choice of frequencies $\Lambda$ only gives us a large factor (that is, $\frac{K}{\delta}$) by which
the Sobolev norm of the solution will grow.  If our data is much smaller than $\delta$ in size, the solution's Sobolev norm  will not grow to be larger than $K$.)

It remains to show that we can guarantee,
\begin{align}
\label{toshow}
\left( \sum_{n \in \Z^2} |a_n(\lambda^2 T_0)|^2 |n|^{2s} \right)^{\frac{1}{2}} & \geq K,
\end{align}
where $a(t)$ is the evolution of the data $c(0)$ under the full system \eqref{FNLS}.
We do this by first establishing,
\begin{align}
\label{showfirst}
\left( \sum_{n \in \Lambda} |c_n(\lambda^2 T_0)|^2 |n|^{2s}\right)^{\frac{1}{2}} & \gtrsim K,
\end{align}
and second that,
\begin{align}
\label{showsecond}
\sum_{n \in \Lambda} |c_n(\lambda^2T_0) - a_n(\lambda^2T_0)|^2 |n|^{2s}) & \lesssim 1.
\end{align}
As for \eqref{showfirst}, consider the ratio of this norm of the resonant evolution at time $\lambda^2 T_0$ to the same norm at time 0,
\begin{align*}
{\mathcal{Q}} & := \frac{\sum_{n\in \Z^2} |c_n(\lambda^2 T_0)|^2 |n|^{2s}}
{\sum_{n\in \Z^2} |c_n(0)|^2 |n|^{2s}} \\
& = \frac{ \sum_{i = 1}^N \sum_{n \in \Lambda_i} |b_i^{(\lambda)}(\lambda^2T_0)|^2 |n|^{2s}}
 { \sum_{i = 1}^N \sum_{n \in \Lambda_i} |b_i^{(\lambda)}(0)|^2 |n|^{2s}},
\end{align*}
since $c_n := 0$ when $n \notin \Lambda$.
We use now the notation $S_j := \sum_{n \in \Lambda_j} |n|^{2s}$,
\begin{align*}
{\mathcal{Q}} & = \frac{\sum_{i=1}^N |b_i^{(\lambda)}(\lambda^2T_0)|^2 S_i}
{\sum_{i=1}^N |b_i^{(\lambda)}(0)|^2 S_i} \\
& \gtrsim  \frac{S_{N-2} (1 - \epsilon)}{\epsilon S_1 + \epsilon S_2 +(1 - \epsilon)S_3
+ \epsilon S_4 + \cdots + \epsilon S_N} \\
& = \frac{S_{N-2} (1 - \epsilon)}{S_{N-2} \cdot [\epsilon \frac{S_1}{S_{N-2}} + \epsilon \frac{S_2}{S_{N-2}}
 + (1-\epsilon)\frac{S_3}{S_{N-2}} + \epsilon \frac{S_4}{S_{N-2}} + \cdots + \epsilon + \epsilon
 \frac{S_{N-1}}{S_{N-2}} + \epsilon \frac{S_N}{S_{N-2}}]} \\
& =  \frac{(1-\epsilon) }{(1 - \epsilon)\frac{S_3}{S_{N-2}} + O(\epsilon)}\\
& \gtrsim \frac{K^2}{\delta^2},
\end{align*}
where the last inequality is ensured by Proposition \ref{frequencies} and by choosing $\epsilon \lesssim C(N,K,\delta)$ sufficiently small.

As for the second inequality \eqref{showsecond}, using Approximation 
Lemma \ref{approx} we obtain that
\begin{align}
\label{almostdone}
\sum_{n \in \Lambda} |c_n(\lambda^2T_0) - a_n(\lambda^2T_0)|^2 |n|^{2s}) & \lesssim
\lambda^{-1 - \sigma} \left( \sum_{n \in \Lambda} |n|^{2s} \right)^{\frac{1}{2}}  \leq \frac{1}{2},
\end{align}
by possibly  increasing $\lambda$ and $\rad$, maintaining \eqref{datasize}.
Together, the inequalities \eqref{almostdone}, \eqref{close} give us immediately \eqref{showsecond}.

%%%%%%%%%%%%%%%%%%%%%%%%%%%%%%%%%%%%%%%%%%%%%%

\section{Diffusion in the Toy Model}
\label{multi-hop}

%We again remark here that the $H^s$ norm (for some fixed $s>1$) is basically
%$$ \sum_j |b_j(t)|^2 (\sum_{n \in \Lambda_j} |n|^{2s}).$$
%So if we can construct a system of frequencies $\Lambda_1, \ldots, \Lambda_M$ with all the above %%properties and such that
%\begin{equation}\label{norm-explosion}
%\sum_{n \in \Lambda_M} |n|^{2s} \gg \sum_{n \in \Lambda_1} |n|^{2s}
%\end{equation}
%then, using the ``multi-hop'' solution we construct in Section 5,  we will basically have achieved our %%goal of weak turbulence (namely, of establishing $H^s$ norm explosion after long times).
%Note that one can take advantage of the scaling
%$$ b_n(t) \mapsto \frac{1}{\lambda} b_n(\frac{t}{\lambda^2})$$
%in order to make the initial $H^s$ norm as large or as small as needed (though of course with small %%norm one has to wait a lot longer
%for the explosion).

In this section we prove Proposition \ref{diffusion} above, which claims a particular sort of instability
for the system which we call the Toy Model System,
\begin{equation}\label{eq:b-eq}
\partial_t b_j = -i|b_j|^2 b_j + 2i \overline{b_j} (b_{j-1}^2 + b_{j+1}^2).
\end{equation}
This system was derived in the discussion preceding equation \eqref{toymodel} above.
We write $b(t)$ for the vector $(b_j)_{j \in \Z}$ and begin with some general remarks about
the system \eqref{eq:b-eq}.

%First, consider the ODE system obtained by dropping the second term on the
%right side of \eqref{eq:b-eq}
%\begin{equation}
%\label{decoupled}
%\partial_t b_j = -i|b_j|^2 b_j.
%\end{equation}
% This system is completely
%decoupled in $j$ and completely integrable: each $b_j$ rotates independently
%from the others around
%a circle. The goal of this section is to prove the existence of a
%solution of the system \eqref{eq:b-eq} which flows through small neighborhoods of many of these
%invariant circles. Thus, we will show that \eqref{eq:b-eq} is a
%Hamiltonian perturbation of \eqref{decoupled} with solutions with
%nontrivial action drift. In other words, we will show there is Arnold
%diffusion between invariant circles of \eqref{eq:b-eq}.

Note that  \eqref{eq:b-eq} is globally well-posed in $l^2(\Z)$.
To see local well-posedness, observe that the system is of the form
$\partial_t b = T(b,\overline{b},b)$ where $T: l^2 \times l^2 \times l^2 \to l^2$ is a
trilinear form which is bounded on $l^2$.  Local well-posedness then follows from the
Picard existence theorem.  The time of existence obtained by the Picard
theorem depends on the $l^2$ norm of $b$, but one quickly observes that this quantity is conserved.
Indeed, we have
\begin{equation}\label{eq:l2-current}
\begin{split}
 \partial_t |b_j|^2 &= 2 \Re \overline{b_j} \partial_t b_j\\
&= 4\Re i \left[ \overline{b_j}^2 b_{j-1}^2 + \overline{b_j}^2 b_{j+1}^2\right] \\
&= 4 \Im (b_j^2 \overline{b_{j-1}}^2 - b_{j+1}^2 \overline{b_j}^2)
\end{split}
\end{equation}
and hence by telescoping series we obtain the mass conservation law $\partial_t \sum_j |b_j|^2 = 0$.
Note that all these formal computations are justified if $b$ is in $l^2$, thanks
to the inclusion $l^2(\Z) \subset l^4(\Z)$.

Though our analysis won't use it explictly, we note next  that \eqref{eq:b-eq} also enjoys the
conservation of the Hamiltonian
$$ H(b) := \sum_j \frac{1}{4} |b_j|^4 - \Re( \overline{b_j}^2 b_{j-1}^2 ).$$
Indeed, if one rewrites $H(b)$ algebraically in terms of $b$ and $\overline{b}$ as
$$ H(b) = \sum_j \frac{1}{4} b_j^2 \overline{b_j}^2 - \frac{1}{2} \overline{b_j}^2 b_{j-1}^2 - \frac{1}{2} b_j^2 \overline{b_{j-1}}^2$$
then we see from \eqref{eq:b-eq} that
$$ \partial_t b = -2i \frac{\partial H}{\partial \overline{b}};
\quad \partial_t \overline{b} = 2i \frac{\partial H}{\partial b},$$
and thus
$$ \partial_t H(b) = \partial_t b \cdot \frac{\partial H}{\partial b} + \partial_t \overline{b} \cdot \frac{\partial H}{\partial \overline{b}} = 0.$$
Again, it is easily verified that these formal computations can be justified in $l^2(\Z)$.

The system \eqref{eq:b-eq} enjoys a number of symmetries.  Firstly, there is phase invariance
$$ b_j(t) \leftarrow e^{i\theta} b_j(t)$$
for any angle $\theta$; this symmetry corresponds to the $l^2$ conservation.
There is also scaling symmetry
$$ b_j(t) \leftarrow \lambda b(\lambda^2 t)$$
for any $\lambda > 0$, time translation symmetry
$$ b_j(t) \leftarrow b_j(t - \tau)$$
for any $\tau \in \R$ (corresponding to Hamiltonian conservation, of course), and space translation symmetry
$$ b_j(t) \leftarrow b_{j - j_0}(t)$$
for any $j_0 \in \Z$.  Finally, there is time reversal symmetry
$$ b_j(t) \leftarrow \overline{b_j}(-t)$$
and space reflection symmetry
$$ b_j(t) \leftarrow b_{-j}(t).$$
There is also a sign symmetry
$$ b_j(t) \leftarrow \epsilon_j b_j(t)$$
where for each $j$, $\epsilon_j = \pm 1$ is an arbitrary sign.

Next we show that the infinite system \eqref{eq:b-eq} evolving  from $l^2$ data reduces to a
finite system if the data
is supported on only a finite number of modes $b_i$.
Indeed \eqref{eq:l2-current} gives
the crude differential inequality
\begin{align*}
 \partial_t |b_j|^2 &= 2 \Re( \overline{b_j} \partial_t b_j)\\
&= 4 \Re(i \overline{b_j}^2 (b_{j-1}^2 + b_{j+1}^2)) \\
&\leq 4 |b_j|^2
\end{align*}
for any $j \in \Z$.
From Gronwall's inequality we conclude first that if $b$ is an $l^2(\Z)$ solution to \eqref{eq:b-eq}, then
the support of $b$ is nonincreasing in time, i.e. if $b(t_0)$ is  supported on  $I \subseteq \Z$
at some time $t_0 \in \R$, then
$b(t)$ is supported in $I$ for all time.    The time reversal symmetry
 allows us to conclude that in fact the support of $b$ is constant in time.

In particular, if $I$ is finite, then \eqref{eq:b-eq} collapses to a
finite-dimensional ODE, which is obtained from \eqref{eq:b-eq} by
setting $b_j = 0$ for all $j \not \in I$.

We conclude our general remarks here on the dynamics of \eqref{eq:b-eq}
by observing how this evolution, and the assumptions on $\Lambda$ that went into its derivation,  account
for the necessary balance in mass at high and low frequencies
dictated by  the conservation laws \eqref{energyconservation}
and \eqref{massconservation}.  As discussed immediately following Theorem \ref{main}, this balance - i.e. the presence
of both a forward and inverse cascade of mass - is an important complication to constructing any solutions to
\eqref{nls} that carry energy at higher and higher frequencies.
Recall that the Toy Model comes from imposing a host of assumptions on the initial data for the resonant truncation
system \eqref{RFNLS}, which also has conserved mass, energy, and momentum.  Under the assumptions that led to the
Toy Model System, the conserved mass becomes
essentially{\footnote{We are systematically ignoring a harmless multiplicative constant.}}
\begin{equation}
\label{mass} \sum_j |b_j(t)|^2,
\end{equation}
the conserved momentum essentially becomes
\begin{equation}
\label{momentum} \sum_j |b_j(t)|^2 (\sum_{n \in \Lambda_j} n),
\end{equation}
and the conserved energy now essentially becomes
\begin{equation}
\label{energy} \sum_j |b_j(t)|^2 (\sum_{n \in \Lambda_j} |n|^2) + \frac{1}{2} \sum_j |b_j(t)|^4
+ \sum_j |b_j(t)|^2 |b_{j+1}(t)|^2.
\end{equation}
It looks like these three quantities are quite different.  However, one observes the identities
$$ \sum_{n \in \Lambda_j} n = \sum_{n \in \Lambda_{j+1}} n$$
and
$$ \sum_{n \in \Lambda_j} |n|^2 = \sum_{n \in \Lambda_{j+1}} |n|^2$$
for all $1 \leq j < N$.  This simply reflects the fact that in any nuclear family $(n_1,n_2,n_3,n_4)$, one
has $n_1 + n_3 = n_2 + n_4$
and $|n_1|^2 + |n_3|^2 = |n_2|^2 + |n_4|^2$.  Thus the conservation of momentum follows trivially from the conservation of mass,
and the conservation of energy is now equivalent to the conservation of the quartic expression
$$ \frac{1}{2} \sum_j |b_j(t)|^4 + \sum_j |b_j(t)|^2 |b_{j+1}(t)|^2.$$

%\begin{remark}
%\label{NoMomentsRemark}
%The conservation laws \eqref{energyconservation},
%\eqref{massconservation} present a natural obstruction to naive
%scenarios accounting for $H^s$ norm growth. If one imagines a solution
%whose conserved mass moves from low to high frequencies, the kinetic
%energy will grow and violate energy conservation. Similarly, if we
%imagine moving the conserved energy from low to high frequencies, the
%mass will shrink and violate mass conservation. Note that the
%reexpressed conservation laws for solutions of \eqref{toymodel} do not
%involve sums with $j$-dependent summands. Thus, restricting our
%attention to solutions of the toy model ODE removes the
%mass-versus-energy obstructions to high Sobolev norm growth.
%\end{remark}
We turn now to the more specific, unstable  behavior in \eqref{eq:b-eq} which we aim to demonstrate.
Let  $N \geq 1$ be a fixed integer and  let $\Sigma \subset \C^N$ be the unit-mass sphere
$$ \Sigma := \{ x \in \C^N: |x|^2 = 1 \}.$$  Assigning initial data $b(t_0)$ with $b_j(t_0) = 0$ for
 $j \leq 0, j > N$, \eqref{eq:b-eq} generates a group $S(t): \Sigma \to \Sigma$ of smooth flows on the smooth $2N-1$-dimensional compact manifold $\Sigma$,
defined by $S(t) b(t_0) := b(t+t_0)$.
We shall refer to elements $x$ of $\Sigma$
as \emph{points}.

%where the solution $b(t) = (b_1(t),\ldots,b_M(t)) \in \C^M$ takes values in the $2M$-dimensional phase
%space $\C^M$,
%and
%we always adopt the conventions that $b_j=0$ whenever $j \leq 0$ or $j \geq M+1$.

%From the Picard existence theorem we know that this ODE is locally well-posed.  Also, as we
%observed in \eqref{mass},
%the ODE has a conserved \emph{mass}
%$$ |b(t)|^2 := \sum_{j=1}^M |b_j(t)|^2.$$
%This gives an \emph{a priori} bound on the magnitude of $b$, which easily implies that the
%ODE is in fact globally well-posed.

%and hence by Gronwall's inequality, if $b_j$ is zero (resp. nonzero) at one time $t$, then it is zero (resp. nonzero) for all times $t$.
%Thus the support $\{ 1 \leq j \leq M: b_j(t) \neq 0 \}$ is an invariant of the flow.

Define the circles $\T_1,\ldots,\T_N$ by
$$ \T_j := \{ (b_1,\ldots,b_N) \in \Sigma: |b_j| = 1; \quad b_k = 0 \hbox{ for all } k \neq j \}$$
then it is easy to see from the above arguments that the flows $S(t)$ leave
each of the circles $\T_j$ invariant:
$S(t) \T_j = \T_j$.  Indeed for each $j$ we have the following explicit oscillator solutions
to \eqref{eq:b-eq} that traverse the circle $\T_j$:
\begin{equation}\label{tj-soln}
b_j(t) := e^{-i(t+\theta)}; \quad b_k(t) := 0 \hbox{ for all } k \neq j.
\end{equation}
Here $\theta$ is an arbitrary phase.

The main result in this section  is that there is Arnold
diffusion between any of these two circles, for instance
between the third\footnote{As mentioned previously, one could in fact diffuse all
the way from $\T_1$ to $\T_N$ by a simple modification of
the argument, but to avoid some (very minor) notational
issues near the endpoints we shall only go from
$\T_3$ to $\T_{N-2}$.} circle $\T_3$ and the third-to-last circle
$\T_{N-2}$.

\begin{theorem}[Arnold diffusion for \eqref{eq:b-eq}]\label{arnold}  Let $N \geq 6$.  Given any $\eps > 0$, there exists a point $x_3$
within $\eps$ of $\T_3$ (using the usual metric on $\Sigma$), a point $x_{N-2}$ within $\eps$ of $\T_{N-2}$, and a time\footnote{We shall only flow forwards in time here.  However, the time reversal symmetry $b_j(t) \mapsto \overline{b_j}(-t)$ (or the spatial reflection symmetry $b_j(t) \mapsto b_{N+1-j}(t)$)  allows one to obtain analogues of all these results when one evolves backwards in time.  Of course, when doing so, the stable and unstable modes that we describe below shall exchange roles.}
 $t \geq 0$ such that $S(t) x_3 = x_{N-2}$.
\end{theorem}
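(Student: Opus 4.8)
The plan is to realize the orbit as a concatenation of $N-5$ elementary \emph{hops}, the $j$-th hop ($j = 3,4,\ldots,N-3$) transporting almost all of the unit mass from the circle $\T_j$ to the circle $\T_{j+1}$. Each hop is modeled on an explicit heteroclinic orbit living on a two-mode invariant plane, and the real work is to glue these (individually infinite-time) heteroclinic pieces into one genuine finite-time orbit of the full $N$-mode system \eqref{eq:b-eq}.

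\textbf{Step 1 (two-mode skeleton).} For each $j$ the plane $\Pi_j := \{\, b\in\Sigma : b_k = 0 \text{ for } k\notin\{j,j+1\}\,\}$ is flow-invariant, by the support-preservation property proved above. Restricted to $\Pi_j$, the system \eqref{eq:b-eq} has only the two unknowns $b_j,b_{j+1}$ with $|b_j|^2+|b_{j+1}|^2=1$; quotienting by the phase symmetry $b\mapsto e^{i\theta}b$ leaves a one-degree-of-freedom Hamiltonian system (the level curves of $H$ on $\Pi_j$), which is elementary and can be integrated explicitly. Linearizing \eqref{eq:b-eq} about the oscillator solution \eqref{tj-soln}, one finds (in the rotating frame $b_k\mapsto e^{it}b_k$) that $\T_j$ is a hyperbolic saddle in the $b_{j+1}$-coordinate with eigenvalues $\pm\sqrt3$, and likewise $\T_{j+1}$ is a saddle in the $b_j$-coordinate; this yields, inside $\Pi_j$, an explicit heteroclinic orbit $\gamma_j$ leaving $\T_j$ along its unstable direction and limiting onto $\T_{j+1}$ along its stable direction. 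This $\gamma_j$ is the ``slider'' that moves all the mass from site $j$ to site $j+1$ as $t$ runs over $\R$.

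\textbf{Step 2 (why the hops do not concatenate, and the one-hop lemma).} The obstruction is that $\gamma_{j}$ reaches $\T_{j+1}$ with $b_{j+2}\equiv 0$, whereas $\gamma_{j+1}$ can leave $\T_{j+1}$ only if there is a nonzero component along the unstable $b_{j+2}$-direction; so one must shadow $\gamma_j$ but deliberately \emph{miss} $\T_{j+1}$ by a tiny amount, aimed so as to set up the next hop. The technical core is therefore a perturbation/shadowing lemma: given suitable small neighborhoods, any solution entering a neighborhood of $\T_j$ in an ``incoming state'' --- $|b_k|$ negligible for $k<j$, and $b_{j+1}$ small and aligned with the unstable direction of $\gamma_j$ --- lingers near $\T_j$ for a controlled time, then shadows $\gamma_j$, and exits a neighborhood of $\T_{j+1}$ in the corresponding incoming state for hop $j+1$: $|b_k|$ negligible for $k\le j$, $b_{j+1}$ now of order $1$, and a small, \emph{controllable} component excited in $b_{j+2}$. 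Near $\T_j$ the modes $b_{j-1},b_{j+1}$ evolve like two decoupled planar saddles and the remaining modes are even smaller, so this is a $\lambda$-lemma-type statement: stable directions contract, the relevant unstable direction expands, the exit data is to leading order governed by $\gamma_j$, and the newly driven mode $b_{j+2}$ emerges small while its phase and sign can be ``aimed'' using the phase, scaling, time-translation and sign symmetries of \eqref{eq:b-eq} together with the freedom in where along $\gamma_j$ the shadowing starts.

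\textbf{Step 3 (chaining) and the main obstacle.} I would then choose $x_3$ near $\T_3$ with \emph{all} modes $b_3,\ldots,b_{N-2}$ switched on at carefully chosen, very small amplitudes and phases, arranged so that when the wave of mass concentration arrives at $\T_j$ the mode $b_{j+1}$ is already in place, at just the right microscopic size and alignment, to receive and relay it; iterating the one-hop lemma then produces a time $t\ge0$ with $S(t)x_3$ within $\eps$ of $\T_{N-2}$, and a point $x_{N-2}\in\T_{N-2}$ within $\eps$ of it completes the statement. That getting only $\eps$-close is an \emph{open} condition means one never has to land exactly on an invariant manifold; what one does have to solve, simultaneously, are the $N-5$ alignment conditions ``hop $j$ deposits $b_{j+1}$ in the incoming state for hop $j+1$'', which I expect to require a fixed-point / continuity (or topological-degree) argument in the finitely many free parameters carried through the chain. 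I expect \emph{this} --- controlling the accumulation of shadowing errors over $\sim N$ hops (with $N$ large, depending on $K,\delta$) while meeting all the alignment conditions --- to be the genuinely hard part: hyperbolic contraction along stable directions keeps the errors from exploding, but only if the orbit lingers long enough near each $\T_j$, which competes against not letting the ``next'' mode $b_{j+2}$ decay below a usable amplitude; balancing these two effects uniformly along the whole chain, and packaging $N-5$ perturbative steps into one clean selection argument, is the heart of the proof and is precisely the feature that makes the construction Arnold-diffusion-like.
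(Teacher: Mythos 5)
Your outline reproduces the paper's strategy in broad strokes -- explicit slider/heteroclinic solutions between adjacent circles $\T_j$, the observation that one must deliberately miss $\T_{j+1}$ so as to leave a usable seed in the next unstable direction, and the idea of pre-exciting the later modes at microscopic amplitudes so that each arrival finds the next relay already in place -- but the proof itself is not there: everything that makes the theorem hard is deferred to a "one-hop lemma" and a "fixed-point / continuity argument" that you describe but do not establish. The genuine content of the argument is quantitative: near each $\T_j$ the equations for the secondary modes are \emph{not} two decoupled planar saddles, because the stable and unstable components are coupled through terms of the schematic form $\bigO(c_{j\pm1}^{\mp}c_{\neq j\pm1}^2)$, and it is exactly this mixing that threatens to destroy the tiny pre-seeded amplitudes (of size roughly $e^{-2\sqrt{3}T}$) over the time $\sim T$ spent near each circle. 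Handling it requires (i) bootstrap upper bounds on all modes over the approach and departure legs, (ii) refined asymptotics showing the peripheral modes are transported by transfer matrices with $O(1)$ (or $\sigma^{-O(1)}$) bounded inverses, so that the exit values can be \emph{prescribed} by choosing the entry values, and (iii) a Gronwall-type lemma adapted to the nilpotent structure of the $2\times 2$ stable/unstable block, which both controls the polynomial-in-$T$ losses and gives the lower bound needed to invert the aiming map for the next unstable mode. None of this is supplied or even identified in your proposal, and your suggestion to do the aiming "using the phase, scaling, time-translation and sign symmetries" cannot work as stated: scaling leaves the sphere $\Sigma$, and the symmetries do not give enough parameters to meet the alignment conditions -- the paper instead aims by inverting the linearized transfer maps at each hop.

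There is also a structural difference worth noting. You propose to solve all $N-5$ alignment conditions simultaneously by a degree/fixed-point argument in the initial parameters; the paper avoids any such global selection problem by introducing the covering relation between ``targets'' (sets equipped with semimetrics whose weights are tuned powers of $e^{\sqrt{3}T}$ and with uncertainties allowed to grow polynomially in $T$), proving one covering statement per leg (incoming $\covers$ ricochet, ricochet $\covers$ outgoing, outgoing $\covers$ next incoming), and chaining them by transitivity. This converts the accumulation-of-errors problem you correctly flag as the heart of the matter into a local verification at each hop, with the exponential gain $e^{-\sqrt{3}T}$ absorbing all polynomial losses. Your plan is not wrong in spirit, but as written it stops precisely where the proof begins.
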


To state this result more informally, there exist solutions to \eqref{eq:b-eq} of total mass $1$ which are arbitrarily concentrated at the mode $j=3$ at some time, and then arbitrarily concentrated at the mode $j=N-2$ at a later time.

To motivate the theorem let us first observe that when $N=2$ we can easily diffuse from $\T_1$ to $\T_2$.  Indeed in this case we have the explicit ``slider'' solution
\begin{equation}\label{eq:slider}
b_1(t) := \frac{e^{-it}\omega}{\sqrt{1 + e^{2\sqrt{3}t}}}; \quad
b_2(t) := \frac{e^{-it} \omega^2}{\sqrt{1 + e^{-2\sqrt{3}t}}}
\end{equation}
where $\omega := e^{2\pi i/3}$ is a cube root of unity.
This solution approaches $\T_1$ exponentially fast as $t \to -\infty$, and
approaches $\T_2$ exponentially fast as $t \to +\infty$.  One can translate
this solution in the $j$ parameter, and obtain solutions that ``slide''
from $\T_j$ to $\T_{j+1}$ (or back from $\T_{j+1}$ to $\T_j$, if we reverse
time or apply a spatial reflection). This for instance validates the
$N=6$ case of Theorem \ref{arnold}.  Intuitively, the proof of Theorem
\ref{arnold} for higher $N$ should then proceed by concatenating these
slider solutions.  Of course, this does not work directly,
because each solution requires an infinite amount of time to connect
one circle to the next, but it turns out that a suitably perturbed or
``fuzzy'' version of these slider solutions can in fact be glued together.

\begin{figure}[htp]
\centering
\includegraphics[width=12cm]{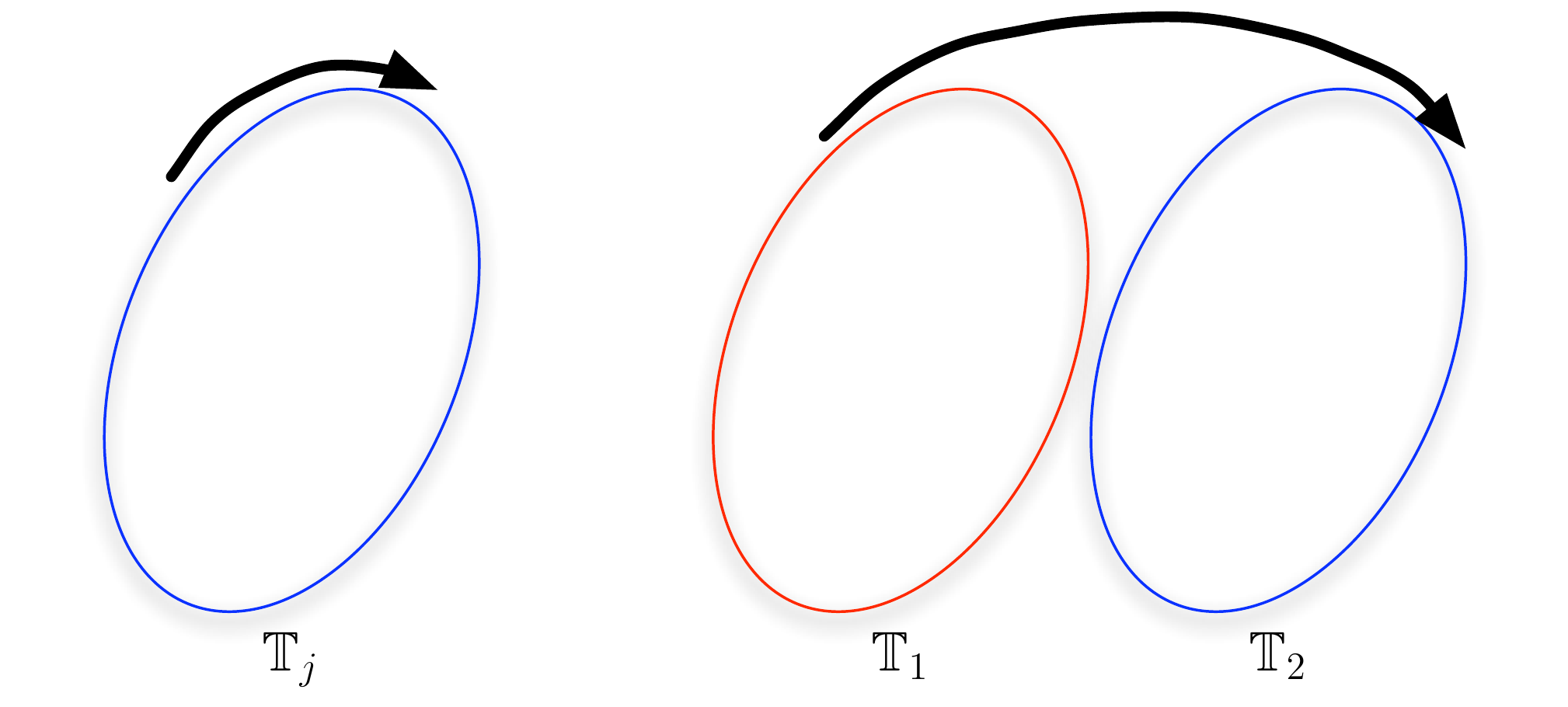}
\caption{Explicit oscillator solution around $\T_1$ and the slider solution from $\T_1$ to $\T_2$}\label{fig:ExplicitFamiles}
\end{figure}

\subsection{General Notation}

We shall use the usual $O()$ notation, but allow the constants in that notation to depend on $N$.  Hence $X = O(Y)$
 denotes $X \leq C Y$ for some constant $C$ that depends on $N$, but is otherwise
 universal.  We use $X \lesssim Y$ as a synonym for $X = O(Y)$.

We will also use \emph{schematic notation}, so that an expression such as $\bigO( f g h )$ will mean some linear combination of
quantities which \emph{resemble} $fgh$ up to the presence of constants and complex conjugation of the terms. Thus for instance $3f \overline{g} h + 2 \overline{fg} h - i fgh$
qualifies to be of the form $\bigO( fgh )$, and $|b|^2 b$ qualifies to be of the form $\bigO( b^3 )$.  We will extend this notation to the case when $f,g,h$ are vector-valued, and allow the linear combination to depend on $N$. Thus for instance if $c = (c_1,\ldots,c_N)$ then $\sum_{j=1}^{N-1} c_j \overline{c_{j+1}}$ would qualify to be of the form $\bigO( c^2 )$.

\subsection{Abstract Overview of Argument}

\begin{quotation}
\emph{Off the expressway, over the river, off the billboard, through the window, off the wall, nothin' but net.}  Michael Jordan \cite{JordanSuperBowl}
\end{quotation}

To prove Theorem \ref{arnold}, one has to engineer initial conditions near $\T_3$ that can ``hit'' the target $\T_{N-2}$ (or more precisely a small neighborhood of $\T_{N-2}$) after some long period of time.  This is difficult to do directly.  Instead, what we shall do is create a number of intermediate ``targets'' between $\T_3$ and $\T_{N-2}$, and show that in a certain sense one can hit any point on each target (to some specified accuracy) by some point on the previous target (allowing for some specified uncertainty in one's ``aim'').  These intermediate trajectories can then be chained together to achieve the original goal.  To describe the strategy more precisely, it is useful to set up some abstract notation.

\begin{definition}[Targets] A \emph{target} is a triple $(M, d, R)$, where $M$ is a subset of $\Sigma$, $d$
is a semi-metric\footnote{A semi-metric is the same as a metric, except that $d(x,y)$ is allowed to degenerate to
zero even when $x \neq y$. The reason we need to deal with semi-metrics is because of the phase symmetry
$x \mapsto e^{i\theta} x$ on $\Sigma$; one could quotient this out and then deal with nondegenerate
metrics if desired.} on $\Sigma$, and $R > 0$ is a radius.  We say that a point $x \in \Sigma$ is
\emph{within} a target $(M,d,R)$ if we have $d(x,y) < R$ for some $y \in M$.  Given two
points $x,y \in \Sigma$, we say that $x$ \emph{hits} $y$, and write $x \mapsto y$, if we have $y = S(t) x$ for some
$t \geq 0$.  Given an initial target $(M_1,d_1,R_1)$ and a final target $(M_2,d_2,R_2)$, we say that $(M_1,d_1,R_1)$
\emph{can cover} $(M_2,d_2,R_2)$, and write $(M_1,d_1,R_1) \covers (M_2,d_2,R_2)$, if for every
$x_2 \in M_2$ there exists an $x_1 \in M_1$, such that for any point $y_1 \in \Sigma$ with $d(x_1,y_1) < R_1$ there
exists a point $y_2 \in \Sigma$ with $d_2(x_2,y_2) < R_2$ such that $y_1$ hits $y_2$.
\end{definition}

\begin{remark} One could eliminate the radius parameter $R$ here by replacing each target $(M, d, R)$ with the equivalent target $(M,d/R,1)$, but this seems to make the concept slightly less intuitive (it is like replacing all metric balls $B_d(x,R)$ with unit balls $B_{d/R}(x,1)$ in a renormalized metric).
\end{remark}

\begin{remark}\label{shooting}
The notion of covering may seem somewhat complicated (involving no less than five quantifiers!), but it can be summarized as follows.  The assertion $(M_1,d_1,R_1)\covers
(M_2,d_2,R_2)$ is a guarantee that one can hit any point in the final target $M_2$ that one desires - within an accuracy of $R_2$ in the $d_2$ metric - by ``aiming'' at some well-chosen point in the initial target $M_1$ and then evolving by the flow, even if one's ``aim'' is a little uncertain (by an uncertainty of $R_1$ in the $d_1$ metric).
Thus the concept of covering is simultaneously a notion of surjectivity
(that the flowout of $M_1$ contains $M_2$ in some approximate sense) and a notion of
stability (that small perturbations of the initial state lead to small perturbations
in the final state). See also Figure \ref{fig:m1_covers_m2}.

\end{remark}

%\textbf{A picture would be appropriate here - TT.}
\begin{figure}[htp]
\centering
\includegraphics[width=12cm]{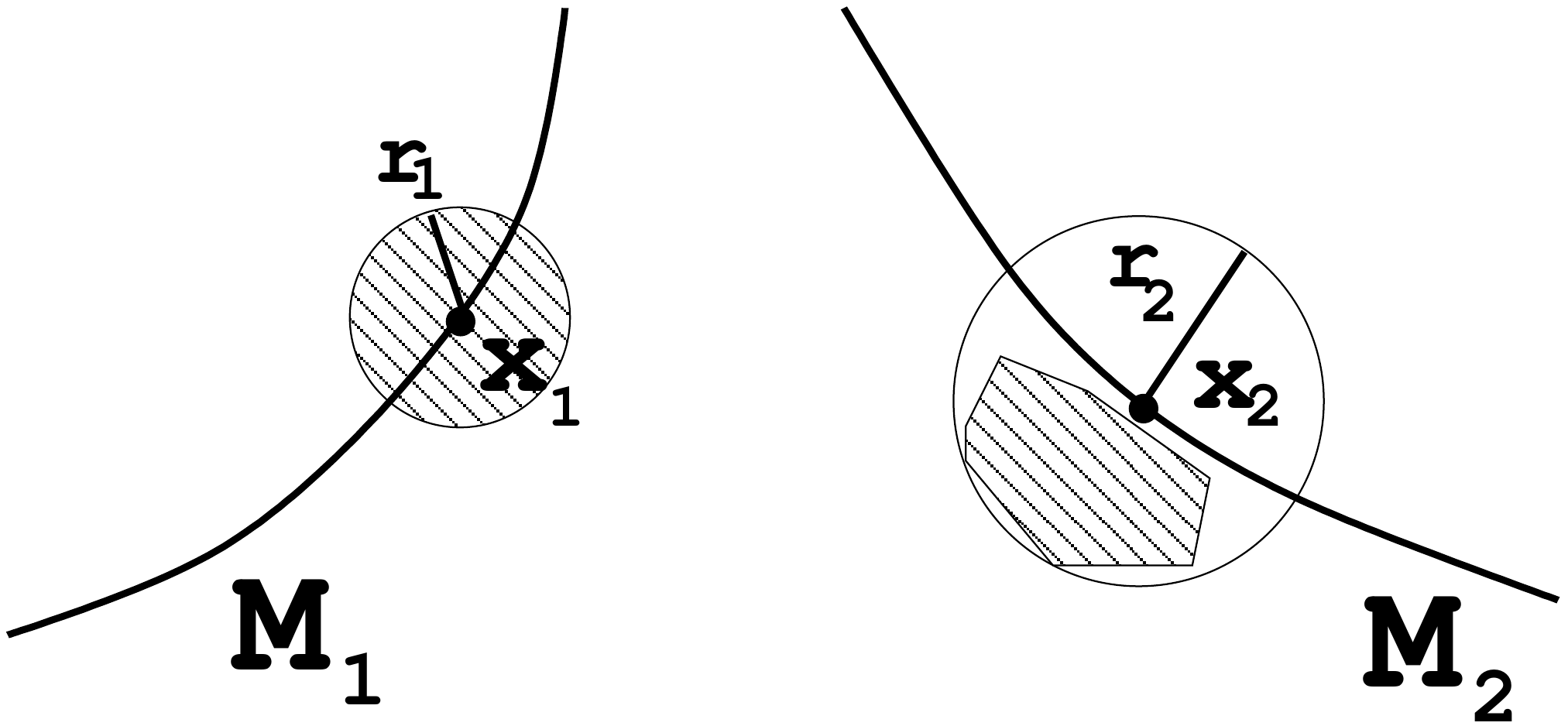}
\caption{$M_1 \covers M_2$. The shaded disk of radius $r_1$
  around $x_1$ flows out to the shaded area near $M_2$ inside a disk of
radius $r_2$ around $x_2$.}\label{fig:m1_covers_m2}
\end{figure}
%i messed with textwidth here
%\begin{figure*}[t]
%\centering
%\includegraphics[width=1.0\textwidth]{col1.pdf}
%\caption{Test}
%\end{figure*}
%\begin{figure}[htbp]
%\begin{center}
%\ \psfig{file=fig_one_diffusion.eps}
%\end{center}
%\caption{$M_1 \covers M_2$}
%\label{m1_covers_m2}
%\end{figure}

%\begin{figure}[htp]
%\centering
%\includegraphics[width=12cm]{ODFlow}
%\caption{The flow from nearby the origin point $O$ to the destination point $D$. The ``clouds'' represent small balls in the
%semi-metrics.}\label{fig:ODFlow}
%\end{figure}

Informally, one can think of a target $(M,d,R)$ as a ``fuzzy'' set $\{ x + O_d(R): x \in M \}$, where $O_d(R)$ represents some ``uncertainty'' of extent $R$ as measured in the semi-metric $d$.  Thus, for instance, if we were working in $\R^2$ with the usual metric $d$, and $M$ was a rectangle
$\{ (x_1,x_2): |x_1| \leq r_1, |x_2| \leq r_2 \}$ then one might describe the target $(M,d,R)$ somewhat schematically as
$$ \{ ( O(r_1) + O(R), O(r_2) + O(R)) \}$$
where the first term in each coordinate represents the parameters of the set (basically, they describe the points that one can ``aim'' at), and
the second term in each coordinate represents the uncertainty of the set (this describes the inevitable error that causes the actual location of
the solution to deviate from the point that one intended to hit).  This schematic notation may be somewhat confusing and we shall reserve it for informal discussions only.

One of the most important features of the covering concept for us is its transitivity.

\begin{lemma}[Transitivity]\label{trans} If $(M_1,d_1, R_1)$, $(M_2,d_2, R_2)$,
$(M_3,d_3, R_3)$ are targets such that $(M_1, d_1, R_1) \covers
(M_2,d_2, R_2)$ and $(M_2,d_2, R_2)  \covers(M_3,d_3, R_3)$,
then $(M_1,d_1, R_1)  \covers (M_3,d_3, R_3)$.
\end{lemma}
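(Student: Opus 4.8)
The plan is to simply chain the two covering hypotheses together, being careful about the order in which the various points are selected so that the nested quantifiers line up correctly. The one structural fact that makes everything work is that the relation $\mapsto$ (``hits'') is transitive: since the maps $S(t)$ form a one-parameter group and we only ever flow forward in time, if $y_2 = S(t) y_1$ and $y_3 = S(t') y_2$ with $t, t' \geq 0$, then $y_3 = S(t + t') y_1$ with $t + t' \geq 0$, so $y_1 \mapsto y_3$. This is the only place the group structure of $S(t)$ enters.

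First I would fix an arbitrary point $x_3 \in M_3$; the goal is to exhibit a point $x_1 \in M_1$ witnessing $(M_1, d_1, R_1) \covers (M_3, d_3, R_3)$ at $x_3$. Applying the hypothesis $(M_2, d_2, R_2) \covers (M_3, d_3, R_3)$ to $x_3$ produces a point $x_2 \in M_2$ with the property that every $y_2 \in \Sigma$ satisfying $d_2(x_2, y_2) < R_2$ hits some $y_3 \in \Sigma$ with $d_3(x_3, y_3) < R_3$. Now I would feed this particular $x_2$ into the hypothesis $(M_1, d_1, R_1) \covers (M_2, d_2, R_2)$, which produces a point $x_1 \in M_1$ such that every $y_1 \in \Sigma$ with $d_1(x_1, y_1) < R_1$ hits some $y_2 \in \Sigma$ with $d_2(x_2, y_2) < R_2$. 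I claim this $x_1$ is the required witness.

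To verify the claim, take any $y_1 \in \Sigma$ with $d_1(x_1, y_1) < R_1$. By the choice of $x_1$ there is $y_2 \in \Sigma$ with $d_2(x_2, y_2) < R_2$ and $y_1 \mapsto y_2$. By the choice of $x_2$ there is then $y_3 \in \Sigma$ with $d_3(x_3, y_3) < R_3$ and $y_2 \mapsto y_3$. Transitivity of $\mapsto$ yields $y_1 \mapsto y_3$, which is precisely the conclusion needed for $(M_1, d_1, R_1) \covers (M_3, d_3, R_3)$ at $x_3$. Since $x_3 \in M_3$ was arbitrary, the covering holds.

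There is no genuine obstacle here; the lemma is essentially bookkeeping. The only subtlety worth flagging is the order of dependence: the witness $x_1$ supplied by the first covering hypothesis depends on which point of $M_2$ one is ``aiming'' at, so one must commit to $x_2$ (obtained from the second hypothesis) before invoking the first hypothesis. Getting this order backwards would break the argument.
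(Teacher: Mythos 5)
Your proposal is correct and follows essentially the same argument as the paper: fix $x_3 \in M_3$, obtain $x_2$ from the second covering hypothesis, then $x_1$ from the first, and chain $y_1 \mapsto y_2 \mapsto y_3$ using the group property of the flow to conclude $y_1 \mapsto y_3$. Your remark about committing to $x_2$ before invoking the first hypothesis is exactly the quantifier bookkeeping the paper's proof performs.
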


\begin{figure}[htp]
\centering
\includegraphics[height=8cm]{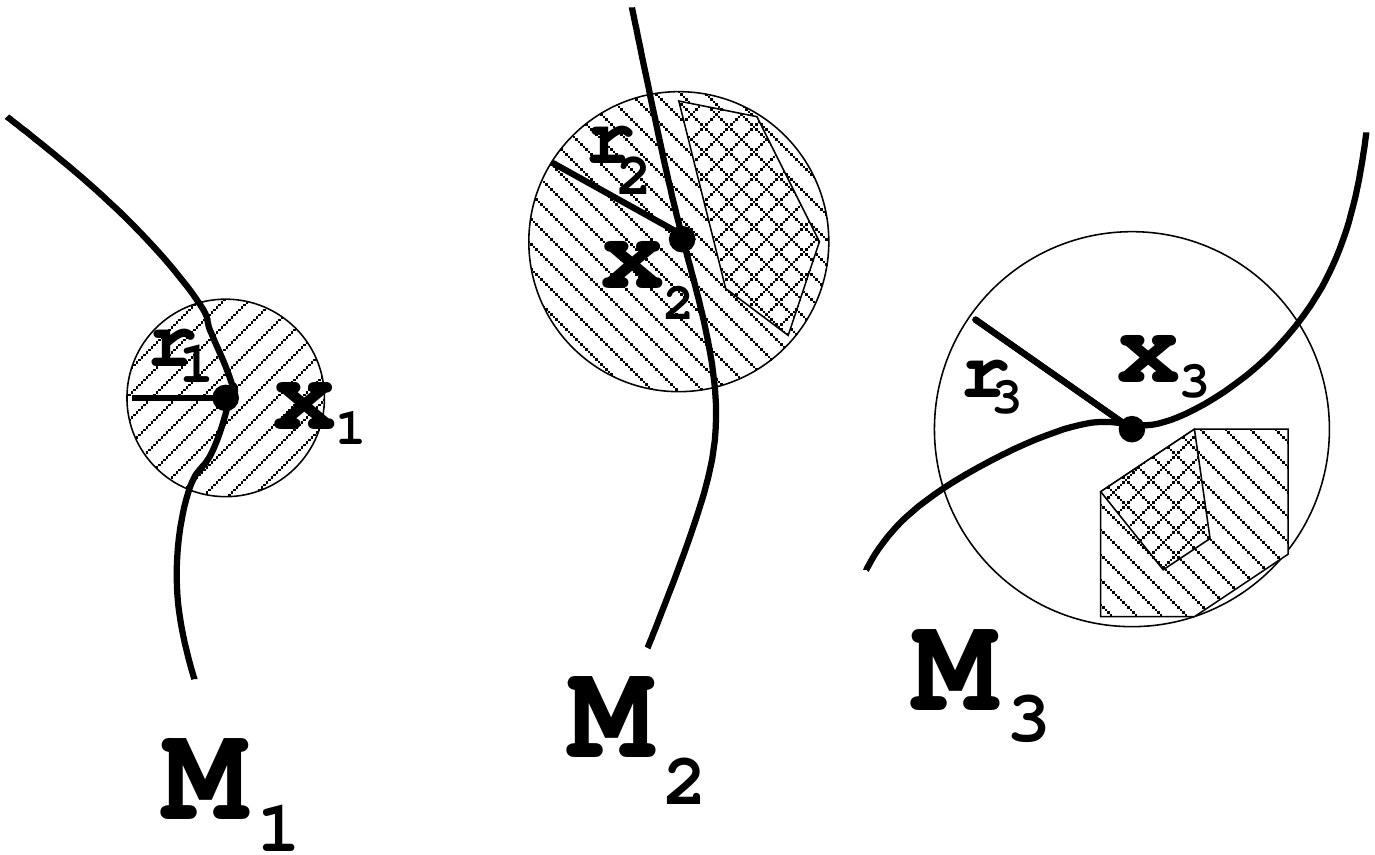}
\caption{$M_1 \covers M_2$, $M_2 \covers M_3$ imply
  $M_1 \covers M_3$. The shaded disk of radius $r_1$ centered $x_1$ flows out
  onto the double shaded portion of the disk of radius $r_2$ centered
  at $x_2$. The disk of radius $r_2$
  centered at $x_2$ maps into the disk of radius $r_3$ near $M_3$. }\label{fig:transitivity}
\end{figure}

\begin{proof} Let $x_3$ be any point in $M_3$.  Since $(M_2,d_2, R_2)  \covers (M_3,d_3, R_3)$, we can find $x_2 \in M_2$ such that for every $y_2$
with $d_2(x_2,y_2) < R_2$, there exists $y_3$ with $d_3(x_3,y_3) < R_3$ such
that $y_2 \mapsto y_3$.  Since $(M_1, d_1, R_1)  \covers (M_2,d_2, R_2)$,
we can find an $x_1 \in M_1$ such that for every $y_1$ with $d_1(x_1,y_1) < R_1$, we can find $y_2$ with $d_2(x_2,y_2) < R_2$ such that $y_1 \mapsto y_2$.  Putting these two together, we thus see that for every $y_1$ with $d_1(x_1,y_1) < R_1$, we can find $y_3$ with $d_2(x_3,y_3) < R_3$
such that $y_1 \mapsto y_2 \mapsto y_3$, which by the group properties of the flow imply that $y_1 \mapsto y_3$, and the claim follows.
\end{proof}

The reader is invited to see how the transitivity of the covering relation follows
intuitively from the interpretation of the concept
given in Remark \ref{shooting}.  See also figure \ref{fig:transitivity}.

%\begin{figure}[htp]
%\centering
%\includegraphics[height=10cm]{TjTargets}
%\caption{The incoming, ricochet and outgoing targets around the torus $\T_j$.}\label{fig:TjTargets}
%\end{figure}

We can now outline the idea of the proof.  For each $j=3,\ldots,N-2$, we will define three targets which lie fairly close to $\T_j$, namely
\begin{itemize}
\item An \emph{incoming target} $(M^-_j,d^-_j, R^-_j)$ (located near the stable manifold of $\T_j$),
\item A \emph{ricochet target} $(M^0_j, d^0_j,R^0_j)$ (located very near $\T_j$ itself), and
\item An \emph{outgoing target} $(M^+_j, d^+_j,R^+_j)$ (located near the unstable manifold of $\T_j$).
\end{itemize}

%\begin{figure}[htp]
%\centering
%\includegraphics[height=8cm]{TjTargets.pdf}
%\caption{The incoming, ricochet and outgoing targets around the torus $\T_j$.}\label{fig:TjTargets}
%\end{figure}

We will then prove the covering relations
\begin{align}
(M^-_j, d^-_j,R^-_j) & \covers (M^0_j, d^0_j,R^0_j) \hbox{ for all } 3 < j \leq N-2 \label{incoming} \\
(M^0_j, d^0_j,R^0_j) & \covers (M^+_j, d^+_j,R^+_j) \hbox{ for all } 3 \leq j < N-2 \label{outgoing} \\
(M^+_j, d^+_j,R^+_j) & \covers (M^-_{j+1}, d^-_{j+1}, R^-_{j+1}) \hbox{ for all } 3 \leq j < N-2 \label{transit}
\end{align}
which by many applications of Lemma \ref{trans} implies covering relation
\begin{equation}\label{long-chain}
(M^0_3, d^0_3,R^0_3)  \covers (M^0_{N-2}, d^0_{N-2}, R^0_{N-2}).
\end{equation}

%\begin{figure}[htp]
%\centering
%\includegraphics[width=12cm]{ConcatenatedSliders.pdf}
%\caption{The covering relations concatenate the sliders.}\label{fig:ConcatenatedSliders}
%\end{figure}

As we shall construct $(M^0_3,d^0_3,R^0_3)$ to be close to $\T_3$ and $(M^0_{N-2},d^0_{N-2},R^0_{N-2})$ to be close to $\T_{N-2}$, Theorem \ref{arnold}
will follow very quickly\footnote{Readers familiar with the original paper of Arnold \cite{arnold} will see
strong parallels here; the notion of one set ``obstructing'' another in that paper is 
analogous to the notion of ``covering'' here, the targets $(M^-_j,d^-_j,R^-_j)$ are analogous
 to ``incoming whiskers'' for the torus $\T_j$, and the targets $(M^+_j,d^+_j,r^+_j)$ are 
 ``outgoing whiskers''.  Unfortunately, it seems that we cannot use the machinery from 
 \cite{arnold} directly, mainly because our invariant manifolds have too small 
 a dimension and so do not have the strong ``transversality'' properties required 
 in \cite{arnold}.  This requires a certain ``thickening'' of these sets using the 
 above machinery of covering of targets.} from 
\eqref{long-chain}.

Our exposition is structured as follows.  In
Section \ref{construct-sec}, we shall define the targets
$(M^-_j,d^-_j,R^-_j)$, $(M^0_j, d^0_j,R^0_j)$, $(M^+_j, d^+_j,R^+_j)$, after construction of some useful local coordinates near each circle $\T_j$ in Section \ref{local-sec}, and then in Section \ref{section:long-chain} see why \eqref{long-chain} implies Theorem \ref{arnold}.  In Section \ref{incoming-sec},
we establish the incoming covering estimate \eqref{incoming}, and in Section \ref{outgoing-sec} we establish the (very similar) outgoing covering estimate.  Finally in Section \ref{transit} we establish the (comparatively easy) transitory covering estimate
\eqref{transit} and hence \eqref{long-chain}.

\subsection{Local Coordinates near $\T_j$}\label{local-sec}

Fix $3 \leq j \leq N-2$.  In this section we shall devise a useful local coordinate system around the circle $\T_j$ that will clarify the
dynamics near that circle, and will motivate the choice of targets
$(M^-_j,d^-_j,R^-_j)$, $(M^0_j, d^0_j,R^0_j)$, $(M^+_j, d^+_j,R^+_j)$ involved.

We will assume here that $b_j \neq 0$; as mentioned in the introduction, this constraint is preserved by the flow.  We shall refer to the $b_j$ mode as the \emph{primary} mode, the mode $b_{j-1}$ as the \emph{trailing secondary mode}, the mode $b_{j+1}$ as the \emph{leading secondary mode}, the modes $b_1,\ldots,b_{j-2}$ as the \emph{trailing peripheral modes}, and the modes $b_{j+2},\ldots,b_N$ as the \emph{leading peripheral modes}.
In the vicinity of the circle $\T_j$, it is the primary mode that will have by far the most mass and will thus dominate the evolution.  From \eqref{eq:b-eq} we see that the secondary modes will be linearly forced by the primary mode.  The peripheral modes
will only be influenced by the primary mode indirectly (via its influence on the secondary modes) and their evolution will essentially be trivial.
At this stage there is a symmetry between the leading and trailing modes, but later on we shall break this symmetry when trying to construct
a solution that evolves from the $3$ mode to the $N-3$ mode (requiring one to be far more careful with the leading modes than the trailing ones).

Bearing in mind the phase rotation symmetry $x \mapsto x e^{i\theta}$ we shall select the ansatz
\begin{equation}\label{bctheta}
 b_j = r e^{i\theta}; \quad b_k = c_k e^{i\theta} \hbox{ for } k \neq j
 \end{equation}
where $r$, $\theta$ are real and the $c_k$ are allowed to be complex (again we assume $b_j \neq 0$). In other words, we are
conjugating the secondary and peripheral modes by the phase of the primary mode.
Substituting these equations into \eqref{eq:b-eq} gives
\begin{eqnarray}
%\begin{array}{l}
&&\partial_t c_{j \pm 1} + i c_{j \pm 1} \partial_t \theta = -i|c_{j \pm 1}|^2 c_{j\pm 1} + 2i r^2
 \overline{c_{j\pm 1}}
+ 2 i c_{j \pm 2}^2 \overline{c_{j \pm 1}} \hbox{ for } \pm 1 = +1,-1 \label{sys-1}\\
&&\partial_t \theta = -r^2 + 2 \Re(c_{j-1}^2 + c_{j+1}^2) \label{sys-2}\\
&&\partial_t c_k + i c_k \partial_t \theta= -i|c_k|^2 c_k + 2i \overline{c_k} (c_{k-1}^2 + c_{k+1}^2)
 \hbox{ for } |j-k| \geq 2. \label{sys-3}
%\end{array}
\end{eqnarray}
We have not stated the equation for $r$ (the magnitude of the primary mode)
explicitly, since on $\Sigma$ one can recover $r$ from the other coordinates by the conservation of mass which gives the formula
\begin{equation}\label{rform}
r^2 = 1 - \sum_{k \neq j} |c_k|^2.
\end{equation}
In particular we have the crude estimates
$$ r^2 = 1 - \bigO( c^2); \quad \partial_t \theta = -1 + \bigO( c^2 )$$
where we use \emph{schematic notation}, thus $\bigO(c^2)$ denotes any quadratic combination of the $c_k$ and their conjugates.
We can substitute this into \eqref{sys-3} to obtain an equation for the evolution of the peripheral modes
\begin{equation}\label{ck-eq}
 \partial_t c_k = i c_k + \bigO( c_k c^2 ).
\end{equation}
Now we turn to the secondary modes \eqref{sys-1}.  Fix a sign $\pm$.
From \eqref{rform}, \eqref{sys-2} we have
$$ r^2 = 1 - |c_{j\pm 1}|^2 - \bigO( c_{\neq j\pm 1}^2 ); \quad \partial_t \theta = -1 + |c_{j\pm 1}|^2 + 2 \Re(c_{j\pm 1}^2) + \bigO( c_{\neq j\pm 1}^2 )$$
where we write $c_{\neq j+1} := (c_1,\ldots,c_{j-1},c_{j+2},\ldots,c_N)$ and
$c_{\neq j-1} := (c_1,\ldots,c_{j-2},c_{j+1},\ldots,c_N)$.
Substituting this into \eqref{sys-1} we have
$$ \partial_t c_{j\pm 1} = i c_{j\pm 1} + 2i \overline{c_{j\pm 1}} - 2i |c_{j \pm 1}|^2 c_{j \pm 1} -
2 i \Re(c_{j\pm 1}^2) c_{j\pm 1} - 2i |c_{j\pm 1}|^2 \overline{c_{j\pm 1}} + \bigO( c_{j\pm 1} c_{\neq j\pm 1}^2 ).$$
It is then natural to diagonalize the linear component of this equation by introducing the coordinates
\begin{equation}\label{cj-coord}
c_{j\pm 1} = \omega c_{j \pm 1}^- + \omega^2  c_{j \pm 1}^+.
\end{equation}
where as in \eqref{eq:slider} $\omega := e^{2\pi i/3}$.  One then computes
$$ i c_{j\pm 1} + 2i \overline{c_{j\pm 1}} = - \sqrt{3} \omega c_{j \pm 1}^- + \sqrt{3} \omega^2 c_{j \pm 1}^+$$
and
$$ \Re(c_{j\pm 1}^2) = - \frac{1}{2}|c_{j \pm 1}|^2 + \bigO( c_{j \pm 1}^- c_{j \pm 1}^+ )$$
and thus
\begin{equation}\label{cpm-eq}
 \partial_t c_{j \pm 1} = (1 - |c_{j \pm 1}|^2) ( - \sqrt{3} \omega c_{j \pm 1}^- + \sqrt{3} \omega^2 c_{j \pm 1}^+ )
+ \bigO( c_{j \pm 1} c_{j \pm 1}^- c_{j \pm 1}^+ ) + \bigO( c_{j\pm 1} c_{\neq j\pm 1}^2 ).
\end{equation}
Taking components, we conclude

\begin{proposition}[Local coordinates near $\T_j$]\label{coord}  Let $3 \leq j \leq N-2$, and let $b(t)$ be a solution to \eqref{eq:b-eq} with $b_j(t) \neq 0$.
Define the coordinates $r, \theta$ (primary mode), $c_{j \pm 1}^-, c_{j \pm 1}^+$ (secondary modes), and $c_* := (c_1,\ldots,c_{j-2},c_{j+2},\ldots,c_N)$ (peripheral modes) by \eqref{bctheta}, \eqref{cj-coord}.  Then we have the system of equations
\begin{align}
\partial_t c_{j \pm 1}^- &= -\sqrt{3} c_{j \pm 1}^- + \bigO( c^2 c_{j\pm 1}^- ) + \bigO( c_{j \pm 1}^+ c_{\neq j \pm 1}^2 )\label{ceq-stable}\\
\partial_t c_{j \pm 1}^+ &= \sqrt{3} c_{j \pm 1}^+ + \bigO( c^2 c_{j\pm 1}^+ )  + \bigO( c_{j \pm 1}^- c_{\neq j \pm 1}^2 )\label{ceq-unstable}\\
\partial_t c_* &= i c_* + \bigO( c^2 c_* ).\label{ceq-neither}
\end{align}
Also, the constraint $b_j(t) \neq 0$ is equivalent (via \eqref{rform}) to the condition
\begin{equation}\label{cbound}
|c| < 1.
\end{equation}
\end{proposition}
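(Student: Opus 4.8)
The statement is essentially the end product of the computation carried out in the paragraphs preceding it, so the plan is to organize that computation into a proof and, in particular, to verify that every remainder term falls into one of the advertised schematic classes. First I would substitute the ansatz \eqref{bctheta} into the Toy Model System \eqref{eq:b-eq}. Writing $\partial_t b_j = (\partial_t r + i r\,\partial_t\theta)\,e^{i\theta}$ and using $\overline{b_j}\,b_{j\pm1}^2 = r\,c_{j\pm1}^2\,e^{i\theta}$, the $j$-th equation of \eqref{eq:b-eq} becomes $\partial_t r + i r\,\partial_t\theta = -i r^3 + 2i r(c_{j-1}^2 + c_{j+1}^2)$; comparing imaginary parts gives \eqref{sys-2}, and we simply discard the real part (which is the evolution equation for $r$, never used below). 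For $k = j\pm1$ exactly one neighbour of $b_k$ is the primary mode $b_j = r e^{i\theta}$, and dividing through by $e^{i\theta}$ yields \eqref{sys-1}; for $|k-j|\ge 2$ both neighbours have the form $c_\bullet e^{i\theta}$ and one obtains \eqref{sys-3}.

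Next I would eliminate $r$ by conservation of mass. On $\Sigma$ we have $\sum_k |b_k|^2 = 1$; since $|b_k| = |c_k|$ for $k\neq j$ and $|b_j| = r$, this is exactly \eqref{rform}, i.e. $r^2 = 1 - \sum_{k\neq j}|c_k|^2$. Because $r$ is real and nonnegative, $b_j\neq 0 \iff r^2 > 0 \iff |c| < 1$, which is the constraint \eqref{cbound}. Feeding the crude consequence $r^2 = 1 + \bigO(c^2)$ into \eqref{sys-2} gives $\partial_t\theta = -1 + \bigO(c^2)$; substituting this into \eqref{sys-3} and absorbing the cubic terms $-i|c_k|^2 c_k$ and $2i\overline{c_k}(c_{k-1}^2 + c_{k+1}^2)$ into $\bigO(c^2 c_*)$ produces the peripheral equation \eqref{ceq-neither} (equivalently \eqref{ck-eq}).

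For the secondary modes one must retain the quadratic terms involving $c_{j\pm1}$. Fixing a sign and isolating $c_{j\pm1}$ in \eqref{rform} and \eqref{sys-2} gives $r^2 = 1 - |c_{j\pm1}|^2 - \bigO(c_{\neq j\pm1}^2)$ and $\partial_t\theta = -1 + |c_{j\pm1}|^2 + 2\Re(c_{j\pm1}^2) + \bigO(c_{\neq j\pm1}^2)$, using that the other secondary mode $c_{j\mp1}$ is a component of $c_{\neq j\pm1}$; substituting into \eqref{sys-1} and collecting terms produces the equation recorded just before \eqref{cj-coord}, with the genuinely peripheral interaction $2i\overline{c_{j\pm1}}c_{j\pm2}^2$ swept into an $\bigO(c_{j\pm1}c_{\neq j\pm1}^2)$ remainder. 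Finally I pass to the coordinates \eqref{cj-coord}, writing $c_{j\pm1} = \omega c_{j\pm1}^- + \omega^2 c_{j\pm1}^+$ with $c_{j\pm1}^\pm$ real (the coordinates of $c_{j\pm1}$ in the $\R$-basis $\{\omega,\omega^2\}$ of $\C$, $\omega = e^{2\pi i/3}$ as in \eqref{eq:slider}). Using $\omega^3 = 1$, $1+\omega+\omega^2 = 0$ and $\overline\omega = \omega^2$ one checks the two identities quoted in the text — that the real-linear map $z\mapsto iz + 2i\bar z$ on $\C$ is diagonalized, $ic_{j\pm1} + 2i\overline{c_{j\pm1}} = -\sqrt3\,\omega\, c_{j\pm1}^- + \sqrt3\,\omega^2 c_{j\pm1}^+$ (its eigenvalues being $\mp\sqrt3$ with eigenrays $\R\omega$ and $\R\omega^2$), and $\Re(c_{j\pm1}^2) = -\tfrac12|c_{j\pm1}|^2 + \bigO(c_{j\pm1}^-c_{j\pm1}^+)$ — which collapses the collected equation to \eqref{cpm-eq}. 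Taking the $\omega$- and $\omega^2$-components of \eqref{cpm-eq} then gives \eqref{ceq-stable} and \eqref{ceq-unstable}.

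Conceptually none of this is hard; the only step that needs genuine care is the last one. One has to confirm that \eqref{cj-coord} really diagonalizes the linearization — so that the linearized system at $\T_j$ is diagonal with the two real exponents $\mp\sqrt3$ on $c_{j\pm1}^\mp$ and the neutral exponent $i$ on $c_*$, which is precisely the hyperbolic-plus-elliptic picture exploited throughout Section \ref{multi-hop} — and then, in the schematic bookkeeping, one must check that the cubic remainders split correctly: every cubic term carrying a factor $c_{j\pm1}^-$ (resp. $c_{j\pm1}^+$) is absorbed into $\bigO(c^2 c_{j\pm1}^-)$ (resp. $\bigO(c^2 c_{j\pm1}^+)$) in the corresponding equation, while the cross terms feeding $\partial_t c_{j\pm1}^-$ (resp. $\partial_t c_{j\pm1}^+$) that contain no such factor must be shown to carry instead a factor of the \emph{opposite} secondary variable together with a quadratic in the off-mode block, i.e. to be $\bigO(c_{j\pm1}^+ c_{\neq j\pm1}^2)$ (resp. $\bigO(c_{j\pm1}^- c_{\neq j\pm1}^2)$), matching \eqref{ceq-stable}--\eqref{ceq-unstable}.
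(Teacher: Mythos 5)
Your proposal is correct and takes essentially the same route as the paper, whose ``proof'' of Proposition \ref{coord} is exactly the computation preceding its statement: substitute the ansatz \eqref{bctheta} into \eqref{eq:b-eq} to get \eqref{sys-1}--\eqref{sys-3}, eliminate $r$ via mass conservation \eqref{rform} (which also gives \eqref{cbound}), and diagonalize the linear part with \eqref{cj-coord} to reach \eqref{cpm-eq} and then \eqref{ceq-stable}--\eqref{ceq-neither}. Your bookkeeping of the schematic remainders, in particular splitting $\bigO(c_{j\pm 1} c_{\neq j\pm 1}^2)$ into a part absorbed in $\bigO(c^2 c_{j\pm 1}^{\mp})$ and the cross term $\bigO(c_{j\pm 1}^{\pm} c_{\neq j\pm 1}^2)$, matches the paper's treatment, so there is no gap.
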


\begin{remark}
Note the total disappearance of the primary mode coordinates $r, \theta$.  From a symplectic geometry viewpoint, we have effectively taken the \emph{symplectic quotient} of the state space with respect to the rotation symmetry $x \mapsto e^{i\theta} x$.  The elimination of these (very large) coordinates is conducive to analyzing the evolution of the (much smaller) secondary and peripheral modes accurately.
\end{remark}

When one is near the torus $\T_j$ (which in these coordinates is just the origin $c=0$), we expect the cubic terms $\bigO(c^3)$ in
the above proposition to be negligible.
From the equations \eqref{ceq-stable}, \eqref{ceq-unstable}, \eqref{ceq-neither} we conclude in this regime that the two real-valued modes $c_{j \pm 1}^-$ are linearly stable, the two real-valued modes $c_{j \pm 1}^+$ are linearly unstable (growing like $e^{\sqrt{3} t}$), and the remaining modes $c_*$ are oscillatory (behaving like $e^{it}$).  Observe also that most of the nonlinear interaction on these modes resembles a diagonal linear potential of magnitude $\bigO(c^2)$, indicating that the coupling between these modes is relatively weak (especially when $\bigO(c^2)$ is small); however there are some troublesome interactions
arising from the final terms in the right-hand sides of \eqref{ceq-stable}, \eqref{ceq-unstable} that allow the stable modes to influence the unstable
modes and vice versa (via coupling with other modes).  This causes a certain amount of mixing in the evolution which will require some care to handle, and is directly responsible for the rather inelegant appearance of polynomial factors of $T$ (in addition to the more natural exponential factors) in the analysis of later sections.

For the purpose of obtaining upper bounds for the magnitude of the various modes, the following useful lemma captures the stable nature
of the $c_{j \pm 1}^-$, the unstable nature of the $c_{j \pm 1}^+$, and the oscillatory nature of the $c_*$, provided that the solution stays
close to the circle $\T_j$ (which corresponds in these coordinates to $c=0$) in a certain $L^2$ sense.

\begin{lemma}[Upper bounds]\label{ubound}  Suppose that $[0,t]$ is a time interval on which we have the smallness condition
\begin{equation}\label{cl2-bound}
\int_0^t |c(s)|^2\ ds \lesssim 1.
\end{equation}
Then we have the estimates
\begin{align}
|c_{j \pm 1}^-(t)| &\lesssim e^{-\sqrt{3} t} |c_{j \pm 1}^-(0) | +
\int_0^t e^{-\sqrt{3}(t-s)} |c_{j \pm 1}^+(s)| |c_{\neq j \pm 1}^2(s)|\ ds \label{cminus-bound}\\
 |c_{j \pm 1}^+(t)| &\lesssim e^{\sqrt{3} t} | c_{j \pm 1}^+(0) |
+ \int_0^t e^{\sqrt{3}(t-s)} |c_{j \pm 1}^-(s)| |c_{\neq j \pm 1}(s)|^2\ ds \label{cplus-bound}\\
|c_{j \pm 1}(t)| &\lesssim e^{\sqrt{3} t} |c_{j \pm 1}(0)| \label{ctotal-bound}\\
|c_*(t)| &\lesssim |c_*(0)| \label{cstar-bound}.
\end{align}
\end{lemma}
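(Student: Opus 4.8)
The plan is to read each scalar equation in Proposition~\ref{coord} as a linear ODE of the form (diagonal term) $+$ (diagonal ``potential'' of size $\bigO(|c|^2)$) $+$ (lower-order forcing), integrate it with an integrating factor, and invoke the hypothesis \eqref{cl2-bound} to absorb the potential into harmless multiplicative constants. I would begin with the oscillatory estimate \eqref{cstar-bound}, which is the simplest. Differentiating $|c_*(t)|^2 = \langle c_*, c_*\rangle$ and using \eqref{ceq-neither}, the linear term contributes $2\Re\langle c_*, i c_*\rangle = 2\Re(i|c_*|^2) = 0$, so that $\partial_t |c_*(t)|^2 = \bigO(|c_*(t)|^2 |c(t)|^2)$; Gronwall's inequality together with $\int_0^t |c(s)|^2\,ds \lesssim 1$ then gives $|c_*(t)|^2 \lesssim |c_*(0)|^2$.

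For the stable estimate \eqref{cminus-bound} I would write \eqref{ceq-stable} as $\partial_t c_{j\pm1}^- = (-\sqrt3 + V(t))\,c_{j\pm1}^- + E(t)$, where $V(t)$ collects the diagonal cubic terms $\bigO(c^2 c_{j\pm1}^-)$ (so $|V(t)| \lesssim |c(t)|^2$) and $E(t) = \bigO(c_{j\pm1}^+(t)\,c_{\neq j\pm1}(t)^2)$ is the remaining forcing. Multiplying by the integrating factor $\exp(\sqrt3 t - \int_0^t V(s)\,ds)$ and integrating from $0$ to $t$ yields
\[
e^{\sqrt3 t - \int_0^t V}\, c_{j\pm1}^-(t) = c_{j\pm1}^-(0) + \int_0^t e^{\sqrt3 s - \int_0^s V}\, E(s)\,ds .
\]
Since $\big|\int_0^s V\big| \le \int_0^t |V| \lesssim \int_0^t |c|^2 \lesssim 1$ by \eqref{cl2-bound}, the factors $e^{\pm\int V}$ are bounded above and below by absolute constants, and one reads off \eqref{cminus-bound} after bounding $|E(s)| \lesssim |c_{j\pm1}^+(s)|\,|c_{\neq j\pm1}(s)|^2$. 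The unstable estimate \eqref{cplus-bound} is obtained in exactly the same way from \eqref{ceq-unstable}, using the integrating factor $\exp(-\sqrt3 t - \int_0^t V)$ (so that the exponential now grows).

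Finally, \eqref{ctotal-bound} is deduced by closing up the (a priori coupled) pair \eqref{cminus-bound}, \eqref{cplus-bound}. By \eqref{cj-coord} the real-linear map $(c_{j\pm1}^-,c_{j\pm1}^+)\mapsto c_{j\pm1}$ is a fixed isomorphism, so $|c_{j\pm1}| \sim |c_{j\pm1}^-| + |c_{j\pm1}^+|$; estimating $e^{-\sqrt3(t-s)} \le e^{\sqrt3(t-s)}$ in \eqref{cminus-bound}, adding it to \eqref{cplus-bound}, and setting $h(t) := e^{-\sqrt3 t}\big(|c_{j\pm1}^-(t)| + |c_{j\pm1}^+(t)|\big)$ gives $h(t) \lesssim h(0) + \int_0^t h(s)|c(s)|^2\,ds$, whence $h(t)\lesssim h(0)$ by Gronwall and \eqref{cl2-bound}. (Alternatively, one can run a direct Gronwall argument on $|c_{j\pm1}(t)|$ using that the linear map $v\mapsto iv+2i\overline v$ has eigenvalues $\pm\sqrt3$, hence generates a semigroup of norm $\lesssim e^{\sqrt3 t}$ for $t\ge 0$.) None of these steps is deep once the structure is in place; the only real point of care is the bookkeeping of the schematic $\bigO$ terms and the observation that \eqref{cminus-bound}--\eqref{cplus-bound} are genuinely coupled --- the stable mode forces the unstable one and vice versa through the peripheral modes --- which is why the lemma is stated as a Duhamel-type inequality rather than a closed pointwise bound; for the purpose of upper bounds this coupling is absorbed harmlessly either in the final Gronwall step or in the crude exponential comparison used for \eqref{ctotal-bound}.
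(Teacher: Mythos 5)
Your proposal is correct and follows essentially the same route as the paper: work in the diagonalized coordinates of Proposition \ref{coord}, treat the $\bigO(c^2\,\cdot)$ terms as a potential controlled in $L^1_t$ by \eqref{cl2-bound}, and close with Gronwall (the paper takes absolute values and applies Gronwall to $|e^{\pm\sqrt{3}t}c_{j\pm1}^\mp|$ and $|c_*|$, which is the same computation as your integrating-factor step, and it proves \eqref{ctotal-bound} by summing the two differential inequalities to get $\partial_t(|c_{j\pm1}^-|+|c_{j\pm1}^+|)\le(\sqrt{3}+O(|c|^2))(|c_{j\pm1}^-|+|c_{j\pm1}^+|)$, i.e. your parenthetical alternative). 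The only differences are cosmetic bookkeeping, so there is nothing further to flag.
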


\begin{proof} We take absolute values in \eqref{ceq-stable}, \eqref{ceq-unstable}, \eqref{ceq-neither} and obtain the differential inequalities\footnote{As the quantity being differentiated is only Lipschitz rather than smooth, these inequalities should be interpreted in the appropriate weak sense.}
\begin{align*}
\partial_t |e^{\sqrt{3} t} c_{j \pm 1}^-| &\lesssim  |c|^2 |e^{\sqrt{3} t} c_{j \pm 1}^-|  +
e^{\sqrt{3} t} |c_{j \pm 1}^+| |c_{\neq j \pm 1}| ^2\\
\partial_t |e^{-\sqrt{3} t} c_{j \pm 1}^+| &\lesssim  |c|^2 |e^{-\sqrt{3} t} c_{j\pm 1}^+| +
e^{-\sqrt{3} t} |c_{j \pm 1}^-| |c_{\neq j \pm 1}|^2 \\
\partial_t |c_*| &\lesssim |c|^2 |c_*|.
\end{align*}
The claims \eqref{cminus-bound}, \eqref{cplus-bound}, \eqref{cstar-bound} now follow from Gronwall's inequality.  To obtain \eqref{ctotal-bound},
we take absolute values of \eqref{ceq-stable}, \eqref{ceq-unstable} and sum to obtain
$$ \partial_t (|c_{j \pm 1}^-| + |c_{j \pm 1}^+|) \leq (\sqrt{3} + O( |c|^2 )) (|c_{j \pm 1}^-| + |c_{j \pm 1}^+|) $$
and the claim now follows from Gronwall's inequality again.
\end{proof}

\begin{remark}\label{slider-remark} The slider solution \eqref{eq:slider}, using the $j=1$ coordinates, is simply
$c_2^-(t) = 0, c_2^+(t) = (1 + e^{-2\sqrt{3} t})^{-1/2}$, thus escaping away from $\T_1$ using the unstable component of the $2$-mode.
Viewed instead in the $j=2$ coordinates, it becomes $c_1^-(t) = (1 + e^{2\sqrt{3}t})^{-1/2}, c_1^+(t) = 0$, thus collapsing into $\T_2$
using the stable component of the $1$-mode.  We shall use this solution  to transition from $(M^+_j,d^+_j,R^+_j)$ to $(M^-_{j+1},d^-_{j+1},R^-_{j+1})$.
\end{remark}

\subsection{Construction of the Targets}\label{construct-sec}

\begin{quotation}
\emph{Always be nice to people on the way up, because you'll meet the same people on the way down.} (Wilson Mizner)
\end{quotation}

We now are ready to construct the targets $(M^-_j,d^-_j,R^-_j)$, $(M^0_j, d^0_j,R^0_j)$, $(M^+_j, d^+_j,R^+_j)$.  As it turns out, these sets will lie close to $\T_j$ (and thus away from the coordinate singularity at $b_j = 0$), and will therefore be represented using the modes
$c_{j+1}^\pm, c_{j-1}^\pm, c_*$ from Proposition \ref{coord}.  (The modes $r,\theta$ are also present but will have no impact on our computations.)
Broadly speaking, these targets will demand a lot of control on the leading modes (in order to set up future ``ricochets'' off of subsequent tori $\T_{j+1}$, $\T_{j+2}$, etc.) but will be rather relaxed about the trailing modes (as they will become small and stay small for the remainder of the evolution).

We need a number of parameters.  First, we need for technical reasons an increasing set of exponents
$$ 1 \ll A^0_3 \ll A^+_3 \ll A^-_4 \ll \ldots \ll A^-_{N-2} \ll A^0_{N-2}$$
for sake of concreteness, we will take these to be consecutive powers of $10$ (thus $A^-_3 = 10$, $A^0_3 = 10^2$, and so forth up to
$A^0_{N-2} = 10^{3N-13}$).

Next, we shall need a small parameter
$$ 0 < \sigma \ll 1$$
depending on $N$ and the exponents $A$ (actually one could take $\sigma = 1/100$ quite safely).
This basically measures the distance to $\T_j$ at which the local coordinates become effective, and the linear terms in Proposition \ref{coord} dominate the cubic terms.

For technical reasons, we shall need a set of scale parameters
$$ 1  \ll r^0_{N-2} \ll r^-_{N-2} \ll r^+_{N-3} \ll \ldots \ll r^+_3 \ll r^0_3 $$
where each parameter is assumed to be sufficiently large depending on the preceding parameters and on $\sigma$ and the $A$'s; these parameters represent a certain shrinking of each target from the previous one (in order to guarantee that each target can be covered by the previous).

Finally, we need a very large time parameter
$$ T \gg 1$$
that we shall assume to be as large as necessary depending on all the previous parameters (in particular, we will obtain exponential gains in $T$ that will handle all losses arising from the $A$, $\sigma$, $r$ parameters.)

For each $3 \leq j \leq N-2$, we also need the concatenated coordinates
\begin{align*}
c_{\leq j-2} &:= (c_1,\ldots,c_{j-2}) \in \C^{j-2} \quad \hbox{ (trailing peripheral modes) } \\
c_{\leq j-1} &:= (c_1,\ldots,c_{j-1}) \in \C^{j-1} \quad \hbox{ (trailing modes) } \\
c_{\geq j+1} &:= (c_{j+1},\ldots,c_M) \in \C^{N-j} \quad \hbox{ (leading modes) }\\
c_{\geq j+2} &:= (c_{j+2},\ldots,c_M) \in \C^{N-j-1} \quad \hbox{ (leading peripheral modes) }.
\end{align*}

Very roughly, the targets $(M^-_j,d^-_j,R^-_j)$, $(M^0_j, d^0_j,R^0_j)$, $(M^+_j, d^+_j,R^+_j)$ can be defined in terms of the modes $c$ by Table \ref{roughtable}.  Thus for instance, the $c_{\geq j+2}$ mode of $(M^-_j,d^-_j,R^-_j)$
ranges over arbitrary values of magnitude $O( r_j^-  e^{-2\sqrt{3} T} )$, plus an unavoidable uncertainty of magnitude $O( T^{A_j^-} e^{-3\sqrt{3} T} )$.
As one advances from $(M^-_j,d^-_j,R^-_j)$, to $(M^0_j,d^0_j,R^0_j)$, to $(M^+_j,d^+_j,R^+_j)$, to $(M^-_{j+1},d^-_{j+1},R^-_{j+1})$, and so forth, the uncertainty will increase by a polynomial factor in $T$ (this will be where the $A$ exponents come in), while the size of the manifolds $M$ will shrink somewhat (this will be where the $r$ parameters come in).

It will take time $T$ to flow from $(M^-_j,d^-_j,R^-_j)$ to $(M^0_j,d^0_j,R^0_j)$, and time $T$ from $(M^0_j,d^0_j,R^0_j)$ to $(M^+_j, d^+_j,R^+_j)$.  (On the other hand, we will be able to flow from $(M^+_j,d^+_j,R^+_j)$ to $(M^-_{j+1}, d^-_{j+1},R^-_{j+1})$ in time $O(\log \frac{1}{\sigma})$.)  The reader may wish to verify that this is broadly consistent
with Table \ref{roughtable}, using the heuristics from the previous section that the stable modes $c_{j \pm 1}^-$ should decay by a factor of $e^{-\sqrt{3} T}$ over this time, the unstable modes $c_{j\pm 1}^+$ should grow by the same factor of $e^{\sqrt{3} T}$, and the remaining modes
$c_{\leq j-2}$, $c_{\geq j-2}$ should simply oscillate.

We now give more precise definitions of these objects.

\begin{table}[ht]
\caption{The targets $(M^-_j,d^-_j,R^-_j)$, $(M^0_j, d^0_j,R^0_j)$, $(M^+_j, d^+_j,R^+_j)$. It is the factors of $e^{-\sqrt{3} T}$ which are the most important feature here; the polynomial powers of $T$ in the uncertainties, and the scales $r$ in the main terms, are technical corrections which should be ignored at a first reading, and $\sigma$ should be thought of as a small constant (independent of $T$ or the $r$).  Note that the outgoing target $(M^+_j, d^+_j, R^+_j)$ resembles a shifted version of the incoming target $(M^-_j,d^-_j,R^-_j)$; this will be important in Section \ref{transit-sec}.}\label{roughtable}
\begin{tabular}{|l|l|l|l|}
\hline
Mode & $(M^-_j,d^-_j,R^-_j)$ & $(M^0_j,d^0_j,R^0_j)$ & $(M^+_j,d^+_j,R^+_j)$  \\
\hline
$c_{\leq j-2}$ & $0 + O( T^{A_j^-} e^{-2\sqrt{3} T} )$
               & $0 + O( T^{A_j^0} e^{-2\sqrt{3} T} )$
               & $0 + O( T^{A_j^+} e^{-2\sqrt{3} T} )$\\
\hline
$c_{j-1}^-$    & $\sigma + O( T^{A_j^-} e^{-\sqrt{3} T} )$
               & $0 + O( T^{A_j^0} e^{-\sqrt{3} T} )$
               & $0 + O( T^{A_j^+} e^{-2\sqrt{3} T} )$  \\
\hline
$c_{j-1}^+$    & $0 + O( T^{A_j^-} e^{-4\sqrt{3} T} )$
               & $0 + O( T^{A_j^0} e^{-3\sqrt{3} T} )$
               & $0 + O( T^{A_j^+} e^{-2\sqrt{3} T} )$\\
\hline
$c_{j+1}^-$    & $O( r_j^- e^{-2\sqrt{3} T} )$
               & $0 + O( T^{A_j^0} e^{-3\sqrt{3} T} )$
               & $0 + O( T^{A_j^+} e^{-4\sqrt{3} T} )$ \\
&$\quad + O( T^{A_j^-} e^{-3\sqrt{3} T} )$
&
&\\
\hline
$c_{j+1}^+$    & $O( r_j^- e^{-2\sqrt{3} T} )$
               & $O( \sigma e^{-\sqrt{3} T} )$
               & $\sigma + O( T^{A_j^+} e^{-\sqrt{3} T} )$                      \\
               &$\quad + O( T^{A_j^-} e^{-3\sqrt{3} T} )$
               &$\quad + O( T^{A_j^0} e^{-2\sqrt{3} T} )$
               &\\
\hline
$c_{\geq j+2}$ & $O( r_j^- e^{-2\sqrt{3} T} )$
               & $O( r_j^0 e^{-2\sqrt{3} T} )$
               & $O( r_j^+ e^{-2\sqrt{3} T} )$ \\
&$\quad + O( T^{A_j^-} e^{-3\sqrt{3} T} )$
&$\quad + O( T^{A_j^0} e^{-3\sqrt{3} T} )$
&$\quad + O( T^{A_j^+} e^{-3\sqrt{3} T} )$\\
\hline
$\theta$ & uncontrolled & uncontrolled & uncontrolled\\
\hline
\end{tabular}
\end{table}

\begin{figure}[htp]
\centering
\includegraphics[width=10cm]{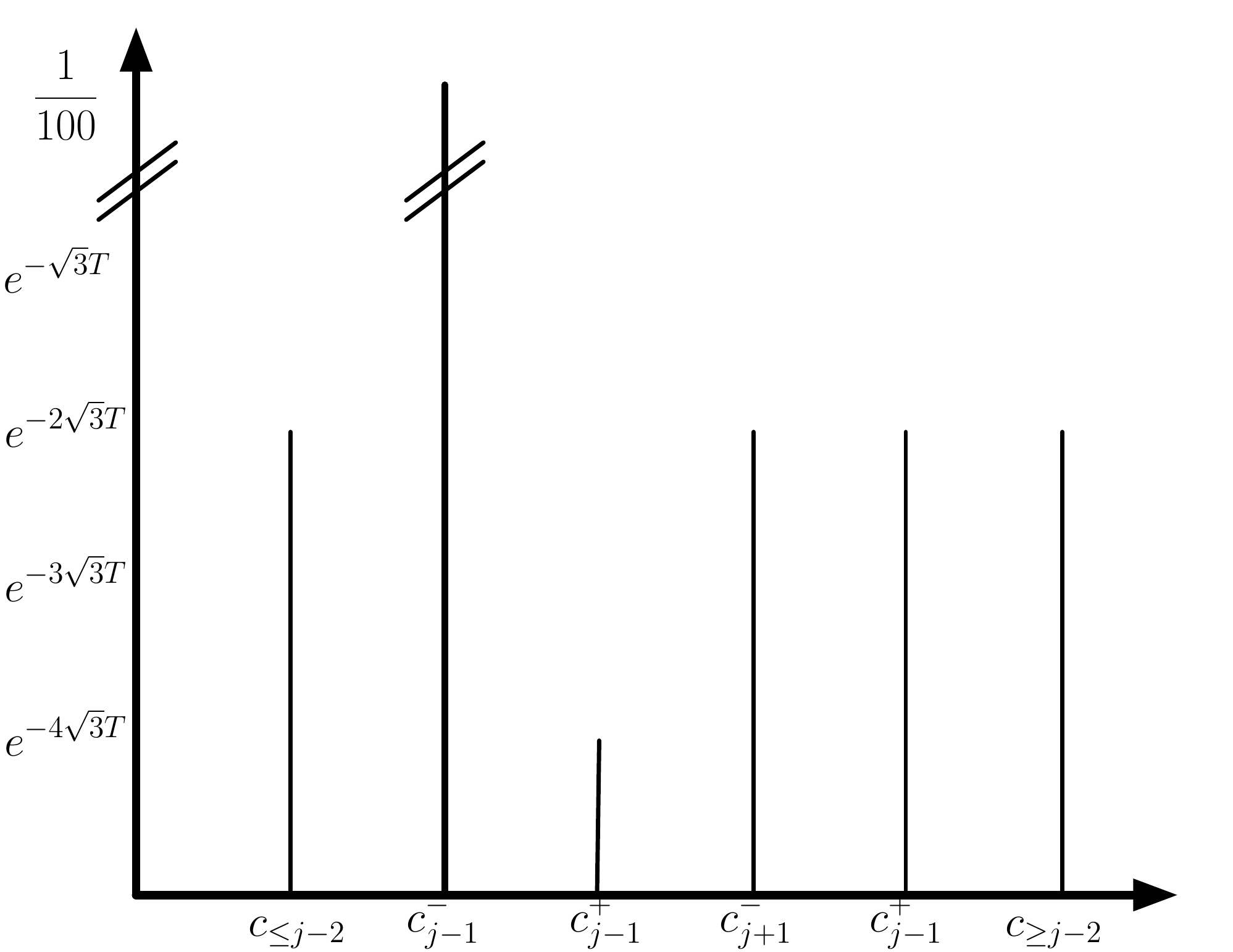}
\caption{An illustration of a configuration of modes within the incoming 
target $(M_j^-, d_j^-,
  R_j^-)$. The illustration ignores the tiny polynomial corrections. The diagonal lines highlight the vertical scale
  compression between the tiny exponential scales and the scale of
  $\sigma \thicksim \frac{1}{100}$.}\label{fig:MjMinusTarget}
\end{figure}

{\bf The Incoming target $(M^-_j,d^-_j,R^-_j)$:\\}

We define the \emph{incoming target} $(M^-_j,d^-_j,R^-_j)$ by setting $M^-_j$ to
be the set of all points whose coordinates obey the relations
\begin{align*}
c_{\leq j-2}, c_{j-1}^+ &= 0\\
c_{j-1}^- &= \sigma \\
|c_{\geq j+1}| &\leq r_j^- e^{-2\sqrt{3} T}
\end{align*}
with uncertainty $R^-_j := T^{A^-_j}$ and with semimetric $d^-_j(x, \tilde x)$ defined by
\begin{align*}
 d^-_j(x, \tilde x) &:= e^{2\sqrt{3} T} |c_{\leq j-2} - \tilde c_{\leq j-2}| + e^{\sqrt{3} T} |c_{j-1}^- - \tilde c_{j-1}^-| + e^{4\sqrt{3} T} |c_{j-1}^+ - \tilde c_{j-1}^+| \\
&+ e^{3\sqrt{3}T} |c_{\geq j+1} - \tilde c_{\geq j+1}|  )
\end{align*}
where $\tilde c$ of course denotes the coordinates of $\tilde x$.  This 
metric is not defined on the set $b_j=0$ (where the local coordinates break 
down), but this is not of importance to us because the 
metric is well defined for all points within  $(M^-_j, d^-_j, R^-_j)$, 
since $T$ is so large.  (If one wished, extend $d^-_j$ in some arbitrary 
fashion to be defined on the remaining portions of $\Sigma$, but this will 
have no impact on the argument.)

Informally, the incoming target has a significant presence (of size roughly $\sigma$) on the trailing stable mode, but is small elsewhere, except of course at the primary mode.

{\bf The Ricochet target $(M^0_j,d^0_j,R^0_j)$:\\}

We define the \emph{ricochet target} $(M^0_j,d^0_j,R^0_j)$ by setting $M^0_j$ to be the set of all points whose coordinates obey the relations
\begin{align*}
c_{\leq j-1},  c_{j+1}^- &= 0\\
|c_{j+1}^+| &\leq r_j^0 e^{-\sqrt{3} T} \\
|c_{\geq j+2}| &\leq r_j^0 e^{-2\sqrt{3} T}
\end{align*}
with uncertainty $R^0_j := T^{A^0_j}$ and with semimetric $d^0_j(x, \tilde x)$ defined by
\begin{align*}
 d^0_j(x, \tilde x) &:= e^{2\sqrt{3} T} |c_{\leq j-2} - \tilde c_{\leq j-2}| + e^{\sqrt{3} T} |c_{j-1}^- - \tilde c_{j-1}^-| +  e^{3\sqrt{3} T} |c_{j-1}^+ - \tilde c_{j-1}^+| + \\
&\quad  +  e^{3\sqrt{3} T} |c_{j+1}^- - \tilde c_{j+1}^-|
+  e^{2\sqrt{3} T} |c_{j+1}^+ - \tilde c_{j+1}^+| +
e^{3\sqrt{3} T} |c_{\geq j+2} - \tilde c_{\geq j+2}|.
\end{align*}

Informally, the ricochet target is small everywhere outside of the primary 
mode, but has its largest presence at the trailing stable mode and the leading unstable mode, with the latter just having overtaken the former.

\begin{figure}[htp]
\centering
\includegraphics[width=10cm]{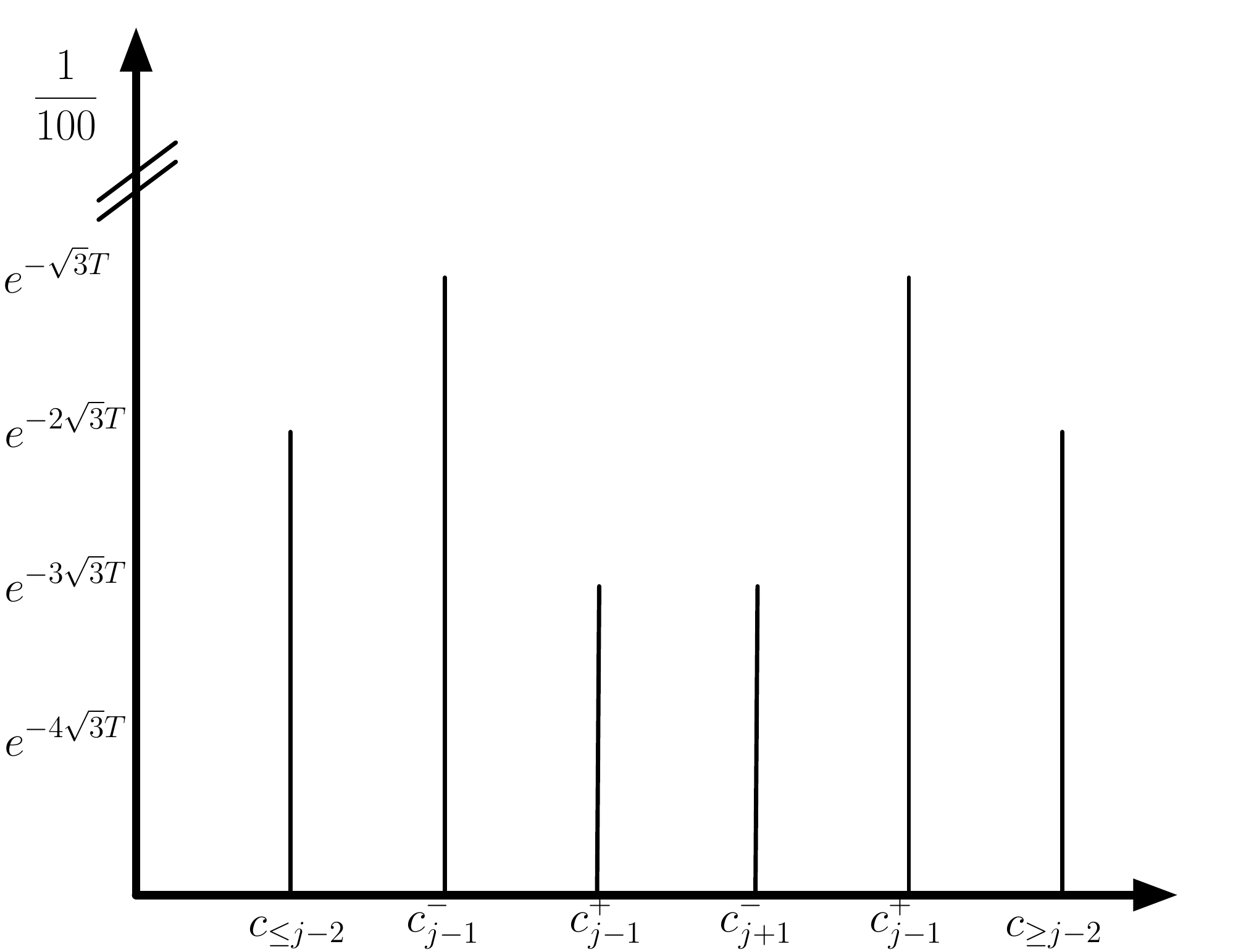}
\caption{An illustration of a configuration of modes  
within the ricochet target $(M_j^0, d_j^0,
  R_j^0)$. Note that all the modes displayed here are extremely small
  relative to $\sigma \thicksim \frac{1}{100}.$}\label{fig:MjRicochetTarget}
\end{figure}

{\bf The Outgoing target $(M^+_j,d^+_j,R^+_j)$:\\}

We define the \emph{outgoing target} $(M^+_j,d^+_j,R^+_j)$ by setting $M^+_j$ to be the set of all points whose coordinates obey the relations
\begin{align*}
c_{\leq j-1} = c_{j+1}^- &= 0\\
c_{j+1}^+ &= \sigma \\
|c_{\geq j+2}| &\leq r_j^+ e^{-2\sqrt{3} T}
\end{align*}
with uncertainty $R^+_j = T^{A^+_j}$
and with semimetric $d^+_j(x, \tilde x)$ defined by
\begin{align*}
d^+_j(x, \tilde x) &:= e^{2\sqrt{3} T} |c_{\leq j-1} - \tilde c_{\leq j-1}| + e^{4\sqrt{3} T} |c_{j+1}^- - \tilde c_{j+1}^-| \\
&\quad + e^{\sqrt{3} T} |c_{j+1}^+ - \tilde c_{j+1}^+|
+ e^{3\sqrt{3} T} |c_{\geq j+2} - \tilde c_{\geq j+2}| ).
\end{align*}

Informally, the outgoing target has a significant presence (of size $\sigma$) on the leading unstable mode and is small at all other secondary and peripheral modes.

\begin{figure}[htp]
\centering
\includegraphics[width=10cm]{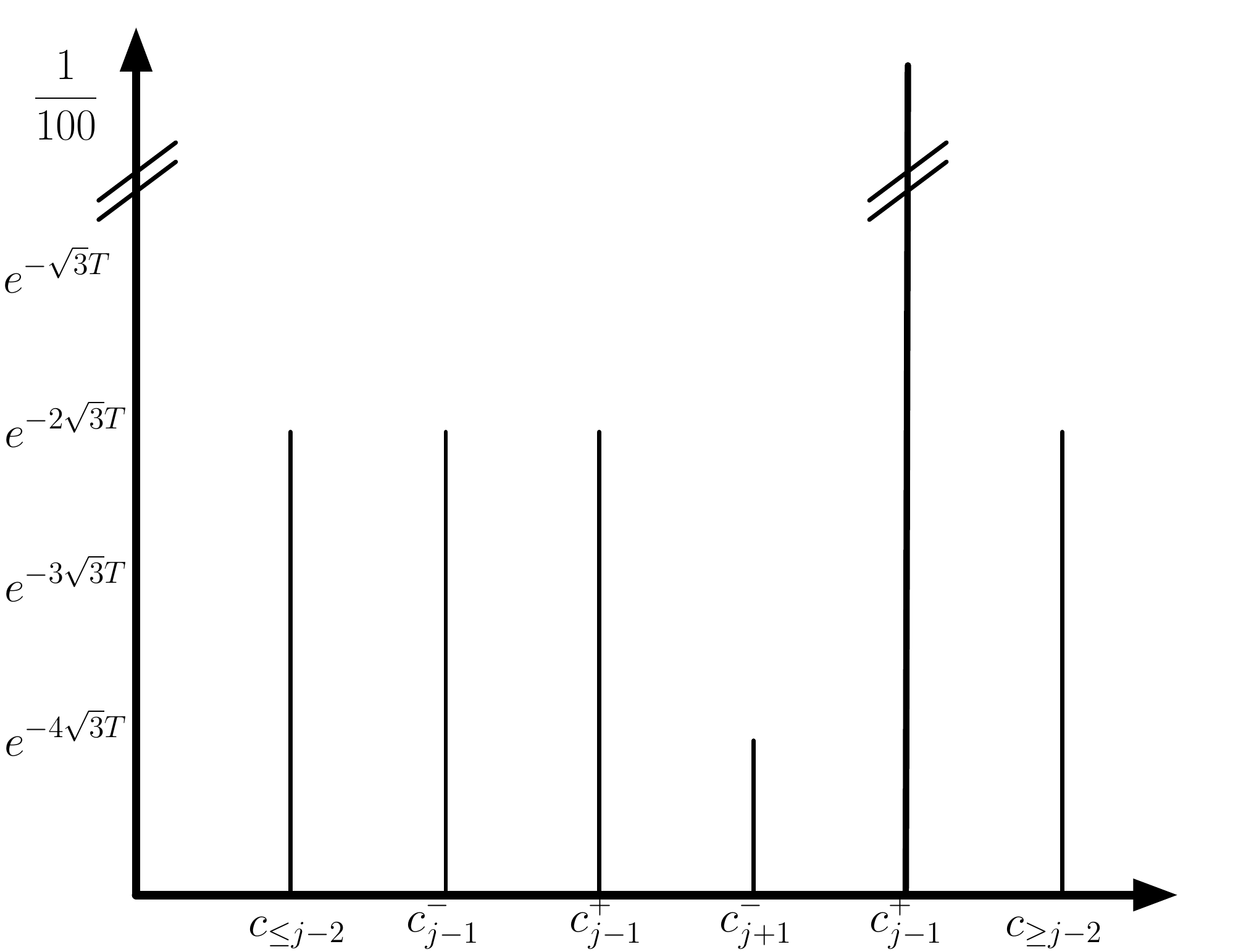}
\caption{An illustration of a configuration of modes within 
the outgoing target $(M_j^+, d_j^+,
  R_j^+)$. }\label{fig:MjOutgoingTarget}
\end{figure}

\subsection{Proof of Theorem \ref{arnold} assuming \eqref{long-chain}} \label{section:long-chain}

From \eqref{long-chain}, we see that there is at least one solution $b(t)$ to \eqref{eq:b-eq} which starts within 
the ricochet target $(M_3^0, d_3^0, R_3^0)$ at some time $t_0$ and ends up within the ricochet target
$(M_{N-2}^0, d_{N-2}^0, R_{N-2}^0)$ at some later time $t_1 > t_0$.  But from the definition of these targets, we thus see
that $b(t_0)$ lies within a distance $O(r_3^0 e^{-\sqrt{3} T} )$ of $\T_3$, while $b(t_1)$ lies within a distance
$O( r_{N-2}^0 e^{-\sqrt{3} T})$ of $\T_{N-2}$.  The claim follows.

It now remains to show the covering relationships \eqref{incoming}, \eqref{outgoing}, \eqref{transit}.  That is the purpose of the
next three sections; the main tool shall be repeated and careful applications of Gronwall-type inequalities.

\subsection{Flowing from the Incoming Target to the Ricochet Target}\label{incoming-sec}

% \textbf{A log-plot of the various modes as a function of time -
%   ignoring the polynomial factors and concentrating on the exponential
%   ones - may be useful in clarifying the dynamics of the various modes
%   - TT.}

Fix $3 \leq j \leq N-2$.
In this section we show that $(M^-_j,d^-_j,R^-_j)$ covers $(M^0_j,d^0_j,R^0_j)$; this is the lengthiest
and most delicate part of the argument.  We shall be able to flow from  $(M^-_j,d^-_j,R^-_j)$ to
$(M^0_j,d^0_j,R^0_j)$
for time exactly $T$.  To do this we will of course need good control on the evolution
of a solution starting within $(M^-_j,d^-_j,R^-_j)$ for this length of time.  The full evolution is summarized in Table \ref{intable}, but
to establish this behavior we shall need to proceed in stages.  Firstly, we need upper bounds on the flow; then we bootstrap these upper bounds to more precise asymptotics; then we use these asymptotics to hit arbitrary locations on the final target $(M^0_j, d^0_j, R^0_j)$ starting from a well-chosen
location on the initial target $(M^-_j, d^-_j, R^-_j)$.  This basic strategy will also be employed in the next two sections, though the technical details are slightly different in each case.

\begin{table}[ht]
\caption{The evolution of a solution which is within 
the incoming target $(M^-_j,d^-_j,R^-_j)$ at time $t=0$.  The $a_{\geq 2}, a_j^+, a_j^-$ are
explicit numbers defined below in \eqref{ajsdefinedhere}, \eqref{aj0}.
The function $g(t)$ and the transfer matrices $G_{j+1}(t)$, $G_{\geq j+2}(t)$ are
explicit objects (depending only on $\sigma$) that will be defined below. 
The configuration of the modes in the incoming target at $t=0$ is 
illustrated in Figure \ref{fig:MjMinusTarget}. Upon
arrival at $t=T$, the modes have adjusted and appear as in Figure
\ref{fig:MjRicochetTarget}. }\label{intable}
\begin{tabular}{|l|l|l|l|}
\hline
Mode & $t=0$ & $0 < t < T$ & $t=T$  \\
\hline
$c_{\leq j-2}$ & $O( T^{A_j^-} e^{-2\sqrt{3} T} )$           & $O( T^{A_j^-} e^{-2\sqrt{3} T} )$                & $O( T^{A_j^-} e^{-2\sqrt{3} T} )$ \\
\hline
$c_{j-1}^-$    & $\sigma + O( e^{-\sqrt{3} T} )$     & $g(t) + O( e^{-\sqrt{3} T} )$            & $O( e^{-\sqrt{3} T} )$  \\
\hline
$c_{j-1}^+$    & $O( T^{A_j^-} e^{-4\sqrt{3} T} )$           & $O( T^{2A_j^-+1} e^{-4\sqrt{3} T} e^{\sqrt{3} t})$  & $O( T^{2A_j^-+1} e^{-3\sqrt{3} T} )$ \\
\hline
$c_{j+1}^-$    & $e^{-2\sqrt{3} T} a^-_{j+1}$
               & $O( T^{A_j^-} e^{-2\sqrt{3} T} e^{-\sqrt{3} t})$
               & $O( T^{A_j^-} e^{-3\sqrt{3} T} )$ \\
               & \quad $+ O( T^{A_j^-}e^{-3\sqrt{3} T} )$ &&\\
\hline
$c_{j+1}^+$    & $e^{-2\sqrt{3} T} a^+_{j+1}$
               & $e^{-2\sqrt{3} T} e^{\sqrt{3} t} G_{j+1}^+(t) a^+_{j+1}$
               & $e^{-\sqrt{3} T} G_{j+1}^+(T) a^+_{j+1}$               \\
&\quad $+ O( T^{A_j^-}e^{-3\sqrt{3} T} )$ & \quad  $+ O( T^{A_j^-+2} e^{-3\sqrt{3} T} e^{\sqrt{3} t} )$ & \quad $+ O(  T^{A_j^-+2} e^{-2\sqrt{3} T} )$ \\
\hline
$c_{\geq j+2}$ & $e^{-2\sqrt{3} T} a_{\geq j+2}$
               & $e^{-2\sqrt{3} T} G_{\geq j+2}(t) a_{\geq j+2}$
               & $e^{-2\sqrt{3} T} G_{\geq j+2}(T) a_{\geq j+2}$ \\
              & \quad $+ O( T^{A_j^-} e^{-3\sqrt{3} T} )$ & \quad $+ O( T^{A_j^-} e^{-3\sqrt{3} T} )$ & \quad $+ O( T^{A_j^-} e^{-3\sqrt{3} T} )$\\
\hline
\end{tabular}
\end{table}

%\begin{figure}[htp]
%\centering
%\includegraphics[width=10cm]{MjMinusTarget.pdf}
%\caption{An illustration of the $t=0$ configuration of the modes
%  within the incoming target $(M_j^-, d_j^-, R_j^-)$.}\label{fig:MjMinusTargetTequalZero}
%
%\end{figure}

%\begin{figure}[htp]
%\centering
%\includegraphics[width=10cm]{MjRicochetTarget.pdf}
%\caption{An illustration of the $t=T$ configuration of the modes
%  arriving to the ricochet target $(M_j^0, d_j^0, R_j^0)$.}\label{fig:MjRicochetTargetTequalZero}
%\end{figure}

We begin with some basic upper bounds on the flow.

\begin{proposition}[Upper bounds, inbound leg]\label{growthprop}
Let $b(t)$ be a solution to \eqref{eq:b-eq} such that $b(0)$ is within $(M^-_j, d^-_j,R^-_j)$.
Let $c(t)$ denote the coordinates of $b(t)$ as in Proposition \ref{coord}.  Then we have the bounds
\begin{align*}
|c_*(t)|       &= O( T^{A_j^-} e^{-2\sqrt{3} T} ) \\
|c_{j-1}^-(t)| &= O( \sigma e^{-\sqrt{3} t} ) \\
|c_{j-1}^+(t)| &= O( T^{2A_j^-+1} e^{-4\sqrt{3} T} e^{\sqrt{3} t} ) \\
|c_{j+1}^-(t)| &= O( r_j^- (1+t) e^{-2\sqrt{3} T} e^{-\sqrt{3} t} ) \\
|c_{j+1}^+(t)| &= O( r_j^- e^{-2\sqrt{3} T} e^{\sqrt{3} t} )
\end{align*}
for $0 \leq t \leq T$.
\end{proposition}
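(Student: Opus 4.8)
The plan is to run the linearized estimates of Lemma \ref{ubound} inside a continuity (bootstrap) argument. First I would unpack the hypothesis that $b(0)$ lies within $(M^-_j,d^-_j,R^-_j)$: dividing the uncertainty $R^-_j = T^{A^-_j}$ by the weights in the semi-metric $d^-_j$ gives the initial values $|c_{\le j-2}(0)| = O(T^{A^-_j}e^{-2\sqrt{3} T})$, $c^-_{j-1}(0) = \sigma + O(T^{A^-_j}e^{-\sqrt{3} T})$, $|c^+_{j-1}(0)| = O(T^{A^-_j}e^{-4\sqrt{3} T})$, and $|c_{\ge j+1}(0)| \le r^-_j e^{-2\sqrt{3} T} + O(T^{A^-_j}e^{-3\sqrt{3} T}) = O(r^-_j e^{-2\sqrt{3} T})$, the last equality using the hierarchy $1 \ll r^-_j \ll T^{A^-_j} \ll T$. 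In particular $|c(0)| = O(\sigma)$, essentially all of it on the trailing stable mode $c^-_{j-1}$; moreover $|b_j(t)|^2 \ge |b_j(0)|^2 e^{-4t}$ with $|b_j(0)|^2 = 1-O(\sigma^2)$, so the coordinates of Proposition \ref{coord} are defined throughout $[0,T]$.

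For the bootstrap, let $t^\ast \in [0,T]$ be the largest time up to which every bound asserted in Proposition \ref{growthprop} holds with its implied constant doubled. On $[0,t^\ast]$ the doubled bounds give $|c(s)|^2 \lesssim \sigma^2 e^{-2\sqrt{3} s} + T^{O(1)}e^{-4\sqrt{3} T}$, hence $\int_0^{t^\ast} |c(s)|^2\,ds \lesssim \sigma^2 + T^{O(1)}e^{-4\sqrt{3} T} \lesssim 1$; thus the smallness hypothesis \eqref{cl2-bound} of Lemma \ref{ubound} holds on all of $[0,t^\ast]$, and the estimates \eqref{cminus-bound}--\eqref{cstar-bound} are available there. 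I would then re-prove each asserted bound with its original constant, feeding in the doubled bootstrap bounds wherever one mode's control is needed to estimate another. The $c_\ast$ and $c^+_{j+1}$ bounds come at once from \eqref{cstar-bound} and \eqref{ctotal-bound} together with the initial data; these also give the ambient bounds $|c_{\ne j-1}(s)|^2 \lesssim (r^-_j)^2 e^{-4\sqrt{3} T}e^{2\sqrt{3} s} + T^{O(1)}e^{-4\sqrt{3} T}$ and $|c_{\ne j+1}(s)|^2 \lesssim \sigma^2 e^{-2\sqrt{3} s} + T^{O(1)}e^{-4\sqrt{3} T}$. For $c^-_{j-1}$ I would use the stable estimate \eqref{cminus-bound}, whose forcing integral contains the factor $|c^+_{j-1}||c_{\ne j-1}|^2$ and is therefore --- even under the doubled bootstrap bounds --- at most $T^{O(1)}e^{-5\sqrt{3} T}$ on $[0,T]$, negligible against $e^{-\sqrt{3} t}|c^-_{j-1}(0)| \lesssim \sigma e^{-\sqrt{3} t}$; this yields $|c^-_{j-1}(t)| \lesssim \sigma e^{-\sqrt{3} t}$. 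For $c^+_{j-1}$ I would use the unstable estimate \eqref{cplus-bound} with this freshly proven decay: the $e^{\sqrt{3}(t-s)}$ kernel multiplied by $|c^-_{j-1}(s)|\,|c_{\ne j-1}(s)|^2 \lesssim \sigma e^{-\sqrt{3} s}\cdot(r^-_j)^2 e^{-4\sqrt{3} T}e^{2\sqrt{3} s}$ has the exponentials in $s$ cancel, leaving $e^{\sqrt{3} t}\int_0^t\,ds$ and hence a harmless factor $t\le T$; combined with $e^{\sqrt{3} t}|c^+_{j-1}(0)| \lesssim T^{A^-_j}e^{-4\sqrt{3} T}e^{\sqrt{3} t}$ this gives $|c^+_{j-1}(t)| = O(T^{2A^-_j+1}e^{-4\sqrt{3} T}e^{\sqrt{3} t})$ (using $r^-_j\ll T$, $\sigma<1$). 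Finally for $c^-_{j+1}$ I would use \eqref{cminus-bound} with $|c^+_{j+1}(s)|\,|c_{\ne j+1}(s)|^2 \lesssim r^-_j e^{-2\sqrt{3} T}e^{\sqrt{3} s}\cdot\sigma^2 e^{-2\sqrt{3} s}$ (plus negligible terms); here the exponentials cancel completely and the surviving $\int_0^t\,ds$ is exactly the source of the factor $(1+t)$ in the statement, giving $|c^-_{j+1}(t)| \lesssim r^-_j(1+t)e^{-2\sqrt{3} T}e^{-\sqrt{3} t}$. Since each re-derived constant strictly improves on the doubled one --- the only discrepancies being powers of $T$ defeated by exponential-in-$T$ gains, plus the fixed hierarchy $1\ll r\ll T^A$ with $\sigma<1$ and $T$ chosen last --- a standard continuity argument forces $t^\ast = T$, which is Proposition \ref{growthprop}.

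The one genuinely delicate point, where I expect to spend the most care, is the mutual dependence of the stable mode $c^-_{j-1}$ and the unstable mode $c^+_{j-1}$: the sharp decay $|c^-_{j-1}(t)|\lesssim \sigma e^{-\sqrt{3} t}$ is needed to keep the forcing of $c^+_{j-1}$ in \eqref{cplus-bound} below $O(T^{2A^-_j+1}e^{-4\sqrt{3} T}e^{\sqrt{3} t})$ --- the crude bound $|c^-_{j-1}|\lesssim\sigma$ would only give $|c^+_{j-1}(T)| = O(\sigma(r^-_j)^2 e^{-2\sqrt{3} T})$, far too large --- while a bound on $c^+_{j-1}$ is in turn what tames the forcing of $c^-_{j-1}$ in \eqref{cminus-bound}. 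The loop is harmless because the round trip from $c^-_{j-1}$ to $c^+_{j-1}$ and back picks up the small factor $|c_{\ne j-1}|^2$ twice (each $|c_{\ne j-1}|$ being $O(e^{-\sqrt{3} T})$ uniformly on $[0,T]$), so the passage from a candidate bound on $c^-_{j-1}$ to the improved one is a contraction for $T$ large; this is precisely what the doubled-constant bootstrap above formalizes. Everything else is bookkeeping: tracking the competing weights $e^{k\sqrt{3} T}$, $e^{k\sqrt{3} t}$, $T^A$, $r$, $\sigma$ and verifying at each step that the error terms are dominated, which is routine since $T$ may be taken as large as we wish relative to all the other parameters.
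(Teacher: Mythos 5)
Your proposal is correct and is essentially the paper's own proof: the same continuity/bootstrap scheme with inflated constants, fed into Lemma \ref{ubound}, with the same mode-by-mode chain of estimates and the same handling of the $c_{j-1}^{-}$/$c_{j-1}^{+}$ coupling (bootstrap bound on the trailing unstable mode to get the sharp decay of the trailing stable mode, then feed that back into \eqref{cplus-bound}). The only blemish is the intermediate display $|c(s)|^2 \lesssim \sigma^2 e^{-2\sqrt{3}s} + T^{O(1)}e^{-4\sqrt{3}T}$, which drops the growing factor $e^{2\sqrt{3}s}$ coming from the leading unstable mode; the corrected bound still integrates to $O(1)$, so the verification of \eqref{cl2-bound} and everything downstream is unaffected.
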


% {\bf Gigliola: We actually also need an equivalent of this proposition for $(M^0_j, d^0_j,R^0_j)$ and
% $(M^+_j, d^+_j,R^+_j)$ as well. But I don't think we need to state them  and even less prove them.}

\begin{proof}  From the hypothesis that $b(0)$ is within $(M^-_j, d^-_j,R^-_j)$, we easily verify that the bounds hold at time $t=0$.
To establish the bounds for later times, we can use the continuity method.  Let $C_0$ be a large constant (depending only on $N$)
to be chosen later, and suppose that
$0 \leq T' \leq T$ is a time for which the bounds
\begin{align*}
|c_*(t)|       &= O( C_0 T^{A_j^-} e^{-2\sqrt{3} T} ) \\
|c_{j-1}^-(t)| &= O( C_0 \sigma e^{-\sqrt{3} t} ) \\
|c_{j-1}^+(t)| &= O( C_0 T^{2A_j^-+1} e^{-4\sqrt{3} T} e^{\sqrt{3} t} ) \\
|c_{j+1}^-(t)| &= O( C_0 r_j^- (1+t) e^{-2\sqrt{3} T} e^{-\sqrt{3} t} ) \\
|c_{j+1}^+(t)| &= O( C_0 r_j^- e^{-2\sqrt{3} T} e^{\sqrt{3} t} )
\end{align*}
are known to hold for all $0 \leq t \leq T'$.  We will then prove the same bounds without the $C_0$ factor for the same 
range of times $t$,
implying that the set of times $T'$ for which the above statements hold is both open and closed; since that set contains $0$, it must then also contain $T$ and we are done.

Our main tool shall of course be Lemma \ref{ubound}.  From our bootstrap hypotheses (and the largeness of $T$) we can control the total magnitude of the modes:
\begin{equation}\label{ctb}
|c(t)| \lesssim C_0 \sigma e^{-\sqrt{3} t} + C_0 r_j^- e^{-2\sqrt{3} T} e^{\sqrt{3} t}
\end{equation}
for all $0 \leq t \leq T'$ (the trailing stable mode is almost always dominant, except near time $T$ where the leading unstable mode begins to compete)
and hence \eqref{cl2-bound} will hold for all $0 \leq t \leq T'$ (assuming $\sigma$ is small depending on $C_0$).  From \eqref{cstar-bound} we
thus have
\begin{equation}\label{cstar-good}
|c_*(t)| \lesssim |c_*(0)| \lesssim T^{A_j^-} e^{-2\sqrt{3} T}
\end{equation}
for all $0 \leq t \leq T'$, which is the desired bound on the peripheral modes $c_*(t)$.  Similarly from \eqref{ctotal-bound} we have
\begin{equation}\label{cj1-good}
|c_{j+1}(t)| \lesssim e^{\sqrt{3} t} |c_{j+1}(0)| \lesssim r_j^- e^{-2\sqrt{3} T} e^{\sqrt{3} t}
\end{equation}
which gives the desired bound on the leading unstable mode $c_{j+1}^+$.  From \eqref{cstar-good}, \eqref{cj1-good} we have
\begin{equation}\label{cje}
 |c_{\neq j-1}(t)| \lesssim e^{-2\sqrt{3} T} (T^{A_j^-} + r_j^- e^{\sqrt{3} t})
\end{equation}
and hence from \eqref{cminus-bound} we have
$$ |c_{j-1}^-(t)| \lesssim e^{-\sqrt{3} t} |c_{j-1}^-(0)| + \int_0^t e^{-\sqrt{3}(t-s)} |c_{j-1}^+(s)| e^{-4\sqrt{3} T} (T^{A_j^-} + r_j^- e^{\sqrt{3} s})^2\ ds.$$
Since $c_{j-1}^-(0) = O(\sigma)$ and $c_{j-1}^+(s) = O( C_0 T^{2A_j^-+1} e^{-4\sqrt{3} T} e^{\sqrt{3} t} )$, we conclude (using the largeness of $T$) that
\begin{equation}\label{cjp}
|c_{j-1}(t)^-| = O( \sigma e^{-\sqrt{3} t} )
\end{equation}
which is the desired bound on the trailing stable mode.  This and \eqref{cstar-good} imply that
$$ |c_{\neq j+1}(t)| \lesssim \sigma e^{-\sqrt{3} t} $$
and then from \eqref{cminus-bound} we have
$$ |c_{j+1}^-(t)| \lesssim e^{-\sqrt{3} t} |c_{j+1}^-(0)|
+ \int_0^t e^{-\sqrt{3}(t-s)} |c_{j+1}^+(s)| \sigma^2 e^{-2\sqrt{3} s}\ ds.$$
Using \eqref{cj1-good} to estimate $|c_{j+1}^+(s)|$, together with the initial bound $|c_{j+1}^-(0)| =  O( r_j^- e^{-2\sqrt{3} T} )$
we obtain
$$ |c_{j+1}^-(t)| \lesssim r_j^- (1+t) e^{-\sqrt{3} t} e^{-2\sqrt{3} T}$$
which is the desired bound on the leading stable mode.  Finally, from \eqref{cplus-bound} and \eqref{cje} we have
$$ |c_{j-1}^+(t)| \lesssim e^{\sqrt{3} t} |c_{j-1}^+(0)| + \int_0^t e^{\sqrt{3}(t-s)} |c_{j-1}^-(s)| e^{-4\sqrt{3} T} (T^{A_j^-} + r_j^- e^{\sqrt{3} s})^2\ ds$$
and thus from \eqref{cjp} and the initial bound $|c_{j-1}^+(0)| = O( T^{A_j^-} e^{-4\sqrt{3} T} )$ we have
$$ |c_{j-1}^+(t)| \lesssim T^{2A_j^-+1} e^{-4\sqrt{3} T} e^{\sqrt{3} t}$$
which is the desired bound on the trailing unstable mode.
\end{proof}

For the rest of this section, the time variable $t$ is assumed to lie in the range $0 \leq t \leq T$.  Let us make some more precise hypotheses on the initial data, namely
\begin{align}
c_{j+1}^-(0) &= e^{-2\sqrt{3} T} a_{j+1}^- + O( T^{A_j^-} e^{-3\sqrt{3} T} )\nonumber\\
c_{j+1}^+(0) &= e^{-2\sqrt{3} T} a_{j+1}^+ + O( T^{A_j^-} e^{-3\sqrt{3} T} )\label{ajsdefinedhere}\\
c_{\geq j+2}(0) &= e^{-2\sqrt{3} T} a_{\geq j+2} + O( T^{A_j^-} e^{-3\sqrt{3} T} ) \nonumber
\end{align}
for some data $a_{j+1}^\pm \in \R$, $a_{\geq j+2} \in \C^{N-j-1}$ of magnitude at most $r_j^-/2$ which we shall choose later.
In order to reach the turnaround set $(M^0_j, d^0_j,R^0_j)$, we will need slightly more precise bounds on the leading modes (and also the stable trailing mode, which is dominant) as follows.

\paragraph{{\bf Improved control on $c_{j-1}^-(t)$}} %can we make that a "subsection"??

We first give a better bound on the trailing stable mode $c_{j-1}^-(t)$, which is the largest of all the modes (other than the primary one, of course).  Observe from Proposition \ref{growthprop} that any cubic term $\bigO( c^3 )$ splits as the sum of a main term of the form
$\bigO( (c_{j-1}^-)^3) )$, plus an error of size at most $O( T^{A_j^-} e^{-2\sqrt{3} T} )$ (say).  Thus we may rewrite \eqref{ceq-stable} somewhat crudely as
$$
\partial_t c_{j - 1}^- = -\sqrt{3} c_{j - 1}^- + \bigO( (c_{j - 1}^-)^3 ) + O( T^{A_j^-} e^{-2\sqrt{3} T} )$$
for some explicit cubic expression $\bigO( (c_{j - 1}^-)^3 )$ of $c_{j-1}^-$.
Now let $g$ be the solution\footnote{This function $g$ can in fact be computed explicitly, in fact it is essentially the function appearing in Remark \ref{slider-remark}, up to a translation in time.  However, we will not need to know the exact formula for it here; the only relevant features for us is that $g$ depends only on $\sigma$ (and possibly $N$) and obeys the bound \eqref{gbound}.} to the corresponding exact (scalar) equation
$$
\partial_t g = -\sqrt{3} g + \bigO( g^3 )$$
with the same initial data $g(0) = \sigma$.  Because $\sigma$ is small, it is easy to establish the decay bound
\begin{equation}\label{gbound}
g(t) = O( \sigma e^{-\sqrt{3} t} )
\end{equation}
(e.g. by the continuity method).  Writing
$c_{j-1}^- = g + E_{j-1}^-$, the error function $E_{j-1}^-(t)$ thus obeys the difference equation
$$ \partial_t E_{j-1}^- = - \sqrt{3} E_{j-1}^- + O( \sigma^2 e^{-2\sqrt{3} t} |E_{j-1}^-| ) + O( |E_{j-1}^-|^3 ) + O( T^{A_j^-} e^{-2\sqrt{3} T} )$$
with initial data $E_{j-1}^-(0) = 0$.  From this equation it is an easy matter (e.g. by the continuity method) to establish the bound
$$ E_{j-1}^-(t) = O( T^{A_j^-+1} e^{-2\sqrt{3} T} )$$
for all $0 \leq t \leq T$.  In other words we have the estimate
\begin{equation}\label{cj-precise}
 c_{j-1}^-(t) = g(t) + O( T^{A_j^-+1} e^{-2\sqrt{3} T} ).
\end{equation}
This and Proposition \ref{growthprop} allow us to refine our bound for $c^2$ and for $c_{\neq j+1}^2$:
\begin{equation}\label{c2-bound}
 \bigO( c^2 ) = \bigO( g^2 ) + O( T^{A_j^-+1} e^{-2\sqrt{3} T} )
\end{equation}
and
\begin{equation}\label{c2-bound-better}
 \bigO( c_{\neq j+1}^2 ) = \bigO( g^2 ) + O( T^{A_j^-+1} e^{-2\sqrt{3} T} e^{-\sqrt{3} t} ).
\end{equation}

\paragraph{\bf Improved control on $c_{\geq j+2}$}  %can we make that a subsection??

Now we control the leading peripheral modes.
Inserting \eqref{c2-bound} into \eqref{ck-eq} we see that
$$ \partial_t c_{\geq j+2} = i c_{\geq j+2} + \bigO( c_{\geq j+2} g^2 ) + O( T^{A_j^-+1} e^{-2\sqrt{3} T} |c_{\geq j+2}| ).$$
We approximate this by the corresponding linear equation
$$ \partial_t u = i u + \bigO( u g^2 )$$
where $u(t) \in \C^{N-j-1}$.  This equation has a fundamental solution
$G_{\geq 2}(t): \C^{N-j-1} \to \C^{N-j-1}$ for all $t \geq 0$, thus $G_{\geq 2}(t) u(0) = u(t)$.
(Again, this solution could be described explicitly since $g$ is itself explicit, but we will not need to do so here).
From \eqref{gbound} we have
\begin{equation}\label{gbound-2}
\int_0^T g^2(t)\ dt = O(1),
\end{equation}
and so an easy application of Gronwall's inequality shows that
\begin{equation}\label{gdither}
 | G_{\geq 2}(t) |, |G_{\geq 2}(t)^{-1} | = O(1).
 \end{equation}
Since $c_{\geq j+2}(0) =  e^{-2\sqrt{3} T} a_{\geq j+2} + O( T^{A_j^-}  e^{-3\sqrt{3} T} )$, we are motivated to use the ansatz
$$ c_{\geq j+2} = e^{-2\sqrt{3} T} G_{\geq 2}(t) a_{\geq j+2} + E_{\geq j+2}.$$
The error $E_{\geq j+2}$ then solves the equation
$$ \partial_t E_{\geq j+2} = iE_{\geq j+2} + \bigO( E_{\geq j+2} g^2 ) + O( T^{A_j^-+1} e^{-2\sqrt{3} T} |c_{\geq j+2}| )$$
with initial data $E_{\geq j+2}(0) = O( T^{A_j^-} e^{-3\sqrt{3} T} )$.  Applying the bound on $c_{\geq j+2}$ from Proposition \ref{growthprop}
we see that
$$ \partial_t |E_{\geq j+2}| = O( |E_{\geq j+2}| |g|^2 ) + O( T^{2A_j^-+1} e^{-4\sqrt{3} T} ).$$
From Gronwall's inequality and \eqref{gbound-2} we conclude that
$$ |E_{\geq j+2}(t)| = O( T^{A_j^-} e^{-3\sqrt{3} T} )$$
for all $0 \leq t \leq T$, and thus
\begin{equation}\label{cj2-bound}
c_{\geq j+2}(t) = e^{-2\sqrt{3} T} G_{\geq 2}(t) a_{\geq j+2} + O( T^{A_j^-} e^{-3\sqrt{3} T} ).
\end{equation}

\paragraph{\bf Improved control on $c_{j+1}$} %can we make that a subsection?

Now we consider the two leading secondary modes $c_{j+1}^+$, $c_{j+1}^-$ simultaneously.
From \eqref{ceq-stable}, \eqref{ceq-unstable}, \eqref{c2-bound}, \eqref{c2-bound-better}, and Proposition \ref{growthprop} we have the system
\begin{align*}
\partial_t c_{j + 1}^- &= -\sqrt{3} c_{j + 1}^- + \bigO( g^2 c_{j+1}^- ) + O( T^{A_j^-+1} e^{-4\sqrt{3} T} )\\
\partial_t c_{j + 1}^+ &= \sqrt{3} c_{j + 1}^+ + \bigO( g^2 c_{j+1}^+ ) + O( T^{A_j^-+1} e^{-4\sqrt{3} T} e^{\sqrt{3} t} ).
\end{align*}
If we make the ansatz
$$ c_{j+1}^- = e^{-2\sqrt{3} T} e^{-\sqrt{3} t} \tilde a_{j+1}^-(t); \quad c_{j+1}^+ = e^{-2\sqrt{3} T} e^{\sqrt{3} t} \tilde a_{j+1}^+(t)$$
to eliminate the constant coefficient terms, then the system becomes
\begin{align*}
\partial_t \tilde a_{j + 1}^- &= \bigO( g^2 \tilde a_{j+1}^- ) + \bigO( g^2 e^{2\sqrt{3} t} \tilde a_{j+1}^+ )  + O( T^{A_j^-+1} e^{-2\sqrt{3} T} e^{\sqrt{3} t} )\\
\partial_t \tilde a_{j + 1}^+ &= \bigO( g^2 e^{-2\sqrt{3} t} \tilde a_{j+1}^- ) + \bigO( g^2 \tilde a_{j+1}^+ ) + O( T^{A_j^-+1} e^{-2\sqrt{3} T} )
\end{align*}
with initial conditions $\tilde a_{j+1}^\pm(0) = a_{j+1}^\pm + O( T^{A_j^-} e^{-\sqrt{3} T} )$.
Writing $a_{j+1} := \left( \begin{array}{l} a_{j+1}^- \\ a_{j+1}^+ \end{array} \right)$ and $\tilde a_{j+1}(t) := \left( \begin{array}{l} \tilde a_{j+1}^-(t) \\ \tilde a_{j+1}^+(t) \end{array} \right)$,
we can write this as
\begin{equation}\label{aj0}
\partial_t \tilde a_{j+1}(t) = A(t) \tilde a_{j+1}(t) + O( T^{A_j^-+1} e^{-2\sqrt{3} T} e^{\sqrt{3} t} ); \quad \tilde a_{j+1}(0) = a_{j+1} + O( T^{A_j^-} e^{-\sqrt{3} T} )
\end{equation}
where $A(t)$ is an explicit real $2 \times 2$ matrix (depending only on $\sigma$ and $t$) which (by \eqref{gbound}) has bounds of the form
$$ A(t) = \sigma^2  \left( \begin{array}{ll}
O( e^{-2\sqrt{3} t} ) & O(1) \\
O( e^{-4\sqrt{3} t} ) & O( e^{-2\sqrt{3} t} )
\end{array} \right ).$$
Unfortunately, the non-decaying coefficient $O(1)$ here prevents a direct application of Gronwall's inequality from being effective.
However, because this coefficient is located in a ``nilpotent'' part of the matrix, we can proceed using the following variant of Gronwall's lemma.

\begin{lemma}[Gronwall-type inequality]\label{nilgronwall}  Let $x(t), y(t)$ be vector-valued functions obeying the differential inequalities
\begin{align*}
|\partial_t x(t)| &\lesssim \delta e^{-\alpha t} |x(t)| + \delta |y(t)| + |F(t)| \\
|\partial_t y(t)| &\lesssim \delta e^{-\beta t} |x(t)| + \delta e^{-\gamma t} |y(t)| + |G(t)|
\end{align*}
for some $\alpha,\beta,\gamma > 0$, some $0<\delta < 1$, all $0 \leq t \leq T$, and some forcing terms $F(t), G(t)$.  Then we have
\begin{align*}
|x(t)| &\lesssim (1+t) |x(0)| + t |y(0)| + \int_0^t (1+t e^{-\beta s}) |F(s)| + t |G(s)|\ ds\\
|y(t)| &\lesssim |x(0)| + |y(0)| + \int_0^t e^{-\beta s} |F(s)| + |G(s)|\ ds
\end{align*}
for all $0 \leq t \leq T$,
where the implicit constants are allowed to depend on $\alpha,\beta,\gamma$.  If $F=G=0$ and $\delta$ is
sufficiently small depending on $\alpha,\beta,\gamma$, we also have the lower bound
$$ |y(t)| \geq \frac{1}{2} |y(0)| - O( |x(0)| ).$$
\end{lemma}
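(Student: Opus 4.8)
The plan is to exploit the fact that the only dangerous coupling term --- the non-decaying contribution $\delta|y(t)|$ in the inequality for $x$ --- sits in a position from which it cannot feed back on itself: $y$ influences $x$ with an $O(1)$ coefficient, but $x$ influences $y$ only through the rapidly decaying weight $\delta e^{-\beta t}$. A direct Gronwall applied to the coupled system would only yield growth of order $e^{O(\delta t)}$, which is far too weak; instead one decouples, bounding $|y|$ essentially on its own (so that it stays comparable to the data), after which $|x|$ can grow at most linearly in $t$. Throughout, the weights $e^{-\alpha s}$, $e^{-\beta s}$, $e^{-\gamma s}$ all have finite total integral, so every Gronwall factor produced below is a constant depending only on $\alpha,\beta,\gamma$ (here we use $\delta<1$). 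As a first step, applying Gronwall's integral inequality to each differential inequality against its own integrating factor gives
\[
|x(t)| \lesssim |x(0)| + \delta \int_0^t |y(s)|\,ds + \int_0^t |F(s)|\,ds
\]
and
\[
|y(t)| \lesssim |y(0)| + \delta \int_0^t e^{-\beta s}|x(s)|\,ds + \int_0^t |G(s)|\,ds .
\]

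The crux is then to close the loop on $y$. Substituting the first bound into the $e^{-\beta s}$-weighted integral $\int_0^t e^{-\beta s}|x(s)|\,ds$ appearing in the second, and using Fubini in the form $\int_0^t e^{-\beta s}\int_0^s h(u)\,du\,ds = \int_0^t h(u)\int_u^t e^{-\beta s}\,ds\,du \le \beta^{-1}\int_0^t e^{-\beta u}h(u)\,du$ to collapse the iterated integrals back to single weighted integrals, one is left with the self-contained inequality
\[
|y(t)| \lesssim |y(0)| + |x(0)| + \int_0^t e^{-\beta s}|F(s)|\,ds + \int_0^t |G(s)|\,ds + \delta^2 \int_0^t e^{-\beta s}|y(s)|\,ds .
\]
Because the weight $\delta^2 e^{-\beta s}$ is integrable, one more application of Gronwall's inequality absorbs the last term and produces exactly the claimed bound on $|y(t)|$.

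Feeding this bound for $|y(s)|$ --- a non-decreasing function of $s$, say $Q(s)$ --- back into the first bound and using $\int_0^t |y(s)|\,ds \lesssim \int_0^t Q(s)\,ds \le t\,Q(t)$, one collects terms (noting that $t$ is constant in the $s$-integral, so $t\int_0^t |G(s)|\,ds = \int_0^t t|G(s)|\,ds$, and $\int_0^t |F| + t\int_0^t e^{-\beta s}|F| = \int_0^t(1+te^{-\beta s})|F|$) to obtain precisely the asserted estimate for $|x(t)|$. For the lower bound, take $F=G=0$; from the second differential inequality, $\partial_t|y(t)| \ge -|\partial_t y(t)| \ge -C\delta e^{-\gamma t}|y(t)| - C\delta e^{-\beta t}|x(t)|$ in the weak sense, so
\[
|y(t)| \ge |y(0)| - C\delta\int_0^t e^{-\gamma s}|y(s)|\,ds - C\delta\int_0^t e^{-\beta s}|x(s)|\,ds .
\]
By the upper bounds already established one has $\sup_{[0,t]}|y| \lesssim |x(0)|+|y(0)|$ and, from the intermediate inequality above with $F=0$, $\int_0^t e^{-\beta s}|x(s)|\,ds \lesssim |x(0)|+|y(0)|$; hence both subtracted terms are $\le C'\delta(|x(0)|+|y(0)|)$. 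Choosing $\delta$ small enough, depending on $\alpha,\beta,\gamma$, that $C'\delta \le \frac14$ absorbs the two $|y(0)|$ contributions into $\frac12|y(0)|$ and leaves $|y(t)| \ge \frac12|y(0)| - O(|x(0)|)$.

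The main obstacle is the step that closes the loop on $y$: one must notice that the $\delta|y|$ coupling in the $x$-equation, although it does not decay, is harmless because $x$ couples back into the $y$-equation only with the decaying weight $e^{-\beta s}$, so that after substitution the self-interaction term for $y$ acquires the integrable weight $\delta^2 e^{-\beta s}$ and can be removed by Gronwall. Everything else --- the three Gronwall applications, the Fubini manipulations, and the observation that all the exponential weights have finite total mass so the Gronwall factors are $O(1)$ --- is routine bookkeeping.
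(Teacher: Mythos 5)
Your proposal is correct and follows essentially the same route as the paper: decouple by applying Gronwall to each inequality separately (the self-interaction weights being integrable), close the loop on $y$ by substituting the $x$ bound and exploiting the decaying weight $e^{-\beta s}$, then feed the resulting $y$ bound back into $x$, and obtain the lower bound from the triangle inequality plus the upper bounds and smallness of $\delta$. The only difference is cosmetic bookkeeping: the paper runs the closing Gronwall on the running supremum $Y(t)=\sup_{0\le s\le t}|y(s)|$ and absorbs a factor of $s$ into $e^{-\beta s/2}$, whereas you use a Fubini exchange to produce the integrable kernel $\delta^{2}e^{-\beta s}$ and apply Gronwall to $|y|$ directly.
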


\begin{proof} Throughout this proof we assume that $t$ lies in the range $0 \leq t \leq T$, and implied constants can depend on
$\alpha,\beta,\gamma$.
From the equation for $\partial_t x(t)$ and the usual Gronwall inequality we have
$$ |x(t)| \lesssim |x(0)| + \int_0^t |y(s)| + |F(s)|\ ds.$$
Writing $Y(t) := \sup_{0 \leq s \leq t} |y(s)|$, we conclude that
\begin{equation}\label{xb}
|x(t)| \lesssim |x(0)| + \int_0^t |F(s)|\ ds + t Y(t).
\end{equation}
On the other hand, from the equation for $\partial_t y(t)$ and Gronwall's inequality we have
$$ |y(t)| \lesssim |y(0)| + \int_0^t e^{-\beta s} |x(s)| + |G(s)|\ ds;$$
inserting \eqref{xb} we conclude
$$ |y(t)| \lesssim |y(0)| + \int_0^t e^{-\beta s} (|x(0)| + \int_0^s F(s')\ ds') + |G(s)|\ ds + \int_0^t e^{-\beta s} Y(s)s \ ds.$$
By Fubini's theorem we have $\int_0^T e^{-\beta s}(|x(0)|+ \int_0^s F(s')\ ds')\ ds \lesssim |x(0)| + \int_0^T e^{-\beta s} F(s)\ ds$.  Taking suprema in $t$ we conclude that
$$ Y(t) \lesssim |y(0)| + |x(0)| + \int_0^t e^{-\beta s} |F(s)| + |G(s)|\ ds + \int_0^t e^{-\frac{\beta}{2} s} Y(s)\ ds$$
and hence by Gronwall's inequality again
$$ |y(t)| \lesssim Y(t) \lesssim |y(0)| + |x(0)| + \int_0^t e^{-\beta s} |F(s)| + |G(s)|\ ds.$$
The upper bounds on $x$ and $y$ now follows from \eqref{xb}.

Now suppose that $F=G=0$ and $\delta$ is small.
%From the equation for $\partial_t y(t)$ and Gronwall's inequality we have
%$$ |y(t)| \geq \frac{3}{4} |y(0)| - O( \delta \int_0^t e^{-\beta s} |x(s)|\ ds );$$
%applying the upper bound for $x$, we obtain the claim if $\delta$ is small enough.
The triangle inequality gives us,
\begin{align*}
|y(t)| & \geq |y(0) - \int_0^t |\partial_s y(s)| ds.
\end{align*}
With the given bound for $|\partial_s y(s)|$ and the upper bounds we just proved for
$y(s), x(s)$, we conclude,
\begin{align*}
|y(t)| & \geq |y(0)| - \int_0^t C \delta e^{-\beta s}|x(s) ds - \int_0^t C \delta e^{-\gamma s}|y(s) ds \\
& \geq |y(0)| - \int_0^t C \delta (1+t)(x(0) + y(0)) ds,
\end{align*}
and the final claim in the Lemma follows by taking $\delta$ small enough.
\end{proof}

Let $G_{j+1}(t)$ be the transfer  matrix associated to $A(t)$, i.e.  $G_{j+1}(t)$ is the real $2 \times 2$ matrix
solving the ODE
$$ \partial_t G_{j+1}(t) = A(t) G_{j+1}(t); \quad G_{j+1}(0) = \id.$$
Then from the Lemma \ref{nilgronwall}, the coefficients of $G_{j+1}$ enjoy the bounds
\begin{equation}\label{gj-transfer}
G_{j+1}(t) = \left( \begin{array}{ll}
O( 1+t ) & O(t) \\
O( 1 )   & O(1)
\end{array} \right ).
\end{equation}
Since $\sigma$ is small, we can also use the last part of Lemma \ref{nilgronwall} and conclude that the coefficient in the bottom right corner has magnitude at least $1/2$.  (This will be important later when we ``invert'' $G_{j+1}(T)$.)

Now we return to \eqref{aj0}, and use the ansatz
$$ \tilde a_{j+1}(t) = G_{j+1}(t) a_{j+1} + E_{j+1}(t)$$
to obtain an equation for the error $E_{j+1}$:
$$
\partial_t E_{j+1}(t) = A(t) E_{j+1}(t) + O( T^{A_j^-+1} e^{-2\sqrt{3} T} e^{\sqrt{3} t} ); \quad E_{j+1}(0) = O( T^{A_j^-} e^{-\sqrt{3} T} )$$
and then by Lemma \ref{nilgronwall} again we obtain the bounds
$$ E_{j+1}(t) = O( T^{A_j^-+2} e^{-2\sqrt{3} T} e^{\sqrt{3} t} )+ O(T^{A_j^-+1} e^{-\sqrt{3} T})  = O( T^{A_j^-+2} e^{-\sqrt{3} T} ).$$
We thus conclude that
\begin{equation}\label{cj1-arg}
 \left( \begin{array}{l}
e^{2\sqrt{3} T} e^{\sqrt{3} t} c_{j+1}^-(t) \\
e^{2\sqrt{3} T} e^{-\sqrt{3} t} c_{j+1}^+(t)
\end{array} \right) = G_{j+1}(t) a_{j+1} + O( T^{A_j^-+2} e^{-\sqrt{3} T}  ).
\end{equation}

\subsection{Hitting the Ricochet Target}

Our estimates are now sufficiently accurate to show that $(M^-_j, d^-_j, R^-_j)$ can cover $(M^0_j, d^0_j, R^0_j)$.
Consider an arbitrary point $x^0_j$ in $M^0_j$, which in coordinates would take the form
\begin{align*}
c_{\leq j-1} = c_{j+1}^- &= 0\\
c_{j+1}^+ = z_{j+1}^+ e^{-\sqrt{3} T} \\
c_{\geq j+2} = z_{\geq j+2} e^{-2\sqrt{3} T}
\end{align*}
for some $z_{j+1}^+ \in \R$, $z_{\geq j+2} \in \C^{N-j-1}$ of magnitude at most $r_j^0$.  We need to locate a point $x^-_j$ in $M^-_j$, which in coordinates
takes the form
\begin{align*}
c_{\leq j-2} = c_{j-1}^+ &= 0\\
c_{j-1}^- &= \sigma \\
c_{j+1}^- &= a_{j+1}^- e^{-2\sqrt{3} T} \\
c_{j+1}^+ &= a_{j+1}^+ e^{-2\sqrt{3} T} \\
c_{\geq j+2} &= a_{\geq j+2} e^{-2\sqrt{3} T}
\end{align*}
for some $a_{j+1}^\pm \in \R$, $a_{\geq j+2} \in \C^{N-j-1}$ of magnitude at most $r_j^-/4$ (say) to be chosen later, such that given any data $c(0)$ which is within $R_j^- = T^{A_j^-}$ of $x^-_j$, thus in coordinates
\begin{align*}
c_{\leq j-2}(0)&= O(T^{A_j^-} e^{-2\sqrt{3} T} )\\
c_{j-1}^-(0) &= \sigma + O( T^{A_j^-} e^{-\sqrt{3} T}) \\
c_{j-1}^+(0) &= O( T^{A_j^-} e^{-4 \sqrt{3} T} ) \\
c_{j+1}^-(0) &= a_{j+1}^- e^{-2\sqrt{3} T} + O( T^{A_j^-} e^{-3\sqrt{3} T} ) \\
c_{j+1}^+(0) &= a_{j+1}^+ e^{-2\sqrt{3} T} + O( T^{A_j^-} e^{-3\sqrt{3} T} )\\
c_{\geq j+2}(0) &= a_{\geq j+2} e^{-2\sqrt{3} T} + O( T^{A_j^-} e^{-3\sqrt{3} T} ),
\end{align*}
the evolution of this data  after time $T$ will lie within $R^0_j = T^{A^0_j}$ of $x^0_j$ in the $d^0_j$ metric.  Thus we aim to show,
\begin{equation}\label{bullet}
\begin{split}
&e^{2\sqrt{3} T} |c_{\leq j-2}(T)| + e^{\sqrt{3} T} |c_{j-1}^-(T)| +  e^{3\sqrt{3} T} |c_{j-1}^+(T)| + e^{3\sqrt{3} T} |c_{j+1}^-(T)| \\
&\quad
+  e^{2\sqrt{3} T} |c_{j+1}^+(T) - e^{-\sqrt{3} T} z_{j+1}^+| +
e^{3\sqrt{3} T} |c_{\geq j+2}(T) - e^{-2\sqrt{3} T} z_{\geq j+2}| < T^{A^0_j}.
\end{split}
\end{equation}
To establish this, we of course apply the bounds obtained in this section.  From Proposition \ref{growthprop} we have
$$ |c_{\leq j-2}(T)| = O( T^{A_j^-} e^{-2\sqrt{3} T} ); \quad
c_{j-1}^-(T) = O( e^{-\sqrt{3} T} ); \quad c_{j-1}^+(T) = O( T^{2A_j^- +1} e^{-3\sqrt{3} T})$$
and hence the contribution of the trailing modes $c_{\leq j-1}$
to \eqref{bullet} will be acceptable (recall that $A_j^0$ is ten times larger than $A_j^-$).
From \eqref{cj2-bound} we have
$$ c_{\geq j+2}(t) = e^{-2\sqrt{3} T} G_{\geq 2}(T) a_{\geq j+2} + O( T^{A_j^-} e^{-3\sqrt{3} T} ).$$
Thus if we set $a_{\geq j+2} := G_{\geq 2}(T)^{-1} z_{\geq j+2}$,
then the contribution of the leading peripheral modes $c_{\geq j+2}$ to \eqref{bullet} is acceptable.
Note that since $|z_{\geq j+2}| \leq r_j^0$, then $|a_{\geq j+2}| \leq r_j^-$
thanks to \eqref{gdither} and the construction of $r_j^-$ large compared to $r_j^0$.

Finally, we need to deal with the leading secondary modes.  According to
Proposition \ref{growthprop} the contribution of $c_{j+1}^-(t)$ to the left
hand side of \eqref{bullet} is acceptable.  As for
$c_{j+1}^+$, from \eqref{cj1-arg} we have
$$
 \left( \begin{array}{l}
e^{3\sqrt{3} T} c_{j+1}^-(T) \\
e^{\sqrt{3} T} c_{j+1}^+(T)
\end{array} \right) = G_{j+1}(T) a_{j+1} + O( T^{A_j^-+2} e^{-\sqrt{3} T}  ).
$$
Now recall that the matrix $G_{j+1}(T)$ has the form \eqref{gj-transfer}, with the bottom right coefficient having magnitude comparable to $1$.
Because of this, and the hypothesis that $|z_{j+1}^+| \leq r^0_j$, one can easily find coefficients $a_{j+1}^-, a_{j+1}^+$ of magnitude at most
$r^-_j$ (which is large compared to $r^0_j$) such that
$$
G_{j+1}(T) \left( \begin{array}{l} a_{j+1}^- \\ a_{j+1}^+ \end{array} \right) =
\left( \begin{array}{l} \ldots \\ z_{j+1}^+ \end{array} \right)$$
where the exact value of the coefficient $\ldots$ is not important to us since we already bounded
$c_{j+1}^-(T)$ above.  We thus have
$$ c_{j+1}^+(T) = e^{-\sqrt{3} T} z_{j+1}^+ + O( T^{A_j^-+2} e^{-2\sqrt{3} T}  )$$
and conclude that the contribution of this term to \eqref{bullet} is also acceptable.  This concludes the proof
that $(M^-_j, d^-_j, r^-_j)$ covers $(M^0_j, d^0_j, r^0_j)$.

\subsection{Flowing from the Ricochet Target to the Outgoing Target}\label{outgoing-sec}

Again we fix $3 \leq j \leq N-2$.
We now show that $(M^0_j, d^0_j, r^0_j)$ covers $(M^+_j, d^+_j, r^+_j)$.  Broadly speaking,
this will resemble a time-reversed version
of the arguments in Section \ref{incoming-sec}, though there are a number of technical
differences.  On the one hand, the trailing secondary modes
are now much less important and do not require as delicate a treatment as in Section \ref{incoming-sec}.  On the other hand, the time reversal changes the role of $\sigma$; instead of starting with a stable trailing mode at size $\sigma$ and ensuring that it decays in a controlled manner, we now are starting with an unstable leading mode $c^+_{j+1}$ of small size (like $e^{-\sqrt{3} T}$)
 and growing it so that it reaches $\sigma$ almost exactly at time $T$.

\begin{table}[ht]
\caption{The evolution of a solution which is within the ricochet target 
$(M^0_j,d^0_j,R^0_j)$
at time $t=0$, and with a well-chosen (and small) leading unstable mode $c_{j+1}^+$.
The $a_{\geq 2}, a_{j+1}^+$ are
explicit numbers defined below in \eqref{starpage23}, \eqref{starpage25}.
The function $\tilde g(t)$ and the transfer matrix $\tilde G_{\geq j+2}(t)$ are
explicit objects (depending only on $\sigma$) that will be defined below.
The $t=0$ configuration of the modes within the ricochet target 
is illustrated in Figure \ref{fig:MjRicochetTarget}. At $t=T$,
the modes are within the outgoing target as illustrated in Figure
\ref{fig:MjOutgoingTarget}.}\label{outtable}
\begin{tabular}{|l|l|l|l|}
\hline
Mode & $t=0$ & $0 < t < T$ & $t=T$  \\
\hline
$c_{\leq j-2}$ & $O( T^{A_j^0} e^{-2\sqrt{3} T} )$         & $O( T^{A_j^0} e^{-2\sqrt{3} T} )$                  & $O( T^{A_j^0} e^{-2\sqrt{3} T} )$ \\
\hline
$c_{j-1}^-$    & $O( T^{A_j^0} e^{-\sqrt{3} T} )$          & $O( T^{A_j^0+1} e^{-\sqrt{3} T}
e^{-\sqrt{3} t} )$  & $O( T^{A_j^0+1} e^{-2\sqrt{3} T} )$  \\
\hline
$c_{j-1}^+$    & $O( T^{A_j^0} e^{-3\sqrt{3} T} )$         & $O( T^{A_j^0+1} e^{-3\sqrt{3} T} e^{\sqrt{3} t})$ & $O( T^{A_j^0+1} e^{-2\sqrt{3} T} )$ \\
\hline
$c_{j+1}^-$    & $O( T^{A_j^0} e^{-3\sqrt{3} T} )$
               & $O( T^{2A_j^0+3} e^{-3\sqrt{3} T} e^{-\sqrt{3} t} )$
               & $O( T^{2A_j^0+3} e^{-4\sqrt{3} T} )$ \\
\hline
$c_{j+1}^+$    & $\sigma e^{-\sqrt{3} T} a^+_{j+1}$
               & $e^{-\sqrt{3} T} \tilde g(t)$
               & $\sigma$               \\
&\quad $+ O( T^{A_j^0} e^{-2\sqrt{3} T} )$ & \quad  $+ O( T^{A_j^0+2} e^{-2\sqrt{3} T}
e^{\sqrt{3} t} )$ & \quad $+ O(  T^{A_j^0+2} e^{-\sqrt{3} T} )$ \\
\hline
$c_{\geq j+2}$ & $e^{-2\sqrt{3} T} a_{\geq j+2}$
               & $e^{-2\sqrt{3} T} \tilde G_{\geq j+2}(t) a_{\geq j+2}$
               & $e^{-2\sqrt{3} T} \tilde G_{\geq j+2}(T) a_{\geq j+2}$ \\
              & \quad $+ O( T^{A^0_j}e^{-3\sqrt{3} } )$ & \quad $+ O( T^{A^0_j} e^{-3\sqrt{3} } )$ &
	      \quad $+ O( T^{A^0_j} e^{-3\sqrt{3} } )$\\
\hline
\end{tabular}
\end{table}

Once again, we begin with upper bounds on the flow, though now we also need a smallness condition on the unstable mode $c_{j+1}^+(0)$
(to stop the evolution from moving too far away from $\T_j$ by time $T$).

\begin{proposition}[Upper bounds, outbound leg]\label{growthprop-out}
Let $b(t)$ be a solution to \eqref{eq:b-eq} such that $b(0)$ is within $(M^0_j, d^0_j,R^0_j)$.
Let $c(t)$ denote the coordinates of $b(t)$ as in Proposition \ref{coord}.  Assume the smallness condition
$c_{j+1}^+(0) = O( \sigma e^{-\sqrt{3} T} )$.  Then we have the bounds
\begin{align*}
|c_*(t)|       &= O( T^{A_j^0} e^{-2\sqrt{3} T} ) \\
|c_{j-1}^-(t)| &= O( T^{A_j^0} e^{-\sqrt{3} T} e^{-\sqrt{3} t} (1 + T e^{-\sqrt{3}(T-t)}) )
\leq O( T^{A_j^0+1} e^{-\sqrt{3} T} e^{-\sqrt{3} t}) \\
|c_{j-1}^+(t)| &= O( T^{A_j^0+1} e^{-3\sqrt{3} T} e^{\sqrt{3} t} ) \\
|c_{j+1}^-(t)| &= O( T^{2A_j^0+3} e^{-3\sqrt{3} T} e^{-\sqrt{3} t} )\\
|c_{j+1}^+(t)| &= O( \sigma e^{-\sqrt{3} T} e^{\sqrt{3} t} )
\end{align*}
for $0 \leq t \leq T$.
\end{proposition}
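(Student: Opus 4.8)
The plan is to prove Proposition \ref{growthprop-out} by exactly the continuity (bootstrap) argument used for the inbound leg in Proposition \ref{growthprop}, now run ``in reverse'': instead of starting with a trailing stable mode of size $\sigma$ and following its decay, we start with a tiny leading unstable mode $c_{j+1}^+(0)=O(\sigma e^{-\sqrt3 T})$ and follow its growth to size $\sigma$ at time $T$. The engine, as before, is Lemma \ref{ubound}.

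First I would read off the data at $t=0$. Since $b(0)$ is within $(M^0_j,d^0_j,R^0_j)$ with $R^0_j=T^{A^0_j}$, the weights in the semimetric $d^0_j$ together with the defining relations of $M^0_j$ give $|c_{\le j-2}(0)|=O(T^{A^0_j}e^{-2\sqrt3 T})$, $|c_{j-1}^-(0)|=O(T^{A^0_j}e^{-\sqrt3 T})$, $|c_{j-1}^+(0)|=O(T^{A^0_j}e^{-3\sqrt3 T})$, $|c_{j+1}^-(0)|=O(T^{A^0_j}e^{-3\sqrt3 T})$ and $|c_{\ge j+2}(0)|=O(T^{A^0_j}e^{-2\sqrt3 T})$, while the separate smallness hypothesis supplies $c_{j+1}^+(0)=O(\sigma e^{-\sqrt3 T})$. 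I would stress that this last bound (stronger than the $O(r_j^0 e^{-\sqrt3 T})$ available from $M^0_j$ alone) is genuinely needed: it is exactly what keeps the leading unstable mode below $\sigma$ all the way to time $T$, so that the linear terms of Proposition \ref{coord} continue to dominate the cubic ones. These are the $t=0$ entries of Table \ref{outtable}.

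Then I would fix a large constant $C_0=C_0(N)$ and assume, on a subinterval $[0,T']\subseteq[0,T]$, each of the five claimed bounds with an extra factor $C_0$ on the right-hand side; recovering them without the $C_0$ makes the set of such $T'$ both open and closed, and since it contains $0$ it must equal $[0,T]$. The bootstrap bounds give $|c(t)|\lesssim C_0\sigma e^{-\sqrt3 T}e^{\sqrt3 t}+C_0 T^{A^0_j+1}e^{-\sqrt3 T}e^{-\sqrt3 t}$, so $\int_0^t|c(s)|^2\,ds\lesssim C_0^2\sigma^2+C_0^2 T^{2A^0_j+2}e^{-2\sqrt3 T}\lesssim 1$ once $\sigma$ is small relative to $C_0$ and $T$ is large; hence \eqref{cl2-bound} holds and Lemma \ref{ubound} is available on $[0,T']$. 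I would then argue mode by mode, in the same order as in Proposition \ref{growthprop}: \eqref{cstar-bound} gives $|c_*(t)|\lesssim|c_*(0)|\lesssim T^{A^0_j}e^{-2\sqrt3 T}$; \eqref{ctotal-bound} applied to $c_{j+1}$ gives $|c_{j+1}(t)|\lesssim e^{\sqrt3 t}|c_{j+1}(0)|\lesssim\sigma e^{-\sqrt3 T}e^{\sqrt3 t}$, which is the claimed bound on $c_{j+1}^+$; and then \eqref{cminus-bound} and \eqref{cplus-bound} applied successively to $c_{j-1}^-$, to $c_{j-1}^+$, and finally to $c_{j+1}^-$, feeding the bounds already obtained for the other modes into the forcing integrals. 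The bootstrap closes because each feedback integral carries an extra gain — a power of $\sigma$, a negative power of $T$, or a factor $e^{-\sqrt3 T}$ — more than enough to swallow the $C_0$ on the input and return the stated bound with an absolute implied constant.

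The step I expect to be the main obstacle is disentangling the mutual forcing among the small modes and ordering the estimates so that every Gronwall closes. In the inbound leg the trailing stable mode was overwhelmingly dominant and its unstable partner was a harmless perturbation; here neither trailing mode is large, so $c_{j-1}^-$ and $c_{j-1}^+$ drive one another through terms like $\overline{c_{j-1}^+}\,c_{\ne j-1}^2$, and in turn the leading stable mode $c_{j+1}^-$ is driven by $c_{j+1}^+$ times the squares of those, with the added subtlety that $|c_{\ne j\pm1}(s)|$ is governed by the still-tiny peripheral modes over most of $[0,T]$ and only by the growing leading unstable mode near $s=T$. Following this chain carefully is what generates the polynomial-in-$T$ losses recorded in the statement ($T^{A^0_j+1}$ on $c_{j-1}^\pm$, and the larger $T^{2A^0_j+3}$ on $c_{j+1}^-$, which sits at the far end of the feedback chain); pinning down those exponents — as opposed to the exponential rates $e^{\pm\sqrt3 t}$, which are immediate from the linearization in Proposition \ref{coord} — is the only genuinely fussy part, and it is handled by the same iterated Gronwall bookkeeping as in Section \ref{incoming-sec}.
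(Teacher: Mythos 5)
Your proposal is correct and follows essentially the same route as the paper: read off the initial bounds from the ricochet target plus the extra smallness hypothesis on $c_{j+1}^+(0)$, run a continuity/bootstrap argument with a constant $C_0(N)$, verify the $L^2$ smallness condition \eqref{cl2-bound} so that Lemma \ref{ubound} applies, and then close the estimates mode by mode in exactly the order you describe ($c_*$, then $c_{j+1}$ via \eqref{ctotal-bound}, then $c_{j-1}^-$, $c_{j-1}^+$, and finally $c_{j+1}^-$), with the polynomial-in-$T$ losses arising from the iterated forcing integrals just as you indicate. No gaps to report.
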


%\begin{figure}[htp]
%\centering
%\includegraphics[width=10cm]{MjRicochetTarget.pdf}
%\caption{An illustration of the $t=0$ configuration of the modes
%  within the ricochet target $(M_j^0, d_j^0, R_j^0)$.}\label{fig:MjMinusRicochetTequalZero}
%\end{figure}

%\begin{figure}[htp]
%\centering
%\includegraphics[width=10cm]{MjPlusTarget.pdf}
%\caption{An illustration of the $t=T$ configuration of the modes
%  arriving to the outgoing target $(M_j^+, d_j^+, R_j^+)$.}\label{fig:MjPlusTequalT}
%\end{figure}

\begin{proof}
From the hypotheses we easily verify that the bounds hold at time $t=0$.  As in the proof of Proposition \ref{growthprop}, we use
the continuity method.  We again let $C_0$ be a large constant (depending only on $N$)
to be chosen later, and suppose that
$0 \leq T' \leq T$ is a time for which the bounds
\begin{align*}
|c_*(t)|       &= O( C_0 T^{A_j^0} e^{-2\sqrt{3} T} ) \\
|c_{j-1}^-(t)| &= O( C_0 T^{A_j^0} e^{-\sqrt{3} T} e^{-\sqrt{3} t} (1 + T e^{-\sqrt{3}(T-t)}) ) \\
|c_{j-1}^+(t)| &= O( C_0 T^{A_j^0+1} e^{-3\sqrt{3} T} e^{\sqrt{3} t} ) \\
|c_{j+1}^-(t)| &= O( C_0 T^{2A_j^0+3} e^{-3\sqrt{3} T} e^{-\sqrt{3} t} )\\
|c_{j+1}^+(t)| &= O( C_0 \sigma e^{-\sqrt{3} T} e^{\sqrt{3} t} )
\end{align*}
are known to hold for all $0 \leq t \leq T'$.  As before, it suffices to then deduce the same bounds without the $C_0$ factor.

From the above bounds we have
$$ |c(t)| = O( C_0 T^{A_j^0+1} e^{-\sqrt{3} T} e^{-\sqrt{3} t} + C_0 \sigma e^{-\sqrt{3} T} e^{\sqrt{3} t} )$$
which gives the bound \eqref{cl2-bound} (taking $\sigma$ small compared to $C_0$.  Lemma \ref{ubound} now applies.  From \eqref{cstar-bound} we have
$$ |c_*(t)| \lesssim |c_*(0)| \lesssim T^{A_j^0} e^{-2\sqrt{3} T} $$
which gives the desired control on the peripheral modes $c_*$.  From \eqref{ctotal-bound} we have
$$ |c_{j+1}(t)| \lesssim e^{\sqrt{3} t} |c_{j+1}(0)| \lesssim \sigma e^{-\sqrt{3} T} e^{\sqrt{3} t} $$
which gives the desired control on the unstable leading mode $c_{j+1}^+$.  Next, we apply \eqref{cminus-bound} to obtain
$$ |c_{j-1}^-(t)| \lesssim e^{-\sqrt{3} t} c_{j-1}^-(0) +
\int_0^t e^{\sqrt{3}(t-s)} |c_{j-1}^+(s)| |c(s)|^2\ ds$$
which after substituting the initial bound $c_{j-1}^-(0) = O( T^{A_j^0} e^{-\sqrt{3} T})$ and the above bounds on $c_{j-1}^+(s)$, $c(s)$ gives
$$ |c_{j-1}^-(t)| \lesssim T^{A_j^0} e^{-\sqrt{3} T} e^{-\sqrt{3} t} (1 + T e^{-\sqrt{3}(T-t)})$$
which is the desired bound in the stable trailing mode $c_{j-1}^-$.  Then, we apply \eqref{cplus-bound} to obtain
$$ |c_{j-1}^+(t)| \lesssim e^{-\sqrt{3} t} c_{j-1}^+(0) + \int_0^t e^{\sqrt{3}(t-s)} |c_{j-1}^-(s)| |c(s)|^2\ ds$$
which after substituting the initial bound $c_{j-1}^+(0) = O( T^{A_j^0} e^{-3\sqrt{3} T}  )$ and the above bounds
for $c_{j-1}^-$ and $c$ becomes
$$ |c_{j-1}^+(t)| \lesssim T^{A_j^0+1} e^{-3\sqrt{3} T} e^{\sqrt{3} t}$$
which is the desired bound on the unstable trailing mode $c_{j-1}^+$.  These bounds imply in particular that
$$ |c_{\neq j+1}(t)| \lesssim T^{A_j^0+1} e^{-\sqrt{3} T} e^{-\sqrt{3} t}$$
while from \eqref{cminus-bound} we have
$$ |c_{j+1}^-(t)| \lesssim e^{-\sqrt{3} t} c_{j+1}^-(0) +
\int_0^t e^{-\sqrt{3}(t-s)} |c_{j +1}^+(s)| |c_{\neq j + 1}^2(s)|\ ds.$$
Combining these bounds and also using the initial bound $c_{j+1}^-(0) = O( T^{A_j^0} e^{-3\sqrt{3} T} )$ and the bound already obtained for $c_{j+1}^+$,
we conclude
$$ |c_{j+1}^-(t)| \lesssim T^{2A_j^0+3} e^{-3\sqrt{3} T} e^{-\sqrt{3} t} $$
which is the desired bound on the leading stable mode $c_{j+1}^-$.
\end{proof}

Now we need more precise bounds on the leading modes.  Here we will assume the initial data takes the form
\begin{align*}
c_{j+1}^+(0) &= \sigma e^{-\sqrt{3} T} a_{j+1}^+ + O( T^{A_j^0} e^{-2\sqrt{3} T} )\\
c_{\geq j+2}(0) &= e^{-2\sqrt{3} T} a_{\geq j+2} + O( T^{A_j^0} e^{-3\sqrt{3} T} )
\end{align*}
for some data $a_{j+1}^+ \in \R$, $a_{\geq j+2} \in \C^{N-j-1}$ of magnitude at most $O(1)$ and $r_j^-/2$ respectively,
which we shall choose later.  Henceforth the time variable is restricted to the interval
 $0 \leq t \leq T$.

\paragraph{\bf Improved control of $c_{j+1}^+$}.%make this a subsection hopefully after reorganizing?
We begin by refining the control on the unstable leading mode.  From Proposition \ref{growthprop-out} we see that
any expression of the form $\bigO(c^3)$ splits as the sum of a term $\bigO( (c_{j+1}^+)^3 )$,
plus an error
of size $O( T^{3A_j^0+3} e^{-3\sqrt{3} T} e^{\sqrt{3} t} )$.  From \eqref{ceq-unstable} we thus have
$$
\partial_t c_{j + 1}^+ = \sqrt{3} c_{j + 1}^+ +
\bigO( (c_{j + 1}^+)^3 ) + O( T^{3A_j^0+3} e^{-3\sqrt{3} T} e^{\sqrt{3} t} ).$$
Let us now compare this against the function $\tilde g$, defined
as the solution of the associated equation
$$
\partial_t \tilde g = \sqrt{3} \tilde g + \bigO( \tilde g^3 ) $$
with initial data
\begin{align}
\label{starpage22}
\tilde g(T) & = \sigma,
\end{align}
 at time $T$.  If $\sigma$ is small, then an easy continuity argument\footnote{Alternatively, one could observe that $\tilde g$ is basically the time reflection of the function $g$ used in the preceding section.} \emph{backwards} in time
shows that
$$ |\tilde g(t)| \leq 2 \sigma e^{-\sqrt{3} T} e^{\sqrt{3} t}.$$
In particular we have
\begin{equation}\label{tgbound-2}
\int_0^T |\tilde g(t)|^2 \ dt = O(1).
\end{equation}
We now fix $a_{j+1}^+$ by requiring
\begin{align} \label{starpage23}
 \tilde g(0) &= \sigma e^{-\sqrt{3} T} a_{j+1}^+,
 \end{align}
thus $a_{j+1}^+ = O(1)$ as required.  If we then use the ansatz
$$ c_{j+1}^+ = \tilde g + E_{j+1}^+$$
then the error $E_{j+1}^+$ obeys the equation
$$ \partial_t E_{j+1}^+ = \sqrt{3} E_{j + 1}^+ + O( |\tilde g|^2 |E_{j+1}| )
+ O( |E_{j+1}|^3 ) + O( T^{3A_j^0+3} e^{-3\sqrt{3} T} e^{\sqrt{3} t} )$$
with initial data $E_{j+1}^+(0) = O( T^{A_j^0} e^{-2\sqrt{3} T} )$.  A simple application of the continuity method and Gronwall's inequality (and \eqref{tgbound-2}) then yields the bound
$$ E_{j+1}^+(t) = O( T^{A_j^0} e^{-2\sqrt{3} T} e^{\sqrt{3} t} )$$
for all $0 \leq t \leq T$.  We thus conclude that
\begin{equation}\label{cjp-asym}
c_{j+1}^+(t) = \tilde g(t) + O( T^{3A_j^0 + 3} e^{-2\sqrt{3} T} e^{\sqrt{3} t} )
\end{equation}
which then (in conjunction with the bounds in Proposition \ref{growthprop-out} implies that
\begin{equation}\label{c2-asym}
\bigO( c^2 ) = \bigO( \tilde g^2 ) + O( T^{2A_j^0+2} e^{-2\sqrt{3} T} ).
\end{equation}

\paragraph{\bf Improved control of $c_{\geq j+2}$}  %I hope we can make this a subsection later on!!

We now control the leading peripheral modes, by essentially the same argument used to control these modes in the previous section.
From \eqref{ck-eq} and \eqref{c2-asym} we have
$$ \partial_t c_{\geq j+2} = i c_{\geq j+2} + \bigO( \tilde g^2 c_{\geq j+2} ) + O( T^{2A_j^0+1} e^{-2\sqrt{3} T} |c_{\geq j+2}| ).$$
We approximate this by the corresponding linear equation
$$ \partial_t u = i u + \bigO( u \tilde g^2 )$$
where $u(t) \in \C^{M-j-1}$.  This equation has a fundamental solution
$\tilde G_{\geq j+2}(t): \C^{M-j-1} \to \C^{M-j-1}$ for all $t \geq 0$, thus $G_{\geq j+2}(t) u(0) = u(t)$.
From \eqref{tgbound-2} and Gronwall's inequality we have
\begin{equation}\label{tgdither}
 | \tilde G_{\geq j+2}(t) |, |\tilde G_{\geq j+2}(t)^{-1} | = O(1).
\end{equation}
We use the ansatz
$$ c_{\geq j+2}(t) =  e^{-2\sqrt{3} T} \tilde G_{\geq j+2}(t) a_{\geq j+2} + \tilde E_{\geq j+2}.$$
The error $\tilde E_{\geq j+2}$ then solves the equation
$$ \partial_t \tilde E_{\geq j+2} = i\tilde E_{\geq j+2} + \bigO( \tilde E_{\geq j+2} \tilde g^2 ) +
O( T^{2A_j^0+2} e^{-2\sqrt{3} T} |c_{\geq j+2}| )$$
with initial data $\tilde E_{\geq j+2}(0) = O( T^{A_j^0} e^{-3\sqrt{3} T} )$.  Applying the bound on $c_{\geq j+2}$ from Proposition \ref{growthprop-out}
we see that
$$ \partial_t |\tilde E_{\geq j+2}| = O( |\tilde E_{\geq j+2}| |\tilde g|^2 ) + O( T^{3A_j^0+2} e^{-4\sqrt{3} T} ).$$
From Gronwall's inequality and \eqref{tgbound-2} we conclude that
$$ |\tilde E_{\geq j+2}(t)| = O( T^{3A_j^0+2} e^{-3\sqrt{3} T} )$$
for all $0 \leq t \leq T$, and thus
\begin{equation}\label{tcj2-bound}
c_{\geq j+2}(t) = e^{-2\sqrt{3} T} \tilde G_{\geq j+2}(t) a_{\geq j+2} + O( T^{A_j^0} e^{-3\sqrt{3} T} ).
\end{equation}

\subsection{Hitting the Outgoing Target}

Our estimates are now sufficiently accurate to show that $(M^0_j, d^0_j, R^0_j)$ can cover $(M^+_j, d^+_j, R^+_j)$.
Consider an arbitrary point $x^+_j$ in $M^+_j$, which in coordinates would take the form
\begin{align*}
c_{\leq j-1} = c_{j+1}^- &= 0\\
c_{j+1}^+ = \sigma \\
c_{\geq j+2} = z_{\geq j+2} e^{-2\sqrt{3} T}
\end{align*}
for some $z_{\geq j+2} \in \C^{N-j-1}$ of magnitude at most $r_j^+$.  We now specify a
point $x^0_j$ in $M^0_j$, which in coordinates
has the form
\begin{align*}
c_{\leq j-1} = c_{j+1}^- &= 0\\
c_{j+1}^+ &= \sigma  e^{-\sqrt{3} T} a_{j+1}^+  \\
c_{\geq j+2} &= a_{\geq j+2} e^{-2\sqrt{3} T}
\end{align*}
with $a_{j+1}^+ = O(1)$ defined in \eqref{starpage23} and some
$a_{\geq j+2} \in \C^{N-j-1}$ of magnitude at most $r_j^0/2$ to be chosen later (see
\eqref{starpage25} below), such that given any data $c(0)$ which is within $R_j^0 = T^{A_j^0}$ of
$x^0_j$ in the metric $d_j^0$, thus in coordinates
\begin{align*}
c_{\leq j-2}(0) &= O(T^{A_j^0} e^{-2\sqrt{3} T} )\\
c_{j-1}^-(0) &= O( T^{A_j^0} e^{-\sqrt{3} T} )\\
c_{j-1}^+(0) &= O( T^{A_j^0} e^{-3\sqrt{3} T} )\\
c_{j+1}^-(0) &= O( T^{A_j^0} e^{-3\sqrt{3} T} )\\
c_{j+1}^+(0) &= \sigma  e^{-\sqrt{3} T} a_{j+1}^+ + O( T^{A_j^0} e^{-2\sqrt{3} T} ) \\
c_{\geq j+2}(0) &= a_{\geq j+2} e^{-2\sqrt{3} T} + O( T^{A_j^0} e^{-3\sqrt{3} T} )
\end{align*}
the evolution of this data  at time $T$ will lie within $R^+_j = T^{A^+_j}$ of
$x^+_j$ in the $d^+_j$ metric.  Thus we aim to show,
\begin{equation}\label{bullet-out}
\begin{split}
e^{2\sqrt{3} T} &|c_{\leq j-1}(T)| + e^{4\sqrt{3} T} |c_{j+1}^-(T)| + e^{\sqrt{3} T} |c_{j+1}^+(T) - \sigma| \\
&\quad
+ e^{3\sqrt{3} T} |c_{\geq j+2}(T) - e^{-2\sqrt{3} T} z_{\geq j+2}| < T^{A^+_j}.
\end{split}
\end{equation}
We argue as in the previous section.  From
%{\bf [Gigliola: an equivalent of]}
Proposition \ref{growthprop-out} we have
\begin{align*}
c_{\leq j-2}(T) &= O( T^{A_j^0} e^{-2\sqrt{3} T} ) \\
c_{j-1}^-(T) &= O( T^{A_j^0+1} e^{-2\sqrt{3} T} ) \\
c_{j-1}^+(T) &= O( T^{A_j^0 +1} e^{-2\sqrt{3} T} ) \\
c_{j+1}^-(T) &= O( T^{2A_j^0+3} e^{-4\sqrt{3} T} )
\end{align*}
which shows that the contribution of the trailing modes and the leading
stable mode will be acceptable.  From \eqref{cjp-asym}, \eqref{starpage22} we have
$$ c_{j+1}^+(T) = \sigma + O( T^{3A_j^0 +3} e^{-\sqrt{3} T} )$$
so the contribution of the leading unstable mode will also be acceptable.  Finally, from \eqref{tcj2-bound} we have
$$ c_{\geq j+2}(T) = e^{-2\sqrt{3} T} \tilde G_{\geq j+2}(T) a_{\geq j+2} + O( T^{A_j^0} e^{-3\sqrt{3} T} )$$
so if we choose
\begin{align}
\label{starpage25}
 a_{\geq j+2} &= \tilde G_{\geq j+2}(T)^{-1} z_{\geq j+2}
 \end{align}
then from \eqref{tgdither} we see that $a_{\geq j+2}$ will have magnitude $O(r^+_j) \leq r^0_j / 2$, and the contribution of
the leading peripheral modes will also be acceptable.  This completes the proof that
$(M^0_j, d^0_j, r^0_j)$ covers $(M^+_j, d^+_j, r^+_j)$.

\subsection{Flowing from the Outgoing Target to the Next Incoming Target}\label{transit-sec}

%\textbf{This is the bit that was missing from the previous notes. - TT.}

Fix $3 \leq j < N-2$.
To conclude the proof of Theorem \ref{arnold} we need to show that $(M^+_j, d^+_j, r^+_j)$ covers $(M^-_{j+1}, d^-_{j+1}, r^-_{j+1})$.  This turns out
to be significantly simpler than the previous analysis because we will only need to flow\footnote{Indeed, if we could take $\sigma$ as large as $1/\sqrt{2}$ then we would not need to flow at all, and we could essentially match up the $j^{th}$ outgoing target with the $j+1^{th}$ incoming target.  However we took advantage of the smallness of $\sigma$ at too many places in the above argument, and so we are forced to add this bridging step as well.} for a time $O(\log \frac{1}{\sigma})$ rather than time $O(T)$.  This means that we can rely on much cruder Gronwall inequality-type tools
than in preceding sections as we do not have to be so careful about exponential or even polynomial losses in $T$.
On the other hand, the analysis here is more ``nonperturbative'' in that we are no longer close to a circle $\T_j$ or $\T_{j+1}$ but instead will
be traversing the intermediate region connecting the two.  Fortunately, we have an explicit\footnote{Actually, as in previous sections, the analysis can proceed without knowing the explicit form of this solution, only its qualitative properties, namely that it propagates from $\T_j$ to $\T_{j+1}$ using an unstable mode of the former and a stable mode of the latter.  But as the solution is so simple, we will take advantage of its explicitness.} solution (based on \eqref{eq:slider}) that we can follow (via standard perturbative theory) to carry us from one to the other without much difficulty.

We turn to the details.  Before we begin, there is an issue of which coordinate system to use: the local coordinates around $\T_j$, the local coordinates around $\T_{j+1}$, or the global coordinates $b_1,\ldots, b_N$.  It is arguably more natural to use global coordinates for the
transition from $\T_j$ to $\T_{j+1}$, but we will continue using the local coordinates around $\T_j$ since we have already built a fair amount of
machinery in these coordinates.  Also, these local coordinates have already quotiented out the phase invariance $x \mapsto e^{i\theta} x$ which would otherwise have required a small amount of attention.

Let us start with initial data within the outgoing target $(M^+_j, d^+_j, r^+_j)$.  In the local coordinates around $\T_j$, such an initial data takes the form
\begin{align*}
c_{\leq j-1}(0) &= O( T^{A^+_j} e^{-2\sqrt{3} T} ) \\
c_{j+1}^-(0) &= O( T^{A^+_j} e^{-4\sqrt{3} T} ) \\
c_{j+1}^+(0) &= \sigma + O( T^{A^+_j} e^{-\sqrt{3} T} ) \\
c_{\geq j+2}(0) &= e^{-2\sqrt{3} T} a_{\geq j+2} + O( T^{A^+_j} e^{-3\sqrt{3} T} )
\end{align*}
for some $a_{\geq j+2}$ of magnitude at most $r^+_j$.  Let us now consider the evolution of such data for times $0 \leq t \leq O( \log \frac{1}{\sigma} )$.  Because the unstable leading mode $c_{j+1}^+$ is already as large as $\sigma$, and is growing exponentially, we will no longer be in
the perturbative regime covered by Lemma \ref{ubound}.  However, we must necessarily stay within the region \eqref{cbound}; note that the coordinate singularity $\{ b_j = 0 \}$ cannot actually be reached via the flow, because as remarked earlier the support of $b$ is an invariant of the flow.
The bound \eqref{cbound} will still allow us to use Gronwall-type arguments for a time period of $O( \log \frac{1}{\sigma} )$, incurring (quite tolerable) losses which are polynomial in $1/\sigma$.  (This is in contrast to the analysis of the previous sections, where such a crude argument would cost unacceptable factors of $e^T$.)

For the rest of this section we shall restrict the time variable $t$ to the 
range $0 \leq t \leq 10 \log\frac{1}{\sigma}$ (say).
From \eqref{ck-eq}, \eqref{cbound} (and \eqref{cpm-eq} for the trailing secondary mode) we have the crude estimates
$$ \partial_t c_{\neq j+1} = O( |c_{\neq j+1}| )$$
so by Gronwall we have the crude bounds
\begin{equation}\label{cj-crude}
c_{\neq j+1}(t) = O( \frac{1}{\sigma^{O(1)}} T^{A^+_j} e^{-2\sqrt{3} T} ).
\end{equation}
We will return to establish more accurate bounds on the leading peripheral modes $c_{\geq j+2}$ shortly, but for now let us focus attention on the leading secondary modes $c_{j+1}^-, c_{j+1}^+$.  The stable leading mode $c_{j+1}^-$ can be
controlled by \eqref{ceq-stable}, which by \eqref{cj-crude} and \eqref{cbound} becomes
$$ \partial_t c_{j + 1}^- = O( |c_{j+1}^-| ) +  \frac{1}{\sigma^{O(1)}} T^{2A^+_j} e^{-4\sqrt{3} T}.$$
From Gronwall's inequality we conclude that
\begin{equation}\label{cjp-tiny}
 c_{j+1}^-(t) = O( \frac{1}{\sigma^{O(1)}} T^{2A^+_j} e^{-4\sqrt{3} T} ).
 \end{equation}
Now we turn to the most important non-primary mode, namely the unstable leading mode $c_{j+1}^+$.
Because we are no longer in the perturbative regime, the schematic equation \eqref{ceq-unstable} has a form which is a little bit too
crude for our purposes.  Instead we return to \eqref{cpm-eq}.  Taking advantage of the bounds \eqref{cjp-tiny}, \eqref{cj-crude} just obtained (as well as \eqref{cbound}), we can
take the $c_{j+1}^+$ component of this equation and obtain
$$ \partial_t c_{j+1}^+ = \sqrt{3} (1 - |c_{j+1}^+|^2) c_{j+1}^+ + O( \frac{1}{\sigma^{O(1)}} T^{2A^+_j} e^{-4\sqrt{3} T} ).$$
Now let us define $\hat g$ to be the solution to the scalar ODE
\begin{equation}\label{hatg} \partial_t \hat g = \sqrt{3} (1 - |\hat g|^2) \hat g
\end{equation}
with initial data $\hat g(0) = \sigma$.  We can easily compute $\hat g$ explicitly\footnote{For our analysis, the only property one needs of $\hat g$ is that it flows from $\sigma$ to $\sqrt{1-\sigma^2}$ in finite time (the fact that this time is $O( \log\frac{1}{\sigma} )$ is not essential to the argument).  This is immediate from an inspection of the vector field associated to the ODE \eqref{hatg}, but the argument via the explicit solution is equally brief.} as
$$ \hat g(t) = \frac{1}{\sqrt{1 + e^{-2\sqrt{3}(t-t_0)}}}$$
where the time $t_0$ is defined by the formula
$$ \frac{1}{\sqrt{1 + e^{2\sqrt{3} t_0}}} = \sigma.$$
This is of course closely related to the slider solution \eqref{eq:slider} (see also Remark \ref{slider-remark}).  Also observe that
$$ \hat g(2t_0) = \frac{1}{\sqrt{1 + e^{-2\sqrt{3} t_0}}} = \sqrt{1 - \sigma^2}$$
and that $2t_0 \leq 10 \log \frac{1}{\sigma}$ if $\sigma$ is small enough.

As in previous sections, we now use the ansatz
$$ c_{j+1}^+ = \hat g + E_{j+1}^+$$
where (thanks to the boundedness of both $\hat g$ and $c_{j+1}^+$) the error $E_{j+1}^+$ obeys the equation
$$ \partial_t E_{j+1}^+ = O( |E_{j+1}^+| ) + O( \frac{1}{\sigma^{O(1)}} T^{2A^+_j} e^{-4\sqrt{3} T} )$$
with initial data $E_{j+1}^+(0) = O( T^{A^+_j} e^{-\sqrt{3} T} )$.  From Gronwall we thus have
$$ E_{j+1}^+(t) = O( \frac{1}{\sigma^{O(1)}} T^{A^+_j} e^{-\sqrt{3} T} )$$
and hence
\begin{equation}\label{cjp-transit}
 c_{j+1}^+(t) = \hat g(t) + O( \frac{1}{\sigma^{O(1)}} T^{A^+_j} e^{-\sqrt{3} T} ).
\end{equation}
In particular this (together with \eqref{cjp-tiny}, \eqref{cj-crude}) implies that
\begin{equation}\label{c2-transit}
\bigO( c^2 ) = \bigO( \hat g^2 ) + O( \frac{1}{\sigma^{O(1)}} T^{A^+_j} e^{-\sqrt{3} T} ).
\end{equation}
Now we can return to improve the control on the leading peripheral modes $c_{\geq j+2}$.  From \eqref{c2-transit}, \eqref{cj-crude} and \eqref{ck-eq} we have
$$ \partial_t c_{\geq j+2} = i c_{\geq j+2} + \bigO( \hat g^2 c_{\geq j+2} ) + O( \frac{1}{\sigma^{O(1)}} T^{2A^+_j} e^{-3\sqrt{3} T} ).$$
As in previous sections we approximate this flow by the linear equation
$$ \partial_t u = i u + \bigO( u \hat g^2 )$$
where $u(t) \in \C^{N-j-1}$.  This equation has a fundamental solution
$\hat G_{\geq j+2}(t): \C^{N-j-1} \to \C^{N-j-1}$ for all $t \geq 0$; from the boundedness of $\hat g$ and Gronwall's inequality we have
\begin{equation}\label{tgdither-transit}
 | \hat G_{\geq j+2}(t) |, |\hat G_{\geq j+2}(t)^{-1} | \lesssim \frac{1}{\sigma^{O(1)}}.
\end{equation}
We use the ansatz
$$ c_{\geq j+2}(t) = e^{-2\sqrt{3} T} \hat G_{\geq j+2}(t) a_{\geq j+2} + E_{\geq j+2}$$
where the error $E_{\geq j+2}$ obeys an equation of the form
$$ \partial_t E_{\geq j+2} = O( |E_{\geq j+2}| ) + O( \frac{1}{\sigma^{O(1)}} T^{2A^+_j} e^{-3\sqrt{3} T} )$$
with initial data $E_{\geq j+2}(0) = O( T^{A^+_j} e^{-3\sqrt{3} T} )$.  From Gronwall's inequality we conclude
$$ E_{\geq j+2}(t) = O( \frac{1}{\sigma^{O(1)}} T^{2A^+_j} e^{-3\sqrt{3} T} )$$
and hence
\begin{equation}\label{cj2-transit}
c_{\geq j+2}(t) = e^{-2\sqrt{3} T} \hat G_{\geq j+2}(t) a_{\geq j+2} + O( \frac{1}{\sigma^{O(1)}} T^{2A^+_j} e^{-3\sqrt{3} T} ).
\end{equation}

We specialize the above bounds to the time $t = 2t_0 \leq 10 \log \frac{1}{\sigma}$, and conclude that
\begin{align*}
c_{\leq j-1}(2t_0) &= O( \frac{1}{\sigma^{O(1)}} T^{A^+_j} e^{-2\sqrt{3} T} ) \\
c_{j+1}^-(2t_0) &= O( \frac{1}{\sigma^{O(1)}} T^{2A^+_j} e^{-4\sqrt{3} T} ) \\
c_{j+1}^+(2t_0) &= \sqrt{1-\sigma^2} + O( \frac{1}{\sigma^{O(1)}} T^{A^+_j} e^{-\sqrt{3} T} ) \\
c_{\geq j+2}(2t_0) &= e^{-2\sqrt{3} T} \hat G_{\geq j+2}(2t_0) a_{\geq j+2} + O( \frac{1}{\sigma^{O(1)}} T^{2A^+_j} e^{-3\sqrt{3} T} )
\end{align*}
Using \eqref{rform}, we conclude that
$$ r = \sigma + O( \frac{1}{\sigma^{O(1)}} T^{A^+_j} e^{-\sqrt{3} T} ).$$
The phase $\theta$ is unspecified, but this will not concern us.  We can now move back to the global coordinates $b_1,\ldots,b_N$ and conclude that
\begin{align*}
b_{\leq j-1}(2t_0) &= O( \frac{1}{\sigma^{O(1)}} T^{A^+_j} e^{-2\sqrt{3} T} ) \\
b_j(2t_0) &= (\sigma + \Re O( \frac{1}{\sigma^{O(1)}} T^{A^+_j} e^{-\sqrt{3} T} ))  e^{i\theta} \\
b_{j+1}(2t_0) &= (\sqrt{1-\sigma^2} + \Re O( \frac{1}{\sigma^{O(1)}} T^{A^+_j} e^{-\sqrt{3} T} )) \omega^2 e^{i\theta} +
O( \frac{1}{\sigma^{O(1)}} T^{2A^+_j} e^{-4\sqrt{3} T} ) \\
b_{\geq j+2}(2t_0) &= e^{i\theta} e^{-2\sqrt{3} T} \hat G_{\geq j+2}(2t_0) a_{\geq j+2} + O( \frac{1}{\sigma^{O(1)}} T^{2A^+_j} e^{-3\sqrt{3} T} )
\end{align*}
where we have inserted real parts in front of some error terms for emphasis.

We now recast this in terms of the local coordinates around $\T_{j+1}$; to avoid confusion with the local coordinates $\T_j$, we shall denote these new coordinates with tildes, thus $\tilde r,\tilde \theta, \tilde c_{\leq j-1}, \tilde c_j^-, \tilde c_j^+, \tilde c_{j+2}^-, \tilde c_{j+2}^+, \tilde c_{\geq j+2}$.  Firstly, $b_{j+1}(2t_0)$ is certainly non-zero (it has magnitude close to $\sqrt{1-\sigma^2}$), and an inspection of the phase of $b_{j+1}(2t_0)$ shows that
$$ \tilde \theta(2t_0) = \theta + \frac{4\pi}{3} + O( \frac{1}{\sigma^{O(1)}} T^{2A^+_j} e^{-4\sqrt{3} T} ).$$
We then conclude that
\begin{align*}
\tilde c_{\leq j-1}(2t_0) &= O( \frac{1}{\sigma^{O(1)}} T^{A^+_j} e^{-2\sqrt{3} T} ) \\
\tilde c_j^-(2t_0) &= \sigma + O( \frac{1}{\sigma^{O(1)}} T^{A^+_j} e^{-\sqrt{3} T} ) \\
\tilde c_j^+(2t_0) &= O( \frac{1}{\sigma^{O(1)}} T^{2A^+_j} e^{-4\sqrt{3} T} ) \\
\tilde c_{\geq j+2}(2t_0) &=  \omega e^{-2\sqrt{3} T} \hat G_{\geq j+2}(2t_0) a_{\geq j+2} + O( \frac{1}{\sigma^{O(1)}} T^{2A^+_j} e^{-3\sqrt{3} T} ).
\end{align*}
From this and \eqref{tgdither-transit} it is an easy matter to show that $(M^-_{j+1}, d^-_{j+1}, R^-_{j+1})$ is covered by
$(M^0_j, d^0_j, R^0_j)$, by choosing $a_{\geq j+2}$ appropriately (note that any losses arising from \eqref{tgdither-transit} will be
acceptable since $r^+_j$ is assumed to be much larger than $r^-_{j+1}$ depending on $\sigma$).

\section{Construction of the Resonant Set $\Lambda$}\label{NumberTheory}

Fix $N \geq 2$.
We begin by constructing an abstract combinatorial model $\Sigma = \Sigma_1 \cup \ldots \cup \Sigma_N$
for the collection $\Lambda = \Lambda_1 \cup \ldots \cup \Lambda_N$ of frequencies.
Each element $x_0 \in \Sigma$  corresponds to a unique element $n_0 \in \Lambda$ and encodes
both how $n_0$ is related to other elements of $\Lambda$, and (at least approximately) where
$n_0$ is located in $\Z^2$.  The relationship between $\Sigma$ and $\Lambda$ will be made
explicit later when we construct an embedding of $\Sigma$ into $\Z^2$ and define $\Lambda$
to be this image.  Whereas $\Lambda$ lives in the frequency lattice $\Z^2$, $\Sigma$ will live in a more
abstract set ($\C^{N-1}$, to be precise).
%We will later embed $\Sigma$ into $\Lambda$ to create the desired frequency set.

To construct $\Sigma$, we define the \emph{standard unit square} $S \subset \C$ to be the four-element set of complex numbers
$$ S = \{ 0, 1, 1+i, i \}.$$
We split $S = S_1 \cup S_2$, where $S_1 := \{ 1,i \}$ and $S_2 := \{0, 1+i\}$.  Note that this is already an $N=2$ model for the $\Lambda$'s,
if we identify the frequency lattice $\Z^2$ with the Gaussian integers $\Z[i]$ in the usual manner $(n_1,n_2) \leftrightarrow n_1 + i n_2$.
To create $\Sigma$ we shall essentially take a large power of this unit square example.  More precisely, for any $1 \leq j \leq N$, we define
$\Sigma_j \subset \C^{N-1}$ to be the set of all $N-1$-tuples $(z_1,\ldots,z_{N-1})$ such that $z_1, \ldots, z_{j-1} \in S_2$ and $z_j, \ldots, z_{N-1} \in S_1$.  
In other words,
$$ \Sigma_j := S_2^{j-1} \times S_1^{N-j}.$$
Note that each $\Sigma_j$ consists of $2^{N-1}$ elements, and they are all disjoint.  We then set $\Sigma = \Sigma_1 \cup \ldots \cup \Sigma_N$;
this set consists of $N 2^{N-1}$ elements.  We refer to $\Sigma_j$ as the \emph{$j^{th}$ generation} of $\Sigma$.

For each $1 \leq j < N$, we define a \emph{combinatorial nuclear family connecting generations $\Sigma_j, \Sigma_{j+1}$} to be any
four-element set $F \subset
\Sigma_j \cup \Sigma_{j+1}$ of the form
$$ F := \{ (z_1,\ldots,z_{j-1}, w, z_{j+1}, \ldots, z_N): w \in S \}$$
where $z_1, \ldots, z_{j-1} \in S_2$ and $z_{j+1}, \ldots, z_N \in S_1$.  In other words, we have
$$ F = \{ F_0, F_1, F_{1+i}, F_i \} = = \{(z_1,\ldots,z_{j-1})\} \times S \times \{ (z_{j+1}, \ldots, z_N) \}$$
where $F_w = (z_1,\ldots,z_{j-1}, w, z_{j+1}, \ldots, z_N)$.
It is clear that $F$ is a four-element set consisting of two elements $F_1, F_i$ of $\Sigma_j$ (which we call the \emph{parents} in $F$)
and two elements $F_0, F_{1+i}$ of $\Sigma_{j+1}$ (which we call the \emph{children} in $F$).  For each $j$ there
are $2^{N-2}$ combinatorial nuclear families connecting the generations $\Sigma_j$ and $\Sigma_{j+1}$.  One easily verifies the following properties:

\begin{itemize}

\item (Existence and uniqueness of spouse and children) For any $1 \leq j < N$ and any $x \in \Sigma_j$ there exists a unique combinatorial nuclear family $F$
connecting $\Sigma_j$ to $\Sigma_{j+1}$ such that $x$ is a parent of this family (i.e. $x = F_1$ or $x = F_i$).  In particular each $x \in \Sigma_j$
has a unique spouse (in $\Sigma_j$) and two unique children (in $\Sigma_{j+1}$).

\item (Existence and uniqueness of sibling and parents) For any $1 \leq j < N$ and any $y \in \Sigma_{j+1}$ there exists a unique combinatorial
nuclear family $F$ connecting $\Sigma_j$ to $\Sigma_{j+1}$ such that $y$ is a child of the family (i.e. $y = F_0$ or $y = F_{1+i}$).
In particular each $y \in \Sigma_{j+1}$ has a unique sibling (in $\Sigma_{j+1}$) and two unique parents (in $\Sigma_j$).

\item (Nondegeneracy) The sibling of an element $x \in \Sigma_j$ is never equal to its spouse.

\end{itemize}

{\bf Example.} If $N=7$, the point $x = (0, 1+i, 0, i, i, 1)$ lies in the fourth generation $\Sigma_4$.  Its spouse is
$(0, 1+i, 0, 1, i, 1)$ (also in $\Sigma_4$) and its two children are $(0, 1+i, 0, 0, i, 1)$ and $(0, 1+i, 0, 1+i, i, 1)$
(both in $\Sigma_5$).  These four points form a combinatorial nuclear family connecting the generations $\Sigma_4$ and $\Sigma_5$.
The sibling of $x$ is
$(0, 1+i, 1+i, i, i, 1)$ (also in $\Sigma_4$, but distinct from the spouse) and its two parents
are $(0, 1+i, 1, i, i, 1)$ and $(0, 1+i, i, i, i, 1)$ (both in $\Sigma_3$).  These four points form a combinatorial nuclear family
connecting the generations $\Sigma_3$ and $\Sigma_4$. Elements of $\Sigma_1$ do not have siblings or parents,
and elements of $\Sigma_7$ do not have spouses or children.

Now we need to embed $\Sigma$ into the frequency lattice $\Z^2$.  We shall abuse notation and identify this lattice $\Z^2$ with the
Gaussian integers $\Z[i]$ in the usual manner.  We shall need a number of parameters:

\begin{itemize}

\item (Placement of initial generation) We will need a function $f_1: \Sigma_1 \to \C$ which assigns to each point $x$ in the first generation,
a location $f_1(x)$ in the complex plane (eventually we will choose $f$ to take values in the Gaussian integers).

\item (Angle of each nuclear family) For each $1 \leq j < N$ and each combinatorial nuclear family $F$ connecting the generations $\Sigma_j$ and $\Sigma_{j+1}$, we need an angle $\theta(F) \in \R/2\pi\Z$.

\end{itemize}

Given the placement function of the first generation, and given all the angles, we can then recursively define the placement function $f_j: \Sigma_j \to \C$ for later generations $2 \leq j \leq N$ by the following rule:

\begin{itemize}

\item If $1 \leq j < N$ and $f_j: \Sigma_j \to \C$ has already been constructed, we define $f_{j+1}: \Sigma_{j+1} \to \C$ by requiring
\begin{align*}
f_{j+1}( F_{1+i} ) &= \frac{1 + e^{i\theta(F)}}{2} f_j(F_1) +  \frac{1 - e^{i\theta(F)}}{2} f_j(F_i)\\
f_{j+1}( F_0 ) &= \frac{1 + e^{i\theta(F)}}{2} f_j(F_1) -  \frac{1 - e^{i\theta(F)}}{2} f_j(F_i)
\end{align*}
for all combinatorial nuclear families $F$ connecting $\Sigma_j$ to
$\Sigma_{j+1}$.

\end{itemize}

In other words, we require
$f_j(F_1), f_{j+1}(F_{1+i}), f_j(F_i), f_{j+1}(F_0)$ to form the four points of a rectangle in $\C$, with the long diagonals intersecting at
angle $\theta(F)$.

Note that this definition is well-defined thanks to the existence and uniqueness of parents.

{\bf A model example}:  Let $\rad$ be a large integer, and let the initial placement function be
$$ f_1(z_1, \ldots, z_{N-1}) := \rad z_1 \ldots z_{N-1} \in \{\rad, i\rad, -\rad, -i\rad \} \hbox{ for all } (z_1,\ldots,z_{M-1}) \in \Sigma_1.$$
Set all the angles $\theta(F)$ to equal $\pi/2$.  Then one can show inductively that
$$ f_j(z_1, \ldots, z_{N-1}) := \rad z_1 \ldots z_{N-1} \in \{0, (1+i)^{j-1} \rad, i (1+i)^{j-1} \rad, - (1+i)^{j-1} \rad, -i (1+i)^{j-1} \rad \}
\hbox{ for all } (z_1,\ldots,z_{N-1}) \in \Sigma_j.$$
Note that as the generations increase, an increasing majority of the points in that generation will get mapped to zero by the placement
function, but an increasingly small minority will get mapped to larger and larger frequencies.  For the final generation
$\Sigma_N$, there is a single element $(1+i,\ldots,1+i)$ which is mapped to the very large frequency $(1+i)^N \rad$, whereas all the other
$2^{N-1}-1$ frequencies are mapped to zero.  Notice that in each combinatorial nuclear
family $F = \{F_0, F_1, F_{1+i}, F_i\}$ connecting $\Sigma_j$ to $\Sigma_{j+1}$, the two parents $F_1$ and $F_i$ get mapped
to frequencies of equal magnitude (indeed $f_j(F_i) = i f_j(F_1)$), but of the two children, $F_0$ and $F_{1+i}$, one of the children
(the ``under-achiever'' $F_0$) gets mapped to 0, whereas the other child (the ``over-achiever'' $F_{1+i}$) gets mapped to a frequency of magnitude
$\sqrt{2}$ as large as that of its parents.  Our NLS solution will distribute mass evenly from $F_1$ and $F_i$ to $F_0$ and $F_{1+i}$, but
will distribute the energy from $F_1$ and $F_i$ almost entirely to $F_{1+i}$, thus sending
the energy to increasingly high frequencies.
Indeed by simply counting the number of nonzero frequencies in each generation, one easily verifies that
$$ \sum_{n \in f(\Sigma_{N-2})} |n|^{2s} = 2^{s(N-3) + 2} \rad^{2s}$$
and
$$ \sum_{n \in f(\Sigma_3)} |n|^{2s} = 2^{2s + N - 3} \rad^{2s}$$
and so there is a norm explosion by a factor of $2^{(s-1)(N-5)}$.

We define $f: \Sigma \to \C$ to be the function formed by concatenating the individual
functions $f_j: \Sigma_j \to \C$. The function $f$ will be referred to as the \emph{complete placement function}.

Figure \ref{fig:planting_m5} contains a sketch of the complete placement function's image
in the case $N = 5$.  (We emphasize that this function is not injective - for example
every generation after the first has several 4-tuples mapped to the origin.)

\begin{figure}[htp]
\centering
\includegraphics[width=12cm]{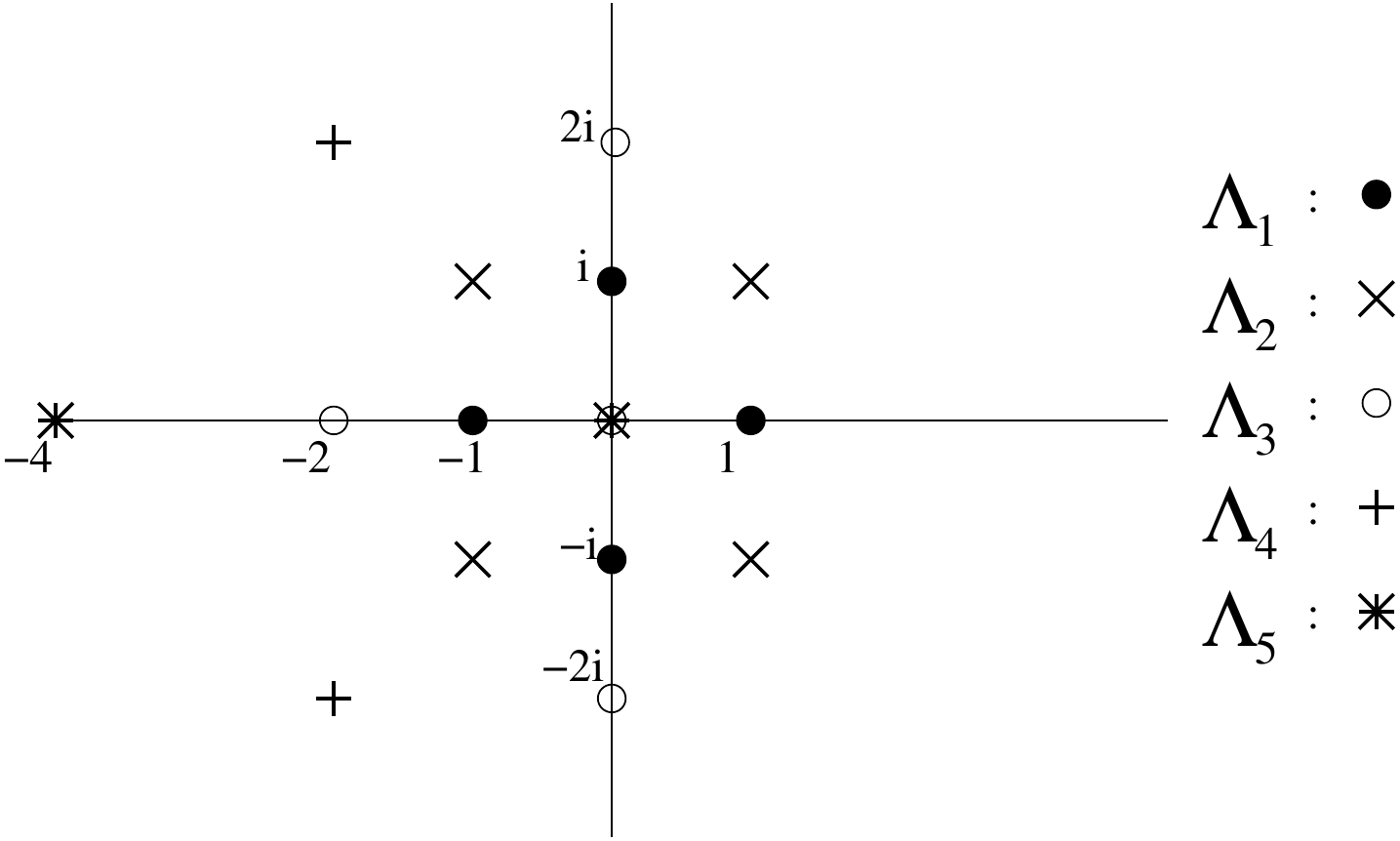}
\caption{The initial placement of frequencies in the case $N = 5$.}\label{fig:planting_m5}
\end{figure}

The model example turns out not to be directly usable for us because it is highly non-injective,
and also contains a large number of unwanted
rectangles.  But this can be fixed by a perturbation argument:

\begin{theorem}[Construction of a good placement function]  Let $N \geq 2$, $s > 1$, and let $\rad$ be a sufficiently large integer (depending on $N$).  Then
there exists an initial placement function $f_1: \Sigma_1 \to \C$ and choices of angles $\theta(F)$ for each nuclear family $F$ (and thus an associated complete placement function $f: \Sigma \to \C$) with the following properties:

\begin{itemize}

\item (Nondegeneracy) The function $f$ is injective.

\item (Integrality) We have $f(\Sigma) \subset \Z[i]$.

\item (Magnitude) We have $C(N)^{-1} \rad \leq |f(x)| \leq C(N) \rad$ for all $x \in \Sigma$.

\item (Closure and Faithfulness)  If $x_1, x_2, x_3 \in \Sigma$ are distinct elements of $\Sigma$ are such that $f(x_1), f(x_2), f(x_3)$ form the three
corners of a right-angled triangle, then $x_1, x_2, x_3$ belong to a combinatorial nuclear family.

\item (Norm explosion) We have
$$ \sum_{n \in f(\Sigma_{N-2})} |n|^{2s} > \frac{1}{2} 2^{(s-1)(N-5)} \sum_{n \in f(\Sigma_3)} |n|^{2s}.$$

\end{itemize}

\end{theorem}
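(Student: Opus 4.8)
The plan is to perturb the ``model example'' placement function, in which all angles $\theta(F)$ equal $\pi/2$ and $f_1$ is supported on the four points $\{\rad,i\rad,-\rad,-i\rad\}$. That model already has the right magnitude bounds, lies in $\Z[i]$, and enjoys the norm explosion property by the explicit count $\sum_{n\in f(\Sigma_{N-2})}|n|^{2s}=2^{s(N-3)+2}\rad^{2s}$ versus $\sum_{n\in f(\Sigma_3)}|n|^{2s}=2^{2s+N-3}\rad^{2s}$; the only defects are that it is wildly non-injective (many tuples map to $0$) and it contains unwanted rectangles. So the strategy is: \emph{keep} the angles in a small neighborhood of $\pi/2$ and perturb $f_1$ slightly, show that for a generic choice in that neighborhood the bad coincidences (non-injectivity, extra right triangles) are all destroyed, then round everything to the Gaussian integers and rescale $\rad$ to restore integrality and the magnitude bounds.

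First I would treat the angles $\theta(F)\in(\R/2\pi\Z)$ and the initial placements $f_1(x)\in\C$ as free real parameters ranging over a product of small balls centered at the model values; call this parameter space $P$, a compact subset of a Euclidean space of dimension $2(\#\Sigma_1)+\#\{\text{families}\}=2\cdot 2^{N-1}+(N-1)2^{N-2}$. For each fixed parameter $p\in P$ the recursion for $f_{j+1}$ in terms of $f_j$ is a fixed polynomial (indeed affine in each $f_j(F_1),f_j(F_i)$ with coefficients $\tfrac{1\pm e^{i\theta(F)}}{2}$), so the complete placement function $f=f^{(p)}$ depends real-analytically on $p$. For any pair of distinct elements $x\neq y$ of $\Sigma$, the function $p\mapsto f^{(p)}(x)-f^{(p)}(y)$ is real-analytic; the key lemma is that it is not identically zero on $P$. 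Granting that, its zero set is a proper real-analytic subvariety, hence measure zero, and there are only finitely many pairs $(x,y)$, so for generic $p$ the map $f^{(p)}$ is injective. The same device handles Closure/Faithfulness: for each triple $x_1,x_2,x_3$ of distinct elements \emph{not} lying in a common combinatorial nuclear family, the right-angle condition (that $f(x_1),f(x_2),f(x_3)$ form a right triangle, say with the right angle at $x_1$) is the vanishing of the real-analytic function $\Re\big[(f(x_2)-f(x_1))\overline{(f(x_3)-f(x_1))}\big]$ together with the three cyclic variants; I would show that at least one of these does not vanish identically in $p$, so again the bad set is measure zero. Intersecting the finitely many generic conditions leaves a positive-measure set of good parameters.

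The main obstacle — and the step I would spend the most care on — is verifying these ``not identically zero'' claims, i.e.\ exhibiting, for each relevant pair or triple, \emph{some} parameter value at which the relevant polynomial is nonzero. The danger is a hidden rigidity forced by the rectangle recursion: a priori, two distinct tuples could be constrained to land on the same point for \emph{every} choice of angles. I would rule this out by induction on generations. The recursion says $f_{j+1}(F_{1+i})$ and $f_{j+1}(F_0)$ are obtained from the two parents $f_j(F_1),f_j(F_i)$ by a rotation-by-$\theta(F)$ construction, and crucially the ``over-achiever'' $F_{1+i}$ and ``under-achiever'' $F_0$ are sent to \emph{opposite} sides of the midpoint, so the dependence on $\theta(F)$ is genuinely nonconstant whenever the two parents are distinct. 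Since distinct elements of $\Sigma_j$ either have distinct parent-pairs or sit in the same family with distinct spouses, one can always find a family $F$ whose angle $\theta(F)$ moves one of the two competing images while fixing the other (or moves them at different rates), which forces the difference polynomial to be nonconstant; a symbolic perturbation (differentiate in one well-chosen $\theta(F)$) makes this rigorous. Faithfulness is similar but one must also exclude ``accidental'' right angles formed \emph{across} families — here the peripheral-mode structure (tuples differing only in coordinates $z_k$ with $k$ far from $j$) is used to move one vertex transversally to the plane of the other two.

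Finally, once a good real parameter $p$ is fixed, the resulting $f^{(p)}$ has image in $\C$ but not necessarily in $\Z[i]$, and the good properties (injectivity, absence of extra right triangles) are \emph{open} conditions since they were cut out by finitely many strict real-analytic inequalities evaluated at a finite set of points. I would therefore replace $\rad$ by a large integer multiple, round $f_1$ to the nearest Gaussian integers and round each $e^{i\theta(F)}$ to a nearby rational rotation so that the recursion keeps $f$ in $\tfrac{1}{2^N}\Z[i]$, then clear denominators by a further factor of $2^N$ into $\rad$; provided $\rad$ is large enough, the rounding error is a small fraction of the gaps between the finitely many relevant quantities, so injectivity, Magnitude ($C(N)^{-1}\rad\le|f(x)|\le C(N)\rad$), and Closure/Faithfulness all survive, and the norm-explosion inequality $\sum_{n\in f(\Sigma_{N-2})}|n|^{2s}>\tfrac12 2^{(s-1)(N-5)}\sum_{n\in f(\Sigma_3)}|n|^{2s}$ follows from the model computation with room to spare (the factor $\tfrac12$ absorbs both the perturbation and the rounding). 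This completes the construction, and feeding the resulting $\Lambda:=f(\Sigma)$, $\Lambda_j:=f(\Sigma_j)$ back through Properties $\mathrm{II}_\Lambda$–$\mathrm{VI}_\Lambda$ (which are exactly the combinatorial-nuclear-family axioms verified above, now transported to $\Z^2$) yields Proposition~\ref{frequencies}.
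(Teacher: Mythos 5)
Your overall route coincides with the paper's: use the model example (angles $\pi/2$, first generation placed at $\{\pm\rad,\pm i\rad\}$) as the witness for norm explosion, treat $f_1$ and the angles $\theta(F)$ as free parameters, argue that failure of injectivity/non-vanishing/faithfulness is confined to a null set by exploiting the freedom of the angles generation by generation, note that the surviving conditions are open, and finally restore integrality and the magnitude bound by density of complex-rational data together with an integer dilation. Two points, however, need repair.

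First, the faithfulness step is exactly where the real work lies, and your mechanism for it does not engage with the hard case. Fix distinct $x,y,z$ not all in one nuclear family, with $x$ in the latest generation, belonging to the family $F=\{x,x',p,p'\}$; the only new freedom is $\theta(F)$, which moves $f(x)$ (and $f(x')$, diametrically opposite) around the circle $C$ with diameter $f(p)f(p')$. If the sibling $x'$ is one of $\{y,z\}$, the dangerous configuration is a right angle at the remaining point $w$, and that condition is simply $f(w)\in C$ --- it is \emph{independent} of $\theta(F)$, so no amount of wiggling that angle proves non-vanishing. One must show that, generically, no previously placed frequency other than $f(x),f(x'),f(p),f(p')$ lies on $C$; the paper does this by a separate induction (a point of an earlier generation on $C$ would form a right triangle with $f(p),f(p')$ by Thales, contradicting inductive faithfulness, while a same-generation point lives on a different parents' circle and misses $C$ for almost every choice of its own angle). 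Your appeal to ``peripheral-mode structure'' and moving a vertex ``transversally to the plane of the other two'' has no meaning in this two-dimensional picture and indicates this case was not isolated. Second, the integrality rounding as written fails: the only unit-modulus elements of $\Z[i]$ are $\pm 1,\pm i$, so you cannot round $e^{i\theta(F)}$ so that the recursion stays in $2^{-N}\Z[i]$ without forcing all angles back to multiples of $\pi/2$, i.e.\ back to the degenerate model. The correct move (as in the paper) is to perturb to Pythagorean phases $e^{i\theta(F)}\in\Q[i]$, which are dense in the circle, take $f_1$ with values in $\Q[i]$, and then dilate by the least common denominator --- an uncontrolled but finite integer, which is harmless because all the good conditions are open and the magnitude constant $C(N)$ absorbs the dilation.
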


Providing this theorem is true, we see that the frequency set $\Lambda := f(\Sigma)$, with generations $\Lambda_j := f(\Sigma_j)$, obey
all the required properties described in Section \ref{overview} above (See e.g. the statement of Proposition \ref{frequencies}.)

\begin{proof} To prove this theorem, we begin by a number of reductions which allow us to remove several of the constraints required on the
placement function.

First we remove the requirement that the construction work for $\rad$ sufficiently large, by observing that as soon as we obtain an example involving a single $\rad=\rad_0$, we also get examples for any integer multiple $\rad = k\rad_0$ of $\rad_0$ simply by multiplying $f_1$ (and hence $f$) by $k$.  Since $\rad$
only appears in the magnitude hypothesis, we also get the same claim for any $\rad > \rad_0$ since any such $\rad$ is comparable up to a factor of two to
an integer multiple of $\rad$.

Thus we only need to exhibit an example for a single $\rad = \rad(N)$.  But then the magnitude hypothesis would follow from the weaker hypothesis

\begin{itemize}
\item (Non-zero) $f(x) \neq 0$ for all $x \in \Sigma$.
\end{itemize}

and we can now drop the magnitude hypothesis.

Next, we can weaken the integrality hypothesis $f(\Sigma) \subset
\Z[i]$ to a rationality hypothesis
$f(\Sigma) \subset \Q [i]$, since by
dilating $f_1$ (and hence $f$) by a suitable large integer (the least common denominator of all the coefficients of $f(\Sigma)$).

Now observe that in order for $f(\Sigma)$ to lie in the complex rationals $\Q[i]$, it would suffice (by an easy induction argument)
for the initial placement function $f_1$
to take values in $\Q[i]$, and for all the phases $\theta(F)$ to be \emph{Pythagorean}, i.e. $e^{i \theta(F)} \in \Q[i]$.  (A typical example of
a Pythagorean phase is $\tan^{-1} \frac{3}{5}$).  So we may replace the rationality hypothesis by the hypothesis that
the initial placement function is complex-rational and all the phases are Pythagorean.

However, the complex rationals $\Q[i]$ are dense in $\C$, and the Pythagorean phases are dense in $\R/2\pi\Z$ (indeed they form an infinite
subgroup of this unit circle).  Thus by a perturbation argument we may dispense with these conditions altogether.  Note that
the remaining conditions required are all open conditions, in that they are stable under sufficiently small perturbations of the
parameters.  Thus any non-rational solution to the above problem can be perturbed into a rational one.

We have now eliminated the integrality and magnitude hypotheses.  We now work on eliminating the non-zero, injectivity, and closure/faithfulness
hypotheses.  The point is that in the parameter space (the space of initial placement functions $f_1$ and phases $\theta(F)$), the set of
solutions to the norm explosion property is certainly an open set, so it is either empty or has positive measure.
So it suffices to show that the sets where
each of the non-zero, injectivity, and closure/faithfulness fail is a measure zero set, and thus they have no impact as to whether a solution actually exists or not.

Let's begin with the injectivity condition.  We need to show that for each distinct $x, y \in \Sigma$, that we have $f(x) \neq f(y)$ for almost
every choice of parameters $f_1$, $\theta$. Without loss of generality we may take $x \in \Sigma_j$ and $y \in \Sigma_{j'}$ where $j \geq j'$.
We induct on $j$.  When $j=1$ the claim is clear since in that case $f(x) = f_1(x)$ and $f(y) = f_1(y)$, and since there are absolutely no
constraints on $f_1$ we see that $f_1(x) \neq f_1(y)$ for almost every choice of $f_1$  (the angles $\theta(F)$ are not relevant in this case).

Now suppose $j > 1$.  Since $x \in \Sigma_j$, there is a unique
combinatorial family $F := \{ x,x',p,p'\}$ where $p,p' \in \Sigma_{j-1}$ are the parents of $x$ and $x'$ is the sibling of $x$.
By induction hypothesis, we have $f(p) \neq f(p')$ for almost every choice of parameters.  Now note that $f(x)$ lies on the
circle with diameter $f(p)$, $f(p')$, with the location on this circle determined by the angle $\theta(F)$.  This angle $\theta(F)$ is a free
parameter and will not influence the value of $f(y)$, unless $y$ is equal to the sibling $x'$ of $x$.  But in the latter case $f(y) = f(x')$
will be diametrically opposite to $f(x)$ and thus unequal to $f(x)$ (since $f(p) \neq f(p')$).  In all other cases we see that for almost
every choice of $\theta(F)$, $f(x)$ will not be equal to $f(y)$, simply because a randomly chosen point on a circle will almost surely be unequal
to any fixed point.  This establishes injectivity almost everywhere.

A similar argument establishes the non-zero property almost everywhere: if $x$ is in the first generation $\Sigma_1$ then it is clear
that $f(x) = f_1(x) \neq 0$ for almost every $f_1$, and for $x$ in any later generation, $f(x)$ again 
ranges freely on the circle generated
by its two parents $f(p)$, $f(p')$ (which are distinct for almost every choice of parameters), determined by some angle parameter $\theta(F)$
and thus will be non-zero for almost all choices of $\theta(F)$.

Now we establish closure and faithfulness.  Suppose $x \in \Sigma_{j_x}, y \in \Sigma_{j_y}, z \in \Sigma_{j_z}$ with $j_x \geq j_y \geq j_z$
are distinct and
do not all belong to the same combinatorial nuclear family.  We need to show that $f(x), f(y), f(z)$ do not form a right-angled triangle (with either $f(x)$, $f(y)$, or $f(z)$ being the right angle) for almost all choice of parameters.

We induct on $j_x$.  When $j_x = 1$, we have $f(x) = f_1(x)$, $f(y) = f_1(y)$, $f(z) = f_1(z)$ and since $f_1$ is completely unconstrained it is clear
that these three points will not form a right-angled triangle for almost all choices of parameters.  Now we assume inductively that
$j_x > 1$ and that the claim has already been proven for smaller values of $j_x$.

As before, $x$ belongs to a family $F = \{x,x',p,p'\}$ consisting of $x$, its sibling, and its parents, 
ranges freely on the circle $C$ with diameter $f(p)$, $f(p')$ generated by its two parents, with the position on this circle determined by the angle $\theta(F)$; recall that
$f(p) \neq f(p')$ for almost all choices of parameters.  This angle $\theta(F)$
will not influence either of $f(y)$ or $f(z)$ unless $y$ or $z$ is equal to the sibling $x'$, in which case $f(x')$ is the point diametrically
opposite to $f(x)$.  To proceed further, we next claim that the circle $C$ contains no elements of $\Lambda_1 \cup \ldots \cup \Lambda_{j_x}$
other than $f(x), f(x'), f(p), f(p')$.  To see this for the first $j_x-1$ generations $\Lambda_1 \cup \ldots \cup \Lambda_{j_x-1}$ follows
from the induction hypothesis, because if $C$ contained a point $f(u)$ from one of those generations then $f(p)$, $f(p')$, $f(u)$ would
be a right-angled triangle, contradicting the induction hypothesis for almost all choices of parameters.  To see the claim for
the last generation $\Lambda_{j_x}$, we note that any point $f(v)$ from that generation ranges on another circle $C'$ spanned by the parents
$q, q'$ of $v$; since $f(q)$ and $f(q')$ do not lie on $C$ we know that $C'$ is not co-incident to $C$, and so for almost all choices of the angle
parameter determining $f(v)$ from $f(q)$ and $f(q')$ we know that $f(v)$ does not lie on $C$.

To summarize, we know that $f(x)$ varies freely on the circle $C$ as determined by the angle $\theta(F)$, that $f(x')$ is diametrically opposite
to $f(x)$, that $f(p)$, $f(p')$ are also
diametrically opposite points on $C$, and all other points in $\Lambda_1 \cup \ldots \cup \Lambda_{j_x}$ lie outside of $C$
and are not affected by $\theta(F)$ (for almost all choices of parameters).  Since $\{x,y,z\} \not \subset F =  \{x,x',p,p'\}$ by hypothesis,
this shows that for almost all choices of $\theta(F)$, $f(x)$, $f(y)$, $f(z)$ do not form a right-angled triangle.

From the preceding discussion we see that all we need to do now is establish a single example of parameters which exhibits the norm
explosion property.  But this was already achieved by the model example, and we are done.

\end{proof}

\section{Appendix: Cubic NLS on $\T^2$ has No Asymptotically Linear Solutions}

\label{appendix}

Consider $H^1$ solutions $u: \R \times \T^2 \to \C$ to  the defocusing periodic NLS
\begin{equation}\label{nlsappendix}
-iu_t + \Delta u = |u|^2 u
\end{equation}
on the torus $\T^2$.  We do not expect such solutions to scatter to a free solution $e^{-it\Delta} u_+$.  Indeed, the explicit solutions
to \eqref{nlsappendix}
\begin{equation}\label{explicit}
u(t,x) = A e^{i\kappa} e^{i n \cdot x} e^{i|n|^2 t} e^{i|A|^2 t}
\end{equation}
for $A \geq 0$, $\kappa \in \R/2\pi\Z$, $n \in \Z^2$ do not converge to a free solution,
due to the presence of the phase rotation $e^{i|A|^2 t}$ caused by the nonlinearity.
One may still hope that a solution to \eqref{nlsappendix} scatters ``modulo phase rotation''.
However, the following result shows that this is only the case for the
explicit solution \eqref{explicit} mentioned above.

\begin{theorem}[No non-trivial scattering modulo phase rotation]\label{noscat}  Let $u: \R \times \T^2 \to \C$ be an $H^1$ solution to
\eqref{nlsappendix} which \emph{scatters modulo phases in $H^1$} in the sense that there exists $u_+ \in H^1(\T^2)$ and a function $\theta: \R \to \R/2\pi\Z$ such that
$$ \| u(t) - e^{i\theta(t)} e^{-it\Delta} u_+ \|_{H^1(\R^2)} \to 0$$
as $t \to +\infty$.  Then $u$ is of the form \eqref{explicit} for some $A \geq 0$, $\kappa \in \R/2\pi\Z$, $n \in \Z^2$.
\end{theorem}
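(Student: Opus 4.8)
\emph{Proof proposal.} The plan is to reduce first---by the conservation laws and an almost-periodicity argument---to the case where $\widehat{u_+}$ is supported on a single Fourier mode, and then to identify $u$ with an explicit solution \eqref{explicit}. Since $e^{-it\Delta}$ is an isometry on every $H^s(\T^2)$ and a phase $e^{i\theta(t)}$ does not change Sobolev norms, the hypothesis forces $\|u(t)\|_{\dot H^1}\to\|u_+\|_{\dot H^1}$ and $|\hat u(t,n)|\to|\widehat{u_+}(n)|$ for each $n$; then conservation of \eqref{energyconservation} and \eqref{massconservation} gives that $\|u(t)\|_{L^4}^4=4E[u]-2\|u(t)\|_{\dot H^1}^2$ converges as $t\to\infty$, and hence (using $H^1(\T^2)\hookrightarrow L^4$) so does $\|e^{-it\Delta}u_+\|_{L^4}^4$. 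But $t\mapsto\|e^{-it\Delta}u_+\|_{L^4}^4=\||e^{-it\Delta}u_+|^2\|_{L^2}^2$ is a uniformly almost periodic function of $t$: it is an absolutely convergent trigonometric sum, the coefficients being summable because $u_+\in H^1(\T^2)\hookrightarrow\ell^{4/3}(\Z^2)$ (by H\"older, $\sum_n|\widehat{u_+}(n)|^{4/3}\le(\sum_n\langle n\rangle^2|\widehat{u_+}(n)|^2)^{2/3}(\sum_n\langle n\rangle^{-4})^{1/3}$) together with the Young inequality $\ell^{4/3}\ast\ell^{4/3}\subset\ell^2$. An almost periodic function with a limit at $+\infty$ is constant, so $\|e^{-it\Delta}u_+\|_{L^4}$ is constant; in particular $\lim_{t\to\infty}\|u(t)\|_{L^4}^4=\|u_+\|_{L^4}^4$ and therefore $E[u]=E[u_+]$.

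The heart of the proof is that $S:=\{n\in\Z^2:\widehat{u_+}(n)\neq0\}$ has at most one element (the case $S=\varnothing$ being the trivial solution). I would argue by contradiction, working with the ODE system $\mathcal F NLS$ \eqref{FNLS} for the Fourier coefficients $a_n(t)$ of $u$; the hypothesis reads $a_n(t)=e^{i\phi(t)}c_n+\rho_n(t)$, where $c_n:=\widehat{u_+}(n)$, $\|\rho(t)\|_{\ell^2}\to0$, and $\phi$ is a continuous phase. First one extracts structure from \eqref{FNLS} by time-averaging. Since $|a_n(t)|^2\to|c_n|^2$, and more generally any \emph{phase-balanced} monomial in the $a_n$ converges without an $e^{i\phi}$ factor (so $a_n(t)\overline{a_m(t)}\to c_n\overline{c_m}$), the time-averages of $\tfrac{d}{dt}|a_n|^2$ and of $\tfrac{d}{dt}(a_n\overline{a_m})$ vanish; evaluating these averages from \eqref{FNLS}, where only the resonant terms survive, yields the identities $\sum_{(n_1,n_2,n_3)\in\Gamma_{res}(n)}c_{n_1}\overline{c_{n_2}}c_{n_3}=(\mu+|c_n|^2)c_n$ for all $n$ and some real $\mu$---that is, $\{c_n\}$ is, up to the rotation $e^{i\mu t}$, an equilibrium of the resonant system \eqref{RFNLS}---and they also show $\phi(t)/t\to\mu$. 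Now assume $|S|\ge2$, fix a linear functional $\ell$ on $\R^2$ that is injective on $S$, and let $p,q\in S$ be the points of $S$ maximizing and minimizing $\ell$. Then $m:=2p-q\notin S$, and the only triple $(n_1,n_2,n_3)$ with $n_1-n_2+n_3=m$ and all entries in $S$ is $(p,q,p)$, for which $\omega_4=-2|p-q|^2\neq0$. Hence \eqref{FNLS} for the (vanishing) coefficient $a_m$ becomes $-i\partial_t a_m=c_p^2\overline{c_q}\,e^{i\phi(t)}e^{-2i|p-q|^2t}+O(\|\rho(t)\|_{\ell^\infty})$ while $a_m(t)\to0$.

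The main obstacle is to turn this into a contradiction: the error $O(\|\rho(t)\|_{\ell^\infty})$ need not be integrable in $t$, so one cannot simply integrate the ODE and balance terms. Instead one must exploit the genuine oscillation of the driving term $e^{i(\phi(t)-2|p-q|^2t)}$---using $\phi(t)/t\to\mu$ together with van der Corput / integration-by-parts estimates---to show that no solution of that equation can tend to $0$, contradicting $a_m(t)\to0$. (Equivalently one may try to arrange the extremal modes, or re-choose the gauge/phase, so that the relevant frequency is exactly $\mu-2|p-q|^2$, handling the non-resonant case by the same oscillatory estimates.) This establishes $|S|\le1$.

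Finally, suppose $S=\{n_0\}$. The integer Galilean boost $u(t,x)\mapsto e^{i(n_0\cdot x+|n_0|^2t)}u(t,x+2n_0t)$ is well defined on $\T^2$ (since $n_0\in\Z^2$), preserves \eqref{nlsappendix} and $H^1$-convergence, and maps the mode $n_0$ to the mode $0$; so we may assume $n_0=0$, i.e.\ $u_+=c$ is a constant. Then $u(t)-e^{i\theta(t)}c\to0$ in $H^1$, hence $\|u(t)\|_{\dot H^1}\to0$, while the first paragraph (applied to this boosted solution) gives $E[u]=E[u_+]=\tfrac14\|u_+\|_{L^4}^4=\tfrac{1}{4|\T^2|}\|u_+\|_{L^2}^4$. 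By conservation of energy and mass, $\tfrac14\|u(t)\|_{L^4}^4=E[u]-\tfrac12\|u(t)\|_{\dot H^1}^2\le\tfrac{1}{4|\T^2|}\|u(t)\|_{L^2}^4$ for all $t$; since Cauchy--Schwarz gives the opposite inequality $\|u(t)\|_{L^4}^4\ge\tfrac{1}{|\T^2|}\|u(t)\|_{L^2}^4$, equality must hold, which forces $|u(t,\cdot)|$ to be constant in $x$ and $\|u(t)\|_{\dot H^1}=0$. Thus $u(t,x)=c(t)$ is independent of $x$, and \eqref{nlsappendix} collapses to $-ic'(t)=|c(t)|^2c(t)$, so $c(t)=c(0)e^{i|c(0)|^2t}$. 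Undoing the Galilean boost exhibits $u$ in the form \eqref{explicit}, which completes the proof.
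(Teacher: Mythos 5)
Your opening and closing steps are fine: the almost-periodicity argument giving $E[u]=E[u_+]$ is correct (though, once you have boosted so that $u_+$ is constant, it is not really needed, since then $e^{-it\Delta}u_+=u_+$ and $\|u(t)\|_{L^4}\to\|u_+\|_{L^4}$ follows directly from $H^1\hookrightarrow L^4$), and your endgame via mass/energy conservation plus H\"older is essentially the paper's own final step. The proof, however, stands or falls on the middle claim that $S=\{n:\widehat{u_+}(n)\neq 0\}$ has at most one element, and there the argument has two genuine gaps. First, the extremal-mode device presupposes that $S$ contains $\ell$-maximal and $\ell$-minimal elements, i.e.\ in effect that $S$ is finite or at least bounded in some direction. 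But $u_+$ is only assumed to lie in $H^1(\T^2)$, so $S$ may be infinite --- even all of $\Z^2$ --- in which case there need be no frequency $m\notin S$ at all, let alone one reachable by a unique triple $(p,q,p)$; the corner-mode construction then has nothing to act on, and nothing earlier in your argument reduces to finitely supported $u_+$. (The time-averaged identities $R_n=(\mu+|c_n|^2)c_n$ cannot do it either: two modes $p,q$ forming no rectangle and with $|c_p|=|c_q|$ already satisfy them, so they do not by themselves exclude $|S|\geq 2$.)

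Second, even granting $S$ finite, the contradiction is exactly the step you leave unproved, and the tool you invoke does not apply as stated: $\theta$ (hence $\phi(t)=\theta(t)+Gt$) is an arbitrary function with no smoothness, monotonicity, or derivative bounds, so van der Corput/integration by parts cannot be run on $e^{i(\phi(t)-2|p-q|^2t)}$, and $\phi(t)/t\to\mu$ is far too weak to certify ``genuine oscillation''; your parenthetical about the case $\mu=2|p-q|^2$ does not supply an argument either. This step can in fact be rescued without any phase information by demodulating with a nonvanishing mode: with $\beta:=2|p-q|^2$, your reduced equation gives $\frac{d}{dt}\bigl(a_m\overline{a_p}e^{i\beta t}\bigr)=i|a_p|^2a_p\overline{a_q}+o(1)\to i|c_p|^2c_p\overline{c_q}\neq 0$, so integrating over $[T,2T]$ the right side grows like $|c_p|^3|c_q|\,T$ while the left side is $o(1)$ because $a_m\to 0$ and $a_p$, $\partial_t a_p$ stay bounded --- a contradiction; but even with this repair the infinite-support obstruction above remains. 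By contrast, the paper avoids Fourier-support analysis entirely: compactness of $\{e^{i\theta}e^{-it\Delta}u_+\}$ in $H^1$ together with local well-posedness upgrades the asymptotic relation to the exact identity $v(t)=e^{i\alpha(t)}e^{-it\Delta}v_+$ for a genuine solution $v$, and then the pointwise rigidity $|v(t,x)|^2=\alpha'(t)$ on $\{v\neq 0\}$, combined with the diamagnetic inequality and the fact that $H^1$ contains no two-valued functions, forces $v_+$ to be a single plane wave, with no finiteness assumption needed. As it stands, your proposal has a genuine gap at its central step.
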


To prove this theorem, we first need some harmonic analysis lemmas.

\begin{lemma}[Compactness]\label{precomp}  For any $u_+ \in H^1(\T^2)$, the 
set $\{ e^{i\theta} e^{-it\Delta} u_+: \theta \in \R/2\pi\Z, t \in \R \}$ is precompact (i.e. totally bounded) in $H^1$.
\end{lemma}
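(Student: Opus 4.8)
The plan is to exploit the fact that both the phase rotation $u \mapsto e^{i\theta}u$ and the free evolution $u \mapsto e^{-it\Delta}u$ act on $H^1(\T^2)$ as Fourier multipliers of modulus one. Writing $u_+ = \sum_{n\in\Z^2} \hat u_+(n) e^{in\cdot x}$, the element $e^{i\theta}e^{-it\Delta}u_+$ has Fourier coefficients $e^{i\theta}e^{it|n|^2}\hat u_+(n)$, so in particular $\|e^{i\theta}e^{-it\Delta}u_+\|_{H^1(\T^2)} = \|u_+\|_{H^1(\T^2)}$ for every $\theta \in \R/2\pi\Z$ and $t \in \R$. Thus the set $\mathcal{K} := \{ e^{i\theta}e^{-it\Delta}u_+ : \theta \in \R/2\pi\Z, \, t \in \R \}$ is bounded in $H^1$, and since $H^1(\T^2)$ is complete it remains only to show that $\mathcal{K}$ is totally bounded.

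First I would fix $\epsilon > 0$ and, using $u_+ \in H^1(\T^2)$, choose a finite set $F \subset \Z^2$ such that the high-frequency tail satisfies $\sum_{n \notin F} \langle n\rangle^2 |\hat u_+(n)|^2 < \epsilon^2$. Let $P_F$ denote the Fourier projection onto frequencies in $F$ and $P_F^\perp := 1 - P_F$. Because $P_F$ commutes with both $e^{i\theta}$ and $e^{-it\Delta}$, we have $P_F^\perp(e^{i\theta}e^{-it\Delta}u_+) = e^{i\theta}e^{-it\Delta}(P_F^\perp u_+)$, hence $\|P_F^\perp(e^{i\theta}e^{-it\Delta}u_+)\|_{H^1} = \|P_F^\perp u_+\|_{H^1} < \epsilon$ \emph{uniformly} in $\theta$ and $t$. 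This is the only point at which uniformity over the two-parameter family is needed, and it is immediate from the unit-modulus property.

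Next, the low-frequency part $P_F(e^{i\theta}e^{-it\Delta}u_+)$ lies in the finite-dimensional subspace $V_F := \operatorname{span}\{ e^{in\cdot x} : n \in F\}$ of $H^1(\T^2)$ and has $H^1$ norm at most $\|u_+\|_{H^1}$. A bounded subset of the finite-dimensional normed space $V_F$ is precompact, hence totally bounded, so we may cover $\{ P_F(e^{i\theta}e^{-it\Delta}u_+) : \theta, t\}$ by finitely many $H^1$-balls $B_{H^1}(w_1,\epsilon), \ldots, B_{H^1}(w_M,\epsilon)$ with $w_k \in V_F$. Adding back the uniformly $\epsilon$-small high-frequency part, every element of $\mathcal{K}$ lies within $2\epsilon$ in $H^1$ of some $w_k$. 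Thus $\mathcal{K}$ is covered by finitely many $2\epsilon$-balls; since $\epsilon > 0$ was arbitrary, $\mathcal{K}$ is totally bounded, and therefore precompact in $H^1(\T^2)$.

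As for the main obstacle: there is essentially none of substance here, since this is a soft compactness statement and the argument above is the standard ``tightness in frequency plus finite-dimensional approximation'' scheme. The one point that genuinely uses the structure of the problem rather than general nonsense is the observation that the phase rotations and the free flow are $H^1$-isometries commuting with the frequency projection $P_F$; this is exactly what upgrades boundedness of $\mathcal{K}$ to total boundedness. (Note also that, in contrast to the analogous statement on $\R^2$, no spatial localization is required: on the torus the frequencies already live in the discrete set $\Z^2$, so truncating in frequency alone suffices.)
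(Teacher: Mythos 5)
Your proof is correct and follows essentially the same route as the paper's: pass to Fourier coefficients, note the multipliers $e^{i\theta}e^{it|n|^2}$ are unimodular so the high-frequency tail is uniformly small, and cover the remaining bounded finite-dimensional piece by finitely many balls. The paper's version is merely more compressed.
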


\begin{proof} By taking Fourier transforms, we see that it suffices to show that the set $\{ e^{i\theta} e^{it|\cdot|^2} \widehat{u_+}: \theta \in \R/2\pi\Z, t \in \R \}$ is precompact in $l^2(\langle n\rangle^2\ dn)$.  But by monotone convergence, for every $\eps > 0$ there exists $ R> 0$ such that
$$ \sum_{n \in \Z^2: |n| \geq R} \langle n \rangle^2 |\widehat{u_+}(n)|^2 \leq \eps$$
from which we easily conclude that the set $\{ e^{i\theta} e^{it|\cdot|^2} \widehat{u_+}: \theta \in \R/2\pi\Z, t \in \R \}$ is covered by finitely many balls of radius $O(\eps)$, and the claim follows.
\end{proof}

\begin{lemma}[Diamagnetic inequality]\label{dia}  If $u \in H^1(\T^2)$, then $|u| \in H^1(\T^2)$ and $\| |u| \|_{H^1(\T^2)} \leq \| u \|_{H^1(\T^2)}$.
\end{lemma}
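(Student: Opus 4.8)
The plan is to establish the pointwise gradient bound $|\nabla |u|| \le |\nabla u|$ almost everywhere (interpreted via weak derivatives) and then combine it with the trivial identity $\| |u| \|_{L^2(\T^2)} = \| u \|_{L^2(\T^2)}$. Since on $\T^2$ one has $\| f \|_{H^1}^2 = \| f \|_{L^2}^2 + \| \nabla f \|_{L^2}^2$ (this is exactly $\sum_n \langle n \rangle^2 |\hat f(n)|^2$), the two facts together will give $\| |u| \|_{H^1} \le \| u \|_{H^1}$.

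The obstruction to differentiating $|u|$ directly is the set where $u$ vanishes, so I would regularize. For $\epsilon > 0$ set
$$ v_\epsilon := \sqrt{|u|^2 + \epsilon^2} = \sqrt{(\Re u)^2 + (\Im u)^2 + \epsilon^2}. $$
The map $(a,b) \mapsto \sqrt{a^2 + b^2 + \epsilon^2}$ is smooth with bounded first derivatives, so by the chain rule for Sobolev functions composed with such maps, $v_\epsilon \in H^1(\T^2)$ with
$$ \nabla v_\epsilon = \frac{(\Re u)\nabla(\Re u) + (\Im u)\nabla(\Im u)}{\sqrt{|u|^2 + \epsilon^2}} = \frac{\Re(\overline{u}\, \nabla u)}{\sqrt{|u|^2 + \epsilon^2}} $$
almost everywhere. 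By Cauchy--Schwarz, $|\Re(\overline{u}\,\nabla u)| \le |u|\,|\nabla u| \le \sqrt{|u|^2+\epsilon^2}\,|\nabla u|$ pointwise, hence $|\nabla v_\epsilon| \le |\nabla u|$ a.e., uniformly in $\epsilon$.

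Now I would let $\epsilon \to 0$. Since $0 \le v_\epsilon - |u| \le \epsilon$ pointwise, $v_\epsilon \to |u|$ in $L^2(\T^2)$, while $\{\nabla v_\epsilon\}_{\epsilon>0}$ is bounded in $L^2(\T^2)$ by $\|\nabla u\|_{L^2(\T^2)}$. Thus $\{v_\epsilon\}$ is bounded in $H^1(\T^2)$ and converges in $L^2$ to $|u|$; by weak compactness we conclude $|u| \in H^1(\T^2)$ and, along a subsequence, $\nabla v_\epsilon \rightharpoonup \nabla |u|$ weakly in $L^2$. Weak lower semicontinuity of the $L^2$ norm then gives $\|\nabla |u|\|_{L^2} \le \liminf_{\epsilon \to 0} \|\nabla v_\epsilon\|_{L^2} \le \|\nabla u\|_{L^2}$. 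Combining with $\||u|\|_{L^2} = \|u\|_{L^2}$ yields
$$ \||u|\|_{H^1(\T^2)}^2 = \||u|\|_{L^2}^2 + \|\nabla |u|\|_{L^2}^2 \le \|u\|_{L^2}^2 + \|\nabla u\|_{L^2}^2 = \|u\|_{H^1(\T^2)}^2 $$
as claimed.

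The only input that is not entirely elementary is the chain rule for a smooth, bounded-gradient function composed with an $H^1$ function (equivalently, that this composition is continuous enough in $H^1$); this is classical and can be proved by approximating $u$ by smooth functions in $H^1(\T^2)$ and passing to the limit. I do not expect any real obstacle beyond invoking this standard fact; everything else is Cauchy--Schwarz and weak compactness.
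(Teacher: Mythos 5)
Your proof is correct and follows essentially the same route as the paper: regularize with $v_\eps=\sqrt{|u|^2+\eps^2}$, use the identity $\nabla v_\eps = \Re(\overline{u}\,\nabla u)/\sqrt{|u|^2+\eps^2}$ and Cauchy--Schwarz to get $|\nabla v_\eps|\le|\nabla u|$, then pass to the limit $\eps\to 0$. The only (cosmetic) difference is in how the limit is packaged: the paper first reduces to smooth $u$ and takes distributional limits (with Fatou), while you apply the chain rule directly to $u\in H^1$ and conclude by weak compactness and lower semicontinuity.
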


\begin{proof}  By a limiting argument (and Fatou's lemma) it suffices to verify this when $u$ is smooth.  Observe that for any $\eps > 0$ we have
\begin{align*}
2 |\sqrt{\eps^2 + |u|^2} \nabla \sqrt{\eps^2 + |u|^2}| &=
|\nabla (\eps^2 + |u|^2)| \\
&= 2 | \Re( \overline{u} \nabla u )| \\
&\leq 2 |u| |\nabla u|
\end{align*}
and hence
$$ |\nabla \sqrt{\eps^2+|u|^2}| \leq |\nabla u|.$$
(This can also be seen by observing that the map $u \mapsto \sqrt{\eps^2+|u|^2}$ is a contraction.)  Taking distributional limits as $\eps \to 0$ we obtain the claim.
\end{proof}

\begin{lemma}[$H^1$ has no step functions]\label{nostep}  Let $u \in H^1(\T^2)$ be such that $u(x)$ takes on at most two values.  Then $u$ is constant.
\end{lemma}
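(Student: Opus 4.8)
The plan is to reduce the statement to the elementary fact that the only characteristic functions lying in $H^1(\T^2)$ are, almost everywhere, the constants $0$ and $1$. Suppose for contradiction that $u$ is not constant; then it takes exactly two distinct values $a \neq b$ (in $\C$). Set $E := \{ x \in \T^2 : u(x) = a \}$, so that $u = b + (a-b)\chi_E$ almost everywhere, and hence $\chi_E = (u-b)/(a-b)$ is a \emph{real-valued} function in $H^1(\T^2)$ taking values in $\{0,1\}$.

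Next I would invoke the chain rule for Sobolev functions: writing $f := \chi_E$, since $f \in H^1(\T^2) \cap L^\infty(\T^2)$ the square $f^2$ also lies in $H^1(\T^2)$ and $\nabla(f^2) = 2 f \nabla f$. (This is standard: approximate $f$ in $H^1$ by smooth functions, use $f \in L^\infty$ to control the products, and pass to the limit.) But $f$ takes values in $\{0,1\}$, so $f^2 = f$ pointwise almost everywhere; comparing gradients gives $\nabla f = 2 f \nabla f$, i.e. $(1 - 2f)\nabla f = 0$ almost everywhere. Since $1 - 2f \in \{1,-1\}$ has absolute value $1$ at every point, this forces $\nabla f = 0$ almost everywhere.

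Finally, a function in $H^1(\T^2)$ with vanishing distributional gradient is almost everywhere constant (the torus is connected; equivalently, its nonzero Fourier modes all vanish). Hence $\chi_E$ equals a constant a.e., which must be $0$ or $1$, i.e. $|E| = 0$ or $|E| = |\T^2|$. Either way $u$ agrees almost everywhere with a single one of its two values, contradicting the hypothesis that it attains both; therefore $u$ is constant. I do not anticipate a genuine obstacle in this argument: the only step requiring a word of justification is the Sobolev chain rule $\nabla(f^2) = 2 f \nabla f$, and if one prefers, the same conclusion can be phrased via sets of finite perimeter (a set $E \subset \T^2$ with $\chi_E \in H^1$ has zero perimeter and is therefore null or co-null), but the elementary computation above is the most self-contained.
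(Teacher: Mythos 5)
Your proof is correct and follows essentially the same route as the paper: normalize to a $\{0,1\}$-valued function, use $f^2=f$ and the Sobolev product/chain rule (justified by $f\in H^1\cap L^\infty$) to get $(1-2f)\nabla f=0$, note $|1-2f|=1$, and conclude $\nabla f=0$, hence $f$ is a.e.\ constant. The only cosmetic difference is that you frame the final step as a contradiction, whereas the conclusion ``$u$ agrees a.e.\ with a single value'' is already the desired statement for an $H^1$ equivalence class.
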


\begin{proof} Without loss of generality we may assume that $u$ takes on the values $0$ and $1$ only, thus $u^2 = u$.  Differentiating this we obtain $2 u \nabla u = \nabla u$, thus $(1-2u) \nabla u = 0$.  But since $u^2 = u$, $(1-2u)^2 = 1$, and so $\nabla u = 0$, and so $u$ is constant as required.  (Note that all these computations can be justified in a distributional sense via Sobolev embedding since $u$ lies in both $H^1$ and $L^\infty$, and thus lies in $L^p(\T^2)$ for every $1 \leq p \leq \infty$.)
\end{proof}

\begin{proof}[Proof of Theorem \ref{noscat}]  Let $u, u_+, \theta$ be as above.  We may of course assume that $u$ has non-zero mass. From Lemma \ref{precomp} (and the continuity in $H^1$ of the solution $t \mapsto u(t)$) we see that the set $\{ u(t): 0 \leq t \leq +\infty \}$ is precompact in $H^1$.  Thus we can find a sequence $t_m \to \infty$ such that $u(t_m)$ converges in $H^1$ to some limit $v_0$.  
Applying Lemma \ref{precomp} and passing to a subsequence, we can also assume that $e^{-it_m\Delta} u_+$ 
converges in $H^1$ to a limit $v_+$.
Since $u$ has non-zero mass, we see on taking limits that $v_+$ also has non-zero mass.

Now let $v^{(m)}: \R \times \T^2 \to \C$ be the time-translated solutions $v^{(m)}(t) := u(t+t_m)$, thus $v^{(m)}(0)$ 
converges in $H^1$ to $v_0$.  Let $v: \R \times \T^2 \to \C$ be the solution to NLS with initial data $v(0)=v_0$.  
By the $H^1$ local well-posedness theory we conclude that $v^{(m)}$ converges uniformly in $H^1$ to $v$ on every compact 
time interval $[-T,T]$.  
On the other hand, by hypothesis, for every $t$ we have
$$ \| v^{(m)}(t) - e^{i\theta(t+t_m)} e^{-i(t+t_m)\Delta} u_+ \|_{H^1} \to 0$$
as $m \to \infty$.  Since $e^{-it_m\Delta} u_+$ converges to $v_+$, we conclude (since $e^{-it\Delta}$ is unitary on $H^1$)
 that
$$ \| v^{(m)}(t) - e^{i\theta(t+t_m)} e^{-i t\Delta} v_+ \|_{H^1} \to 0$$
On taking limits, we conclude that
\begin{equation}\label{vad}
 v(t,x) = e^{i\alpha(t)} e^{-it\Delta} v_+(x)
\end{equation}
for some $\alpha(t) \in \R/2\pi\Z$.  In particular, since $v_+$ has non-zero mass, we have the identity
$$e^{i\alpha(t)} = \frac{1}{\|v_+\|_{L^2}^2} \int_{\T^2} v(t) \overline{e^{-it\Delta} v_+}\ dx.$$
From \eqref{nlsappendix} and Sobolev embedding we see that $v(t)$ is continuously
differentiable in $H^{-1}(\T^2)$, and so from the above identity we see that $\alpha$ is continuously differentiable in time.

Now we apply $(-i\partial_t + \Delta)$ to both sides of \eqref{vad}.  Using the NLS equation, we conclude that
$$ |v(t,x)|^2 v(t,x) =  \alpha'(t) e^{i\alpha(t)} e^{-it\Delta} v_+(x)$$
and thus by \eqref{vad} we have
$$ |v(t,x)|^2 =  \alpha'(t)$$
whenever $v(t,x) \neq 0$.  Thus we see that for each time $t$, $|v(t,x)|$ takes on at most two values.  
By Lemma \ref{dia} and Lemma \ref{nostep}, we conclude that $|v(t,x)|$ is constant in $x$; by \eqref{vad}, we 
see that the same is true for $|e^{-it\Delta} v_+|$.  
By mass conservation we conclude that $|e^{-it\Delta} v_+|$ is also constant in time.  
Since $v_+$ has non-zero mass, we can thus write
$$ e^{-it\Delta} v_+ = A e^{i \omega(t,x)}$$
for some phase $\omega: \R \times \T^2 \to \R/2\pi\Z$ and some constant amplitude $A > 0$.  Since $v_+$ was in $H^1$, we see 
that $\omega$ is in $H^1$ also.  Applying $(-i\partial_t + \Delta)$ to both sides, we conclude that
$$ 0 = A e^{-i\omega} ( \omega_t + i \Delta \omega - |\nabla \omega|^2 )$$
in the sense of distributions.  Since $A$ is non-zero, we conclude that
$$
 \omega_t + i \Delta \omega - |\nabla \omega|^2  = 0.
$$
Taking imaginary parts we conclude that $\Delta \omega = 0$, and in particular at time $t=0$ $\omega$ is a 
harmonic map from $\T^2$ to $\R/2\pi\Z$.  Using monodromy we can lift $\omega$ to a harmonic function 
from $\R^2$ to $\R$ of at most linear growth, and thus (by Liouville's theorem) $\omega$ must in fact be linear.  
Descending back to $\T^2$, we conclude that $\omega(0,x) = n \cdot x + \beta$ for some $n \in \Z^2$ and $\beta 
\in \R/2\pi\Z$.  Thus we have
$$ v_+(x) = A e^{i \beta} e^{in \cdot x}.$$
Since $e^{it_m\Delta} u_+$ converges to $v_+$, $e^{-it_m \Delta} v_+$ converges to $u_+$.  
But $e^{-it_m \Delta} v_+$ is a multiple of $A e^{i n \cdot x}$ by a phase; thus we have
$$ u_+(x) = A e^{i\gamma} e^{in \cdot x}.$$
Applying phase rotation and Galilean symmetry (noting that the conclusion of the theorem is invariant under 
these symmetries) we may assume $\gamma=n=0$.  Thus $u_+ = A$, and we have
$$ \| u(t) - e^{i\theta(t)}e^{-it \Delta} A \|_{H^1(\T^2)} \to 0 \hbox{ as } t \to \infty.$$
From mass and energy conservation we conclude
$$ \int_{\T^2} |u(t,x)|^2\ dx = A^2 |\T^2|$$
and
$$ \int_{\T^2} \frac{1}{2} |\nabla u(t,x)|^2 + \frac{1}{4} |u(t,x)|^4 \ dx = \frac{1}{4} A^4 |\T^2|.$$
On the other hand, from H\"older's inequality we have
$$ \int_{\T^2} \frac{1}{4} |u(t,x)|^4 \ dx \geq \frac{1}{4 |\T^2|} (\int_{\T^2} |u(t,x)|^2\ dx)^2 = \frac{1}{4} A^4 |\T^2|.$$
Thus we must have
$$ \int_{\T^2} \frac{1}{2} |\nabla u(t,x)|^2\ dx = 0,$$
thus $u$ is constant in space, and thus is of the form $u(t,x) = A e^{i\kappa(t)}$.
Applying \eqref{nlsappendix} we see that $u(t,x) = A e^{i \kappa(0)} e^{i |A|^2 t}$, and the claim follows.
\end{proof}

\end{document}